\numberwithin{equation}{section}
\theoremstyle{plain}
\newtheorem{theorem}{Theorem}[section]
\newtheorem{lemma}[theorem]{Lemma}
\newtheorem{proposition}[theorem]{Proposition}
\newtheorem{corollary}[theorem]{Corollary}
\theoremstyle{remark}
\newtheorem*{remark}{Remark}
\theoremstyle{definition}
\newtheorem*{definition}{Definition}
\newcommand{\R}{\mathbb{R}}
\newcommand{\SF}{\mathbb{S}}
\begin{document}

\title{Asymptotic behavior and rigidity results for symmetric solutions of the elliptic system $\Delta u=W_u(u)$ \\}
%\thanks{}

\author{Nicholas D. Alikakos\footnote{\rm The first author was partially supported through the project PDEGE – Partial Differential Equations
Motivated by Geometric Evolution, co-financed by the European Union – European Social
Fund (ESF) and national resources, in the framework of the program Aristeia of the ‘Operational
Program Education and Lifelong Learning’ of the National Strategic Reference Framework
(NSRF).}
\footnote{The research of N. Alikakos has been co-financed by the European Union – European Social
Fund (ESF) and Greek national funds through the �� Operational Program
 Education and Lifelong Learning’ of the National Strategic Reference
Framework (NSRF) - Research Funding Program: THALES}\ \ \ and Giorgio Fusco}

\date{}
\maketitle

\begin{abstract}
We study symmetric vector minimizers of the Allen-Cahn energy and establish various results concerning their structure and their asymptotic behavior.
\end{abstract}
2010 {\it Mathematical Subject Classification} {Primary: 35J47, 35J50; Secondary: 35J20}
 \section{Introduction}
 The problem of describing the structure of bounded solutions $u:\Omega\rightarrow\R^m$ of the equation
\begin{eqnarray}\label{system-0}
\left\{\begin{array}{l}
\Delta u=f(u),\quad x\in\Omega \\
u=u_0,\quad x\in\partial\Omega,
\end{array}\right.
\end{eqnarray}
where $f:\R^m\rightarrow\R^m$ is a smooth map and $\Omega\subset\R^n$ is a smooth domain that can be bounded or unbounded and may also enjoy symmetry properties, is a difficult and important problem which has attracted the interest of many authors in the last twenty five  years see \cite{gnn}, \cite{bcn}, \cite{bcn1} and \cite{eh} just to mention a few. Questions concerning monotonicity, symmetry and asymptotic behavior are the main objectives of these investigations. Most of the existing literature concerns the scalar case $m=1$ where a systematic use of the maximum principle and its consequences are the main tools at hand. For the vector case $m\geq 2$ we mention the works \cite{bgs} and \cite{gs}  where the control of the asymptotic behavior of solutions was basic for proving existence. In this paper we are interested in the case where $f(u)=W_u(u)$ is the gradient of a potential $W:\R^m\rightarrow\R$ and $u$ is a minimizer for the action functional $\int\frac{1}{2}\vert\nabla v\vert^2+W(v)$ in the sense of the following
\begin{definition}\label{definition-stable}
A map $u\in C^2(\Omega;\R^m)\cap L^\infty(\Omega;\R^m)$, $\Omega\subset\R^n$ an open set, is said to be a \underline{{\it minimizer}} or \underline{{\it minimal}} if for each bounded open lipshitz set $\Omega^\prime\subset\Omega$ it results
\begin{eqnarray}
J_{\Omega^\prime}(u)=\min_{v\in W_0^{1,2}(\Omega^\prime;\R^m)} J_{\Omega^\prime}(u+v),\quad\quad
J_{\Omega^\prime}(v)=\int_{\Omega^\prime}\frac{1}{2}\vert\nabla v\vert^2+W(v),
\end{eqnarray}
that is $u|_{\Omega^\prime}$ is an absolute minimizers in the set of $W^{1,2}(\Omega^\prime;\R^m)$ maps which coincide with $u$ on $\partial\Omega^\prime$.
\end{definition}
 Clearly if $u:\Omega\rightarrow\R^m$ is minimal then it is a solution of the Euler-Lagrange equation associated to the functional $J_{\Omega^\prime}$ which is the vector Allen-Cahn equation
\begin{equation}\label{system}
\Delta u=W_u(u),\quad x\in\Omega.
\end{equation}
We will work in the context of reflection symmetries.
Our main results are Theorem \ref{main} on the asymptotic behavior of symmetric minimizers and Theorem \ref{main-1} and Theorem \ref{triple} on the {\it rigidity} of symmetric minimizers. Rigidity meaning that, under suitable assumptions, a symmetric minimizer $u:\R^n\rightarrow\R^m$ must in effect depend on a number of variables $k<n$ strictly less than the dimension $n$ of the domain space. These theorems, in the symmetric setting, are  vector counterparts of analogous results which are well known in the scalar case $m=1$ \cite{bar} \cite{far}. However in the vector case there is more structure as we explain after the statement of Theorem \ref{main-2}. In \cite{af3} we discuss a rigidity theorem where the assumption of symmetry is removed.

We let $G$ a reflection group acting both on the domain space $\Omega\subseteq\R^n$ and on the target space $\R^m$. We assume that  $W:\R^m\rightarrow\R$ a $C^3$ potential such that
\begin{description}
\item[${\bf H}_1$]$W$ is symmetric with respect to $G$: $W(g u)=W(u),\;\text{ for }\;g\in G,\; u\in\R^m$.
\end{description}
For Theorem \ref{main} and Theorem \ref{main-1} $G=S$ the group of order $2$ generated by the reflection $\R^d\ni z\mapsto\hat{z}\in\R^d$ in the plane $\{z_1=0\}$:
\[\hat{z}=(-z_1,z_2,\ldots,z_d),\;d=n,\,m.\]
In this case the symmetry of $W$ is expressed by $W(\hat{u})=W(u),\;u\in\R^m$. For Theorem \ref{triple} $G=T$ the group of order $6$ of the symmetries of the equilateral triangle. $T$ is generated by the reflection $\gamma$ in the plane $\{z_2=0\}$ and $\gamma_\pm$ in the plane $\{z_2=\pm\sqrt{3}z_1\}$.
We let $F\subset\R^d$, $d=n$ or $d=m$ a fundamental region for the action of $G$ on $\R^d$. If $G=S$ we take $F=\R_+^d=\{z:z_1>0\}$. If $G=T$ we take $F=\{z:0<z_2<\sqrt{3}z_1,\;z_1>0\}$.
\begin{description}
\item[${\bf H}_2$] There exists $a\in\overline{F}$ such that:
\begin{eqnarray}
0=W(a)\leq W(u),\; u\in\overline{F}.
\end{eqnarray}
Moreover $a$ is nondegenerate in the sense that the quadratic form $D^2W(a)(z,z)$ is positive definite.
\end{description}
%We study the asymptotic behavior of symmetric solutions of the elliptic system
%\begin{eqnarray}\label{system}
%\left\{\begin{array}{l}
%\Delta u=W_u(u),\quad x\in\Omega \\
%u=u_0,\quad x\in\partial\Omega\\
%u(x_1,\dots,x_n)=u(-x_1,\dots,x_n),
%\end{array}\right.
%\end{eqnarray}
%where $u_0$ is a given bounded symmetric map and $\Omega\subset\R^n$ is a bounded or unbounded  domain such that

In the symmetric setting we assume minimality in the class of symmetric variations:
\begin{definition}\label{definition-stable-s} Assume that $\Omega\subset\R^n$ and
 $u\in C^2(\Omega;\R^m)\cap L^\infty(\Omega;\R^m)$, are symmetric
 \begin{equation}\label{symmetric-equiv}
 \begin{split}
 &x\in\Omega\Rightarrow\;g x \in\Omega,\;\text{ for }\;g\in G,\\
 &u(g x )= g u(x),\;\text{ for }\;g\in G,\;x\in\Omega.
 \end{split}
 \end{equation} Then $u$ is said to be a symmetric  minimizer  if for each bounded open symmetric lipschitz set $\Omega^\prime\subset\Omega$ and for each symmetric $v\in W_0^{1,2}(\Omega^\prime;\R^m)$ it results
\begin{eqnarray}
J_{\Omega^\prime}(u)\leq J_{\Omega^\prime}(u+v).
\end{eqnarray}
\end{definition}
In the following by a minimizer we will always mean a symmetric minimizer in the sense of the definition above.

\begin{theorem}\label{theorem-1}
Assume $G=S$ and assume that $W$ satisfies ${\bf H}_1-{\bf H}_2$. Assume that $\Omega\subseteq\R^n$ is {\it convex-symmetric} in the sense that
\begin{eqnarray}
     x=(x_1,\dots,x_n)\in\Omega\Rightarrow(t x_1,\dots,x_n)\in\Omega, \text{ for } \vert t\vert\leq 1.
\end{eqnarray}
Let $\mathcal{Z}=\{z\in\R^m:z\neq a, W(z)=0\}$ and let
$u:\Omega\rightarrow\R^m$ a minimizer that satisfies
\begin{eqnarray}
\vert u(x)-z\vert>\delta,\;\text{ for }\;z\in\mathcal{Z},\; d(x,\partial\Omega^+)\geq d_0,\;x\in\Omega^+,
\end{eqnarray}
$\Omega^+=\{x\in\Omega:x_1>0\}$, and
\begin{equation}\label{assumed-bound}
\vert u\vert+\vert\nabla u\vert\leq M,\;\text{ for }\;x\in\Omega,
\end{equation}
for some $M>0$

 Then there exist $k_0, K_0>0$ such that
\begin{eqnarray}\label{exponential-0}
\vert u-a\vert\leq K_0e^{-k_0 d(x,\partial\Omega^+)},\;\text{ for }\;x\in\Omega^+.
\end{eqnarray}
\end{theorem}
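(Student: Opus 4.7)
\emph{Strategy.} The plan is to split the argument into three stages: a minimality-driven energy bound on large balls contained in $\Omega^+$, a qualitative statement $|u-a|\to 0$ as $d(x,\partial\Omega^+)\to\infty$, and the upgrade to exponential decay via the nondegeneracy of $a$. The first stage is routine: for $x_0 \in \Omega^+$ with $d(x_0,\partial\Omega^+)\geq r$ the ball $B_r(x_0)$ lies in $\Omega^+$ and is disjoint from the symmetry plane $\{x_1=0\}$, so any $W_0^{1,2}$-variation on $B_r(x_0)$ extends by reflection in $\{x_1=0\}$ to a symmetric variation supported on $B_r(x_0)\cup B_r(\hat{x}_0)$, and symmetric minimality (Definition~\ref{definition-stable-s}) reduces to ordinary minimality on $B_r(x_0)$. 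Using as competitor the map equal to $a$ on $B_{r-1}(x_0)$ and linearly interpolating to $u$ on $B_r(x_0)\setminus B_{r-1}(x_0)$, together with the pointwise bound \eqref{assumed-bound}, yields
\begin{equation*}
    J_{B_r(x_0)}(u) \;\leq\; C(M)\, r^{n-1}.
\end{equation*}

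\emph{Qualitative convergence.} Next I would prove $|u(x)-a|\to 0$ as $d(x,\partial\Omega^+)\to\infty$ by contradiction. Assume that $\epsilon_0>0$ and $x_k\in\Omega^+$ satisfy $d(x_k,\partial\Omega^+)\to\infty$ and $|u(x_k)-a|\geq\epsilon_0$. The translates $u_k(y):=u(y+x_k)$ are uniformly $C^1$-bounded, on any fixed ball eventually obey $\inf_{z\in\mathcal{Z}}|u_k(y)-z|\geq\delta$, and, along a subsequence, converge in $C^1_{\mathrm{loc}}(\R^n)$ to a bounded entire minimizer $u_\infty:\R^n\to\R^m$ with $|u_\infty(0)-a|\geq\epsilon_0$, $\inf_{z\in\mathcal{Z}}|u_\infty(y)-z|\geq\delta$ for all $y$, and the same $r^{n-1}$ energy bound. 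The desired contradiction is a Liouville-type statement: the only bounded entire minimizer of $J$ on $\R^n$ that avoids $\mathcal{Z}$ is the constant $u_\infty\equiv a$. The heuristic is that the non-$a$ zeros of $W$ are the only candidate ``end states'' for a nontrivial transition, so forbidding them leaves no room for non-constant behavior; rigorously one combines the $r^{n-1}$ bound, the conversion $W(u_\infty)\gtrsim|u_\infty-a|^2$ near $a$ coming from nondegeneracy, and a replacement argument in which a candidate ``bump'' of $u_\infty$ away from $a$ is compared with the competitor $a$ to strictly lower the energy.

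\emph{Exponential decay.} Once the qualitative step is in place, set $v:=|u-a|^2$. Since $W_u(a)=0$ and $D^2W(a)$ is positive definite, Taylor expansion yields $\epsilon_1, c_0>0$ with $(u-a)\cdot W_u(u)\geq c_0|u-a|^2$ whenever $|u-a|\leq\epsilon_1$, and hence, by \eqref{system},
\begin{equation*}
    \Delta v \;=\; 2|\nabla u|^2 + 2(u-a)\cdot W_u(u) \;\geq\; 2c_0\, v
\end{equation*}
on $\{|u-a|\leq\epsilon_1\}$, which by the previous stage contains $\Omega^+_{R_0}:=\{x\in\Omega^+:d(x,\partial\Omega^+)\geq R_0\}$ for some $R_0>0$. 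Fix such an $x$, set $D:=d(x,\partial\Omega^+_{R_0})$, and compare $v$ on $B_D(x)$ with a radial exponential supersolution of $\Delta\Phi - 2c_0\Phi\leq 0$ bounded by $\epsilon_1^2$ on the boundary; this yields $v(x)\leq K^2 e^{-2k_0 D}$, and absorbing the constant shift $R_0$ gives \eqref{exponential-0}. The main obstacle is the Liouville-type step in Stage 2: the first and third stages are routine (a competitor estimate and a classical barrier argument, respectively), whereas the vector-valued classification of bounded entire minimizers avoiding $\mathcal{Z}$, though morally transparent from the blow-down heuristic, is the technically demanding piece.
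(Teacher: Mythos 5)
The paper does not prove Theorem~\ref{theorem-1} from scratch: its entire proof is a one-sentence citation to Theorem~1.2 of \cite{fu}, so you are in effect attempting to reconstruct a result that the authors deliberately outsource. Your three-stage blueprint (energy bound via a competitor, qualitative $|u-a|\to0$ by compactness plus a Liouville theorem, exponential upgrade by a barrier) is a sensible skeleton, and Stages~1 and~3 are routine and correct as you state them --- including the reflection observation that turns symmetric minimality into ordinary minimality on balls $B_r(x_0)$ disjoint from $\{x_1=0\}$, and the computation $\Delta|u-a|^2\ge 2c_0|u-a|^2$ near $a$ feeding into a maximum-principle barrier.

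The genuine gap is precisely where you yourself flag it, but the ``replacement argument'' you sketch for the Liouville step cannot close it, and it is worth being explicit about why. If $u_\infty$ is the blow-down limit with $|u_\infty(0)-a|\ge\epsilon_0$, then replacing a bump of $u_\infty$ by the constant $a$ on $B_{r-1}$ and interpolating on $B_r\setminus B_{r-1}$ saves only an a priori $O(1)$ amount of energy while the interpolation costs $O(r^{n-1})$; this produces no contradiction with minimality, it merely re-derives your Stage~1 upper bound $J_{B_r}(u_\infty)\le Cr^{n-1}$. What is required is a \emph{lower} bound on $J_{B_r}(u_\infty)$ growing strictly faster than $r^{n-1}$. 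The standard mechanism is a Caffarelli--C\'ordoba type density estimate: if $|u_\infty(0)-a|\ge\epsilon_0$, then $\mathcal{H}^n\bigl(\{|u_\infty-a|\ge\epsilon_0/2\}\cap B_r\bigr)\gtrsim r^n$, which, combined with the positive lower bound for $W$ on the compact set $\{z:\ |z-a|\ge\epsilon_0/2,\ \inf_{z'\in\mathcal{Z}}|z-z'|\ge\delta,\ |z|\le M\}$ (this set contains no zeros of $W$, so compactness gives $W\ge c_*>0$ on it), forces $J_{B_r}(u_\infty)\gtrsim r^n$ and hence a contradiction for large $r$. A vector-valued density estimate of this kind, together with the avoidance of $\mathcal{Z}$, is the substantive content of Theorem~1.2 in \cite{fu}; as written, your proposal rests the entire Liouville step on a heuristic that does not supply the needed quantitative lower bound, and so the proof is incomplete at exactly the load-bearing point.
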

\begin{proof}
A minimizer $u$ satisfies the assumptions of Theorem $1.2$ in \cite{fu} that implies the result.
\end{proof}
Examples of minimizers that satisfy the hypothesis of Theorem \ref{theorem-1} are provided (see \cite{af2}) by the entire equivariant solutions of (\ref{system}) constructed in \cite{af1}, \cite{a1}, \cite{f}.
The gradient bound in (\ref{assumed-bound}) is a consequence of the smoothness of $\Omega$ or, as in the case of the entire solutions referred to above, follows from the fact that $u$ is the restriction to a non smooth set of a smooth map.
%From the assumption that $u$ is bounded and elliptic regularity it follows that we can assume
%\begin{eqnarray}
%u\in C^{2,\alpha}(\Omega,\R^m)\cap C^{0,1}(\overline{\Omega},\R^m)&&\\\nonumber
%\|u\|_{C^{2,\alpha}(\Omega,\R^m)}\leq M,\quad\quad &&
%\end{eqnarray}
%for some constant $M>0$.

We denote $C_S^{0,1}(\overline{\Omega},\R^m)$ the set of lipschitz symmetric maps $v:\overline{\Omega}\rightarrow\R^m$ that satisfy the bounds
\begin{equation}\label{bounds}
\begin{split}
&\|v\|_{C^{0,1}(\overline{\Omega},\R^m)}\leq M,\\
&\vert v-a\vert +\vert\nabla v\vert\leq K_0e^{-k_0 d(x,\partial\Omega^+)},\;x\in\Omega^+.
\end{split}
\end{equation}
We remark that from (\ref{exponential-0}) and elliptic regularity, after redefining $k_0$ and $K_0$ if necessary, we have
\begin{equation}\label{gradu-expo}
u\in C_S^{0,1}(\overline{\Omega},\R^m),
\end{equation}
for the minimizer in Theorem \ref{theorem-1}.

\begin{theorem}\label{main}
Assume $W$, $\Omega$ and $u:\Omega\rightarrow\R^m$ as in Theorem \ref{theorem-1}. Assume moreover that
\begin{description}
\item[${\bf H}_3$] The problem
\begin{eqnarray}
\left\{\begin{array}{l}
 u^{\prime\prime}=W_u(u),\quad s\in\R \\
 u(-s)=\hat{u}(s),\;s\in\R,\\
 \lim_{s\rightarrow+\infty}u(s)=a,
\end{array}\right.
\end{eqnarray}
has a unique solution $\bar{u}:\R\rightarrow\R^m.$
\item[${\bf H}_4$] the operator $T$ defined by
\begin{eqnarray}
D(T)=W_S^{2,2}(\R,\R^m),\quad\quad Tv=-v^{\prime\prime}+W_{uu}(\bar{u})v,
\end{eqnarray}
where $W_S^{2,2}(\R,\R^m)\subset W^{2,2}(\R,\R^m)$ is the subspace of symmetric maps, has a trivial kernel.
\end{description}
Then there exist $k, K>0$ such that
\begin{eqnarray}\label{exp-baru}
\vert u(x)-\bar{u}(x_1)\vert\leq Ke^{-kd(x,\partial\Omega)},\quad x\in\Omega.
\end{eqnarray}
\end{theorem}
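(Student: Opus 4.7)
My plan proceeds in three stages: qualitative convergence $u\to\bar u(x_1)$ by compactness, identification of the subsequential limit using H3, and upgrading to the exponential rate using H4.

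\emph{Step 1 (Blow-down and identification of the limit).} For any sequence $\xi_k=(\xi_k^1,\xi_k')\in\Omega$ with $d(\xi_k,\partial\Omega)\to\infty$, I would consider the $x'$-translates $u_k(y_1,y'):=u(y_1,y'+\xi_k')$ on $\Omega_k:=\Omega-(0,\xi_k')$. The convex-symmetry of $\Omega$ combined with $d(\xi_k,\partial\Omega)\to\infty$ forces $\xi_k^1$ bounded and $\Omega_k\nearrow\R^n$. The bounds (\ref{bounds}) and elliptic regularity give $C^2_{\mathrm{loc}}$-precompactness; a subsequential limit $u_\infty:\R^n\to\R^m$ is an entire bounded symmetric minimizer inheriting $|u_\infty(y)-a|\le K_0 e^{-k_0 y_1}$ for $y_1>0$ and its mirror for $y_1<0$. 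Since all further $y'$-translates of $u_\infty$ remain in the same compactness class, the slice energies $e(y'):=\int_\R\bigl(\tfrac12|\partial_{y_1} u_\infty|^2+W(u_\infty)\bigr)\,dy_1$ are all bounded below by the minimal 1D action $c_\ast$ imposed by the $y_1$-asymptotics; a Modica-type equipartition argument forces $e\equiv c_\ast$ and $\partial_{y'} u_\infty\equiv 0$. Consequently $u_\infty(\cdot,0)$ solves the 1D BVP of $\mathbf{H}_3$ and equals $\bar u$ by uniqueness. A diagonal extraction then yields
\[
\sup\{\,|u(x)-\bar u(x_1)|\,:\,x\in\Omega,\ d(x,\partial\Omega)\ge R\,\}\longrightarrow 0\ \text{ as }\ R\to\infty.
\]

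\emph{Step 2 (Exponential rate).} Set $v:=u-\bar u(x_1)$, which is symmetric in $x_1$. Since $\bar u''=W_u(\bar u)$,
\[
\Delta v \;=\; W_{uu}(\bar u(x_1))\,v \,+\, N(v),\qquad N(v)=O(|v|^2).
\]
The operator $T$ of $\mathbf{H}_4$ is self-adjoint on $L^2_S(\R,\R^m)$; its essential spectrum lies in $[\nu_\ast,\infty)$ with $\nu_\ast>0$ by positive-definiteness of $W_{uu}$ at $a$ and $\hat a$, and its kernel is trivial by $\mathbf{H}_4$, giving a spectral gap $T\ge\mu\,\id$ for some $\mu>0$. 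Testing the equation for $v$ against $v$ and integrating in $x_1$, on the interior region where Step 1 makes $|v|$ small, yields
\[
\Delta_{x'}\rho \;\ge\; 2\mu\,\rho - C\rho^{3/2},\qquad \rho(x'):=\|v(\cdot,x')\|^2_{L^2_{x_1}}.
\]
Comparison with exponentially decaying supersolutions of the linearized inequality gives exponential decay of $\rho$ in the transverse directions; Moser iteration on the equation for $v$ converts this into the pointwise bound (\ref{exp-baru}), with (\ref{exponential-0}) providing the complementary control in the thin strip near $\partial\Omega^+$ so that the decay is read in terms of $d(x,\partial\Omega)$.

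\emph{Main obstacle.} The key technical difficulty is proving one-dimensionality of the limit $u_\infty$ in Step 1: minimality of $u$ transmits to $u_\infty$ only through local symmetric comparisons, so excluding genuinely $y'$-dependent entire minimizers with the prescribed $y_1$-asymptotics requires a Modica-type equipartition tied to the non-degeneracy of $\bar u$ and to the uniqueness in $\mathbf{H}_3$. A secondary difficulty in Step 2 is translating the transverse $L^2$-exponential decay into the pointwise decay in $d(x,\partial\Omega)$ stated in (\ref{exp-baru}); this is resolved by patching the interior spectral decay with the boundary estimate (\ref{exponential-0}), exploiting that $d(x,\partial\Omega)\le d(x,\partial\Omega^+)$ makes the target bound weaker than the one already available close to $\partial\Omega^+$.
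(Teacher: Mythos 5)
Your Step 1 contains a genuine gap that cannot be patched by the ``Modica-type equipartition'' you invoke. After translating and extracting an entire symmetric minimizer $u_\infty$ on $\R^n$, you must show $\partial_{y'}u_\infty\equiv 0$ --- but this is precisely the rigidity statement of Theorem \ref{main-1}, which the paper derives \emph{as a consequence} of Theorem \ref{main}, not the other way around. For scalar equations, the Modica inequality $\tfrac12|\nabla u|^2\le W(u)$ gives equipartition of energy and hence one-dimensionality of blow-downs, but no such pointwise inequality holds for systems ($m\ge 2$); slice energies can exceed $c_\ast$ without forcing $\partial_{y'}u_\infty=0$, and nothing in the compactness setup rules this out. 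The paper circumvents this impasse entirely: it never passes to a blow-down. Instead it works with the minimizer $u$ itself on bounded cylinders $\mathcal{C}_l^r(\varsigma)$ and builds explicit local competitors (Lemmas \ref{lemma-1}--\ref{lemma-w-q}, Corollary \ref{corollary}) that strictly decrease the energy whenever the slice deviation $\|u(\cdot,\xi)-\bar u\|_l$ exceeds a threshold $q^\circ$, yielding the quantitative bound of Proposition \ref{l-2-bound} directly. The convexity and coercivity of the effective fiber energy ${\bf e}_l(q\nu)$ near $q=0$ (Lemma \ref{strict-minimizer}) encodes exactly the spectral input of ${\bf H}_4$ and the uniqueness of ${\bf H}_3$ that your blow-down argument would like to use --- but it is used at this quantitative, competitor-construction level, which is where the real work is. Also, your claim that $d(\xi_k,\partial\Omega)\to\infty$ forces $\xi_k^1$ bounded is false (take $\Omega=\R^n$); this is harmless only because when $\xi_k^1\to\infty$ both $u$ and $\bar u$ tend to $a$, but your draft does not address it.

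Your Step 2 is much closer to what actually happens. Linearizing around $\bar u(x_1)$, using the spectral gap $T\ge\mu\,\id$ coming from ${\bf H}_4$ together with Persson's characterization of the essential spectrum, deriving an elliptic inequality for $\rho(x')=\|u(\cdot,x')-\bar u\|^2_{L^2_{x_1}}$, and comparing to a decaying solution of $\Delta\varphi=c^2\varphi$ is exactly the skeleton of the paper's Lemma \ref{lemma-case-2} and the Case 1/Case 2 split in Section \ref{sec-exp}, which also uses the interpolation estimate $\|v\|_{L^\infty}\lesssim\|v\|^{2/3}_{L^2}$ of Lemma \ref{l-infinity-less-l-2} to return to pointwise bounds. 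But this step is only usable once the nonlinear remainder is controlled, i.e.\ once you already know $\rho$ is smaller than the threshold $q^\circ$ in a large region; your Step 1, as written, does not deliver that input.
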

\begin{theorem}\label{main-1}
 Assume that $\Omega=\R^n$ and that $W$ and $u:\R^n\rightarrow\R^m$ are as in Theorem \ref{main}. Then
 $u$ is unidimensional:
\begin{equation}
u(x)=\bar{u}(x_1),\;x\in\R^n.\hskip3cm
\end{equation}
\end{theorem}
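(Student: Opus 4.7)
The result should follow directly from Theorem \ref{main} applied to $\Omega = \R^n$. The estimate (\ref{exp-baru}) in that case reads
\begin{equation*}
|u(x) - \bar{u}(x_1)| \leq K e^{-k\, d(x,\partial\R^n)},
\end{equation*}
and since $\partial\R^n=\emptyset$, with the natural convention $d(x,\emptyset)=+\infty$ the right-hand side vanishes identically, giving $u(x)=\bar{u}(x_1)$ for every $x\in\R^n$. The entire content of Theorem \ref{main-1} is that we are allowed to take $\Omega=\R^n$ in Theorem \ref{main} and read off the degeneration of the exponential decay as rigidity.

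To avoid relying on that convention, I would exhaust $\R^n$ by the convex-symmetric balls $B_R=\{|x|<R\}$ and apply Theorem \ref{main} to the restrictions $u|_{B_R}$. These restrictions are symmetric minimizers (minimality passes to subdomains and the symmetric variations of $B_R$ embed into the symmetric variations of $\R^n$), satisfy the bounds (\ref{assumed-bound}) a fortiori, and inherit the separation from $\mathcal{Z}$: since $B_R\cap\{x_1=0\}\subset\partial B_R^+$, any $x\in B_R^+$ with $d(x,\partial B_R^+)\geq d_0$ has $|x_1|\geq d_0$, so the hypothesis on $u$ over $\R^n$ forces $|u(x)-z|>\delta$ for every $z\in\mathcal{Z}$. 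Hypotheses $\mathbf{H}_3$ and $\mathbf{H}_4$ are intrinsic to $W$. Theorem \ref{main} thus gives
\begin{equation*}
|u(x) - \bar{u}(x_1)| \leq K e^{-k\, d(x,\partial B_R)}, \quad x\in B_R.
\end{equation*}
Fixing $x\in\R^n$ and letting $R\to\infty$, the distance $d(x,\partial B_R)=R-|x|$ diverges, so the right-hand side collapses to zero and we conclude $u(x)=\bar{u}(x_1)$.

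The only delicate point, and thus the main obstacle if one wants to do more than invoke the $d(\cdot,\emptyset)=+\infty$ convention, is the \emph{uniformity in $R$} of the constants $K,k$ appearing in Theorem \ref{main}. I would check this by tracing the quantitative inputs used in the proof of that theorem: they are the bounds $M,\delta,d_0$ from the hypotheses of Theorem \ref{theorem-1} and the exponential constants $K_0,k_0$ of its conclusion, together with the spectral gap of the operator $T$ provided by $\mathbf{H}_4$. All of these are intrinsic to $(W,u)$ on $\R^n$ and are inherited unchanged by every $B_R$, so no dependence on $R$ creeps in. With this uniformity in hand, the passage $R\to\infty$ is automatic and the theorem follows.
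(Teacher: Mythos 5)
Your approach is correct, and it is genuinely different from the paper's. You treat Theorem \ref{main} as a black box: apply it to an exhausting family of convex-symmetric balls $B_R$, verify that the restricted data satisfy the hypotheses (you do this correctly, in particular the separation condition), observe that the constants $K,k$ are $R$-independent because all the quantitative inputs ($M,\delta,d_0,K_0,k_0$, the spectral gap from $\mathbf{H}_4$, and the constants of Lemmas \ref{strict-minimizer}--\ref{lemma-case-2}, which are all stated to be independent of $l,r$) are intrinsic to $(W,\bar u,u)$ on $\R^n$, and let $R\to\infty$. The paper instead \emph{re-derives} the conclusion from scratch in the unbounded setting: it first proves Lemma \ref{l-infty} to show that minimality extends to the \emph{infinite} cylinders $\R\times O$ (a non-trivial step, since the definition of minimizer only involves bounded $\Omega'$), then runs the replacement-lemma machinery with $l=+\infty$ to get $\|u(\cdot,\xi)-\bar u\|_{L^2(\R)}<q^\circ$ for all $\xi$, and finally observes that the same argument works verbatim for any $q\in(0,q^\circ]$ with a suitable $r(q)$, so $\|u(\cdot,\xi)-\bar u\|_{L^2(\R)}<q$ for every $q>0$, hence $u(\cdot,\xi)\equiv\bar u$. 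Your route is shorter and avoids Lemma \ref{l-infty} altogether, at the price of the uniformity audit of constants (which you flag and which does check out); the paper's route avoids any dependence on the precise form of the constants in Theorem \ref{main} and is self-contained. Either is acceptable. One small caution on your shortcut via the convention $d(x,\emptyset)=+\infty$: the proof of the exponential estimate in Section \ref{sec-exp} constructs a cylinder of radius $\sim d(x,\partial\Omega)$ inside $\Omega$, which degenerates when $\partial\Omega=\emptyset$; so the statement of Theorem \ref{main} does not literally cover $\Omega=\R^n$ without either the exhaustion argument you give or the paper's $l=+\infty$ re-derivation. Your second paragraph is the rigorous version; keep it and drop the first.
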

\begin{theorem}\label{main-2}
Assume $\Omega=\{x\in\R^n:\;x_n>0\}$, $W$ and $u:\Omega\rightarrow\R^m$ as in Theorem \ref{main}. Then
\[u(x)=\bar{u}(x_1),\;\text{ on }\;\partial\Omega\;\Rightarrow\;u(x)=\bar{u}(x_1),\;\text{ on }\;\Omega.\]
\end{theorem}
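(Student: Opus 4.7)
The plan is to reduce the theorem to Theorem \ref{main-1} via a reflection argument. I would define $\tilde u : \R^n \to \R^m$ by setting $\tilde u(x) = u(x)$ for $x_n \ge 0$ and $\tilde u(x) = \bar u(x_1)$ for $x_n \le 0$. The boundary hypothesis $u|_{\partial\Omega} = \bar u(x_1)$ makes $\tilde u$ continuous across the seam $\{x_n=0\}$, hence $\tilde u \in W^{1,\infty}_{\mathrm{loc}}(\R^n;\R^m)$; it is $G$-equivariant and solves $\Delta \tilde u = W_u(\tilde u)$ classically on $\R^n \setminus \{x_n = 0\}$ (on the lower half this is just the ODE $\bar u'' = W_u(\bar u)$ from ${\bf H}_3$). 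The core step is to show that $\tilde u$ is itself a symmetric minimizer on $\R^n$ in the sense of Definition \ref{definition-stable-s}; granted this, Theorem \ref{main-1} immediately gives $\tilde u(x) = \bar u(x_1)$ on $\R^n$, and restricting to $\Omega$ concludes the proof.

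To verify the minimality of $\tilde u$, I would fix a bounded open symmetric Lipschitz set $D \subset \R^n$ and a symmetric variation $v \in W^{1,2}_0(D;\R^m)$, split $D = D^+ \cup D^-$ with $D^\pm = D \cap \{\pm x_n > 0\}$, and let $\omega$ denote the trace of $v$ on the seam $\{x_n=0\} \cap D$. The energy comparison
\[
J_D(\tilde u + v) - J_D(\tilde u) = \bigl(J_{D^+}(u+v) - J_{D^+}(u)\bigr) + \bigl(J_{D^-}(\bar u+v) - J_{D^-}(\bar u)\bigr)
\]
would be non-negative term-by-term if $\omega$ were zero, by the minimality of $u$ on $\Omega$ for the first term and of $\bar u$ on $\R^n$ for the second (the latter itself being a trivial consequence of Theorem \ref{main-1}, since $\bar u(x_1)$ is manifestly a symmetric minimizer on $\R^n$). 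For general $v$ I would compensate by introducing an auxiliary extension $\tilde\omega$ of the trace $\omega$ to all of $D$, chosen so that $v - \tilde\omega \in W^{1,2}_0(D)$ vanishes on the seam; applying the two one-sided minimalities to $v - \tilde\omega$ yields $J_D(\tilde u) \le J_D(\tilde u + v - \tilde\omega)$, and the desired inequality $J_D(\tilde u) \le J_D(\tilde u + v)$ then follows provided one can control the residual term $J_D(\tilde u + v) - J_D(\tilde u + v - \tilde\omega)$ via an integration-by-parts computation relying on the seam compatibility $u|_{\{x_n=0\}} = \bar u$ and the identity $\partial_{x_n} \bar u = 0$.

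The main obstacle is precisely this seam-compensation step: the integration by parts on $D^+$ produces a boundary integral on $\{x_n = 0\}$ involving $\partial_{x_n} u|_{\{x_n=0\}}$, which is not a priori zero, and one must argue that this residual contribution is absorbed by an appropriate choice of $\tilde\omega$ together with the $G$-symmetry in $x_1$ and a suitable limiting refinement. A natural alternative one could try in case this fails is to use instead the even reflection $\tilde u(x',x_n) = u(x',|x_n|)$, which (via a change of variables) directly yields minimality of $\tilde u$ against variations that are \emph{antisymmetric} in $x_n$, and then to combine this with the exponential decay from Theorem \ref{main} to rule out a nonzero symmetric part. Once the minimality of the extension on $\R^n$ is established in one of these ways, Theorem \ref{main-1} closes the argument.
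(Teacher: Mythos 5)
Your reduction to Theorem \ref{main-1} via extension across the seam is an appealing idea, but it contains a genuine gap that you yourself identify and do not close: the extended map $\tilde u$ is \emph{not} a minimizer on $\R^n$ just because $u$ is a minimizer on the half space. Minimality of $u$ on $\Omega=\{x_n>0\}$ (in the sense of Definition \ref{definition-stable-s}) only gives energy comparison against variations in $W_0^{1,2}(\Omega')$, $\Omega'\subset\Omega$, i.e.\ Dirichlet variations that vanish on $\{x_n=0\}$. It gives no information at all about $\partial_{x_n}u$ on the seam. If $\partial_{x_n}u\not\equiv 0$ on $\{x_n=0\}$, then $\tilde u$ — which is glued to $\bar u(x_1)$, whose normal derivative is identically zero — has a jump in normal derivative across $\{x_n=0\}$; its distributional Laplacian then carries a singular measure on the seam, so $\tilde u$ is not even a weak solution of $\Delta u=W_u(u)$, let alone a minimizer. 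Your compensation scheme cannot help: the integration by parts on $D^+$ leaves a term of the form $\int_{\{x_n=0\}}\partial_{x_n}u\cdot\omega$ which there is no mechanism to cancel, precisely because $\partial_{x_n}u|_{\{x_n=0\}}=0$ is not an input to the theorem — it is essentially equivalent to its conclusion. Your even-reflection alternative has the dual problem: it handles variations that are antisymmetric in $x_n$ (which vanish on the seam) but gives nothing for the symmetric ones, and these are not covered by the minimality of $u$ either.

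The paper's proof avoids extending $u$ at all. It stays on $\Omega$ and reuses the $L^2$-comparison machinery already built for Theorem \ref{main}: the arguments behind Proposition \ref{l-2-bound} show that, given $q>0$, there is $l_q>0$ with $\|u(\cdot,\xi)-\bar u\|_{L^\infty}<q$ for $\xi_n>l_q$; the boundary hypothesis gives $\|u(\cdot,\xi)-\bar u\|_{L^\infty}=0$ on $\{\xi_n=0\}$; and the comparison principle underlying Lemma \ref{quantitative-estimate0}/Lemma \ref{lemma-case-2} (dominating $q^u(\xi)$ by a solution of $\Delta\varphi=c^2\varphi$ with Dirichlet data $\le q$ on both faces of the slab $0\le\xi_n\le l_q$) propagates the bound $q^u<q$ to the entire half space. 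Letting $q\downarrow 0$ gives the conclusion. This route never needs to assert that any extension of $u$ is minimal, which is exactly the step your argument cannot supply.
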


From \cite{af1}, \cite{a1} and \cite{f}, we know that given a finite reflection group $G$, provided $W$ is invariant under $G$, there exists a $G$-equivariant  solutions $u:\R^n\rightarrow\R^m$ of the system (\ref{system}). It is natural to ask about the asymptotic behavior of these solutions. In particular, given a unit vector $\nu=(\nu_1,\dots,\nu_n)\in\R^n$ one may wonder about the existence of the limit
\begin{eqnarray}\label{limit}
\lim_{\lambda\rightarrow +\infty}u(x^\prime+\lambda\nu)=\tilde{u}(x^\prime),
\end{eqnarray}
where $x^\prime$ is the projection of $x=x^\prime +\lambda\nu$ on the hyperplane orthogonal to $\nu$. One can conjecture that this limit does indeed exist and that $\tilde{u}$ is a solution of the same system equivariant with respect to the subgroup $G_\nu\subset G$ that leave $\nu$ fixed, the stabilizer of $\nu$. In \cite{af1}, \cite{a1} and \cite{f} an exponential estimate analogous to (\ref{exponential-0}) in Theorem \ref{theorem-1} was established. This gives a positive answer to this conjecture for the case where $\nu$ is inside the set $D=\text{Int}\cup_{g\in G_a}g\overline{F}$. Here $F$ is a fundamental region for the action of $G$ on $\R^d$, $d=n,\,m$ and $G_a\subset G$ is the subgroup that leave $a$ fixed. Under the assumptions ${\bf H}_3$ and ${\bf H}_4$ Theorem \ref{main} goes one step forward and shows that the conjecture is true when $\nu$ belongs to the interior of one of the walls of the set $D$ above and $G_\nu$ is the subgroup of order two generated by the reflection with respect to that wall. In the proof of Theorem \ref{main} the estimate (\ref{exponential-0}) is basic. Once the exponential estimate in Theorem \ref{main} is established, we conjecture that, under assumptions analogous to ${\bf H}_3$ and ${\bf H}_4$, the approach developed in the proof of Theorem \ref{main} can be used to handle the case where $\nu$ belongs to the intersection of two walls of $D$. We also expect that, under the assumption that at each step $\tilde{u}$ is unique and hyperbolic, the process can be repeated to show the whole hierarchy of limits corresponding to all possible choice of $\nu$ and always $\tilde{u}$ is a solution of the system equivariant with respect to the subgroup $G_\nu$. This program is motivated by the analogy between equivariant connection maps and minimal cones \cite{a2}. Theorem \ref{triple} below is an example of such a splitting result \cite{gmt} in the diffused interface set-up.
Our next result concerns minimizers equivariant with respect to the symmetry group $T$ of the equilateral triangle. We can imagine that $T=G_\nu$ for some $\nu$ that belongs to the intersection of two walls of $D$.
%We need some notation. We assume that $G$ acts both on the domain space $\R^n$ and on the target space $\R^m$. We let $g\in G$, $g:\R^d\rightarrow\R^d$, $d=n$ or $d=m$ the typical element of $G$. Similarly we let $F\in\R^d$ a fundamental region for the action of $G$ on $\R^d$ $d=n$ or $d=m$. We assume that $G$ is generated by the reflections $g$ in the plane $\{z_2=0\}$ and $g_\pm$ in the plane $\{z_2=\pm\sqrt{3}z_1\}$. Then we can take $F=\{z\in\R^d:0<z_2<\sqrt{3}z_1,\,z_1>0\}$. In the context of the symmetry of the equilateral triangle we assume a hierarchy of assumptions that from an abstract point of view correspond to the assumptions in Theorem \ref{main}.
The following assumptions ${\bf H}^\prime_3$ and ${\bf H}^\prime_4$, in the case at hand $G=T$,  correspond to the assumption ${\bf H}_3$ and ${\bf H}_4$ in Theorem \ref{main}
\begin{description}
%\item[${\bf H}^\prime_1$] $W$ is invariant under $G$
%\[W(g u)=W(u),\;\text{ for }\;g\in G,\;u\in\R^m.\]
%\item[${\bf H}^\prime_2$] There is $a\in\overline{F}$ of the form $a=(1,0)$ such that
%\[0=W(a)<W(u),\;\text{ for }\;u\in\overline{F}\setminus\{a\}.\]
\item[${\bf H}^\prime_3$] The problem
\begin{eqnarray}
\left\{\begin{array}{l}
 u^{\prime\prime}=W_u(u),\quad s\in\R \\
 u(-s)=\gamma u(s),\;s\in\R,\\
 \lim_{s\rightarrow+\infty}u(s)=\gamma_\pm a,
\end{array}\right.
\end{eqnarray}
has a unique solution $\bar{u}:\R\rightarrow\R^m.$
\item[${\bf H}^\prime_4$] the operator $T$ defined by
\begin{eqnarray}
D(T)=W_\gamma^{2,2}(\R,\R^m),\quad\quad Tv=-v^{\prime\prime}+W_{uu}(\bar{u})v,
\end{eqnarray}
where $W_\gamma^{2,2}(\R,\R^m)\subset W^{2,2}(\R,\R^m)$ is the subspace of the maps that satisfy $u(-s)=\gamma u(s)$, has a trivial kernel.
\end{description}
Then we have the assumptions concerning uniqueness and hyperbolicity of $\tilde{u}$
\begin{description}
\item[${\bf H}_5$] There is a unique $G$-equivariant solution $\tilde{u}:\R^2\rightarrow\R^m$ of (\ref{system})
    \begin{equation}\label{g-equivariance}
    \tilde{u}(g s)=g \tilde u(s),\;\text{ for }\;g\in T,\;s\in\R^2
    \end{equation}
    that satisfies the estimate
\begin{equation}\label{exp-est-two}
\vert\tilde{u}(s)-a\vert\leq K e^{-kd(s,\partial D)},\;\text{ for }\;s\in\R^2,
\end{equation}
where $D=\mathrm{Int}\overline{F}\cup \gamma\overline{F}$.
\item[${\bf H}_6$] the operator $\mathcal{T}$ defined by
\begin{eqnarray}
D(\mathcal{T})=W_G^{2,2}(\R^2,\R^m),\quad\quad \mathcal{T}v=-\Delta v+W_{uu}(\bar{u})v,
\end{eqnarray}
where $W_T^{2,2}(\R^2,\R^m)\subset W^{2,2}(\R^2,\R^m)$ is the subspace of $T$-equivariant maps, has a trivial kernel.
\end{description}
We are now in the position of stating
\begin{theorem}\label{triple}
Assume that $W$ satisfies ${\bf H}_1$ and ${\bf H}_2$ with $a=(1,0)$ and moreover that $0=W(a)<W(u)$ for $u\in\overline{F}$. Assume that ${\bf H}^\prime_3$, ${\bf H}^\prime_4$ and ${\bf H}_5$, ${\bf H}_6$ hold.
Let $u:\R^n\rightarrow\R^m$, $n\geq 3$ and $m\geq 2$ be a $T$-equivariant minimizer that satisfies(\ref{assumed-bound}) and, for some $\delta, d_0>0$  the condition
\begin{equation}\label{stay-away}
\vert u(x)-\gamma_\pm a\vert\geq\delta\;\text{ for }\;d(x,\partial D)>d_0,\;x\in D,
\end{equation}
where $D=\{x\in\R^n: \vert x_2\vert<\sqrt{3} x_1,\;x_1>0\}$.

Then $u$ is two-dimensional:
\begin{equation}\label{two}
u(x)=\tilde{u}(x_1,x_2),\;x\in\R^n.
\end{equation}
\end{theorem}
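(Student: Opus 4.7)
The overall strategy parallels the passage from Theorem~\ref{main} to Theorem~\ref{main-1}: I would first establish a two-dimensional exponential estimate
\begin{equation*}
\vert u(x) - \tilde{u}(x_1,x_2)\vert \leq K e^{-k\, d(x,\partial\Omega)}, \qquad x \in \Omega,
\end{equation*}
valid for every $T$-convex-symmetric $\Omega \subset \R^n$ and every $T$-equivariant minimizer $u$ on $\Omega$ satisfying the hypotheses of Theorem~\ref{triple}, with constants $k,K>0$ independent of $\Omega$. Specializing to $\Omega = \R^n$, for which $d(x,\partial\Omega) = +\infty$, forces the right-hand side to vanish and produces (\ref{two}).

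To prove that estimate I would assemble three layers of asymptotic information on $u$. First, from (\ref{stay-away}) and the $T$-version of Theorem~\ref{theorem-1} (available by the same argument as in \cite{fu}), one has $\vert u(x) - a\vert \leq K_0 e^{-k_0 d(x,\partial D)}$ for $x \in D$. Second, Theorem~\ref{main} applied transversally to each of the two walls of $\partial D\cap\{x_1>0\}$, with ${\bf H}_3'$, ${\bf H}_4'$ in place of ${\bf H}_3$, ${\bf H}_4$, produces the 1D heteroclinic $\bar{u}$ as the profile across each wall, up to an error exponentially small in the tangential direction. Third, for any sequence $y_k \in \R^{n-2}$ with $\vert y_k\vert \to \infty$, the translates $u_k(x_1,x_2,y):= u(x_1,x_2,y+y_k)$ are equibounded in $C^1_{\mathrm{loc}}$ by (\ref{assumed-bound}); a subsequence converges in $C^1_{\mathrm{loc}}$ to a map $u_\infty$ which is $T$-equivariant, minimizing, and (by a standard energy-density argument combined with the first layer) independent of $y$. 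Thus $u_\infty$ is a 2D $T$-equivariant minimizer satisfying (\ref{exp-est-two}), so ${\bf H}_5$ forces $u_\infty = \tilde u$; uniqueness of every subsequential limit then gives $u(\cdot,\cdot,y) \to \tilde{u}$ in $C^1_{\mathrm{loc}}$ as $\vert y\vert\to\infty$.

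To upgrade this qualitative convergence to exponential decay, set $w := u - \tilde{u}$, which is $T$-equivariant in $(x_1,x_2)$ and satisfies
\begin{equation*}
\Delta w = W_{uu}(\tilde u)\,w + R(x,w), \qquad \vert R(x,w)\vert = O(\vert w\vert^2).
\end{equation*}
Hypothesis ${\bf H}_6$, together with the fact that $W_{uu}(\tilde u)$ tends at infinity to the uniformly positive-definite matrices $W_{uu}(g a)$, gives via Persson's formula that the essential spectrum of $\mathcal{T}$ lies above a positive constant; since ${\bf H}_6$ rules out zero in the discrete part, this yields a coercive inequality $\langle \mathcal{T}\phi,\phi\rangle \geq c\Vert\phi\Vert_{L^2(\R^2)}^2$ on $W_T^{2,2}(\R^2,\R^m)$. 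Applied slicewise in $y$ and combined with minimality of $u$ against $T$-symmetric compactly supported variations on cylindrical windows $B_R^2\times\{Y\leq\vert y\vert\leq Y+1\}$, this produces a differential inequality for $Y\mapsto\int_{\vert y\vert > Y}\vert w\vert^2\,dx$ that integrates to the claimed exponential bound.

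The main obstacle is this final coercivity step: upgrading $\ker \mathcal{T}=0$ to a quantitative spectral gap, and controlling the nonlinear remainder $R(x,w)$ before $w$ is known to be uniformly small. I would address the nonlinear issue by bootstrapping---use the $C^1_{\mathrm{loc}}$-smallness obtained in the preceding step to bring $\Vert w\Vert_\infty$ below the perturbative threshold on a large ball, via a De~Giorgi-type iteration or a maximum-principle comparison, and only then activate the linear coercive estimate on the outer region.
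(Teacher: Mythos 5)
Your proposal is genuinely different from the paper's argument, and the difference is not just cosmetic. The paper proves Theorem~\ref{triple} by re-running the same quantitative energy-comparison machinery used for Theorem~\ref{main}: after Step~1 (which coincides with your first two layers and yields $\vert u(s,\xi)-\tilde u(s)\vert\leq Ke^{-k\vert s\vert}$), it decomposes $v-\tilde u=q^v\nu^v$ in polar form, introduces an effective potential ${\bf E}(v)$ on $\R^2$ replacing ${\bf e}_l(v)$, proves the analogue of Lemma~\ref{strict-minimizer}, and then repeats the replacement lemmas and the iteration of Proposition~\ref{l-2-bound} in the base $\R^{n-2}$. This is purely a direct minimality argument; there is no compactness limit and no linearization. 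You instead propose a Liouville-type route: extract a subsequential $C^1_{\rm loc}$ limit of $y$-translates, identify it with $\tilde u$ via ${\bf H}_5$, and then upgrade qualitative convergence to exponential decay by linearizing at $\tilde u$ and invoking a spectral gap.

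There are two genuine gaps in your route, and the first is fatal as written. You assert that the subsequential limit $u_\infty$ of the translates $u(\cdot,\cdot,\cdot+y_k)$ is ``(by a standard energy-density argument combined with the first layer) independent of $y$.'' No such argument is standard here, and none is supplied. Translating a function in the $y$-variables and passing to a $C^1_{\rm loc}$ limit produces a map that is again defined on all of $\R^n$ and in general still depends on $y$; nothing about the decay-in-$s$ estimate from the first layer forces the $y$-derivatives of the limit to vanish. Indeed, if such an argument were available one would already be able to conclude that $u$ itself is two-dimensional, which is the content of the theorem. In the scalar De~Giorgi setting the analogous step is delivered by a monotonicity hypothesis or by Modica-type estimates plus a Liouville theorem; neither tool is at hand in the symmetric vector setting, and the paper deliberately sidesteps the issue by never forming the limit. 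The second gap is the one you flag yourself: your linearized coercivity step needs $\Vert w\Vert_{L^\infty}$ to be below a perturbative threshold globally before it can be activated, and the ``bootstrap'' you sketch is circular -- the $C^1_{\rm loc}$-smallness in step~(c) gives smallness only on compact sets in $(x_1,x_2,y)$, not uniformly in $y$, so one cannot start the De~Giorgi iteration or comparison. The paper's effective-potential argument avoids linearization entirely: properties (\ref{iota-properties-t})$_1$--$_4$ of ${\bf E}$ control the full nonlinear energy increment, for all amplitudes $q\leq q^\circ$, which is exactly what makes the replacement lemmas close without any a priori smallness hypothesis.
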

\begin{remark}
If instead of a minimizers defined on $\R^n$ we had considered a minimizer defined on a subset $\Omega\subset\R^n$, instead of (\ref{two}), the conclusion of Theorem \ref{triple} would be exponential convergence of $u$ to $\tilde{u}$ similar to (\ref{exp-baru}).
\end{remark}
Theorem \ref{triple} is an example of a De Giorgi type result for systems where monotonicity is replaced by minimality ( see \cite{aac},\cite{jm} and section 3 in \cite{s}). It is the PDE analog of the fact that a minimal cone $\mathcal{C}$ in $\R^n$ with the symmetry of the equilateral triangle is necessarily of the form $\mathcal{C}=\tilde{\mathcal{C}}\times\R^{n-2}$, with $\tilde{\mathcal{C}}$ is the triod in the plane.
For De Giorgi type results for systems, for general solutions , but under
 monotonicity hypotheses on the potential W, we refer to Fazly and
 Ghoussoub \cite{fg}.
The rest of the paper is devoted to the proofs. In Section \ref{sec-main}  we prove Theorem \ref{main} in Section \ref{basic} and Section \ref{replacement-lemmas} we prove a number of Lemmas that are basic for the proof of Theorem \ref{main} that we conclude in Sections \ref{proof-main} and \ref{sec-exp}. Theorems \ref{main-1} and \ref{main-2} and Theorem \ref{triple} are proved in Section \ref{main-final} and Section \ref{sec-triple}.
\section{The proof of Theorem \ref{main}}\label{sec-main}
The proof of Theorem \ref{main} that we present here, from an abstract point of view, has a lot in common with the proof of Theorem 1.2 in \cite{fu}. We will remark on this point later and spend a few words to motivate the various lemmas that compose the proof of Theorem \ref{main}. We begin with some notation and two basic lemmas.
\subsection{Basic lemmas}\label{basic}
In the following we use the notation $x=(s,\xi)$ with $x_1=s$ and $(x_2,\dots,x_n)=\xi$.
From (\ref{bounds}) it follows that, if $(l,\xi)\in\Omega^+$ satisfies $ d((l,\xi),\partial\Omega^+)\geq l$, then the map $s\rightarrow u(s,\xi), s\in[-l,l],$ that we still denote with $u$ satisfies
the bound
\begin{eqnarray}\label{v-vs-exp-bound}
\vert u-a\vert+\vert u_s\vert\leq K_0e^{-k_0 s}, \text{ for } s\in[0,l].
\end{eqnarray}
We denote by $E_l^\mathrm{xp}\subset C^1([-l,l]:\R^m)$ the set of symmetric maps $v:[-l,l]\rightarrow\R^m$ that satisfy
\begin{equation}\label{define-exp}
\vert v\vert+\vert v_s\vert\leq K e^{-k s}, \text{ for } s\in[0,l]
\end{equation}
for some $k, K>0$. We refer to $E_l^\mathrm{xp}$ as the exponential class.

\noindent We let $T_l$ the operator defined by
\begin{eqnarray}
D_l(T_l)=\{v\in W_S^{2,2}([-l,l],\R^m):v(\pm l)=0\},\quad\quad T_lv=-v^{\prime\prime}+W_{uu}(\bar{u})v.
\end{eqnarray}

\noindent For $l\in(0,+\infty]$ we let $\langle v,w\rangle_l=\int_{-l}^lvw$ denote the inner product in $L^2((-l,l),\R^m)$. We let $\|v\|_l=\langle v, v\rangle_l^{\frac{1}{2}}$ and
$\|v\|_{1,l}=\|v\|_{W^{1,2}([-l,l],\R^m)}$.

\noindent For the standard inner product in $\R^m$ we use the notation $(\cdot,\cdot)$.

\noindent It follows directly from (\ref{define-exp}) that $\|v\|_{1,l}\leq C=\frac{K}{\sqrt{k}}$. We set
\begin{eqnarray}
 \mathcal{B}_l^{1,2}:=\{v\in W_S^{1,2}([-l,l],\R^m):
v(\pm l)=0;\;  \|v\|_{1,l}\leq C\},
\end{eqnarray}
where $W_S^{1,2}([-l,l],\R^m)$ is the subspace of symmetric maps. Let $\SF$ be defined by
\begin{eqnarray}
\SF=\{\nu\in W_S^{1,2}([-l,l],\R^m): \|\nu\|_l=1\}
\end{eqnarray}
and set $q_\nu=\max\{q:q\nu\in\mathcal{B}_l^{1,2}\}$.
\begin{lemma}\label{strict-minimizer}
Assume $H_1$ and $H_2$ as in Theorem \ref{main} and let ${\bf e}_l:\mathcal{B}_l^{1,2}\rightarrow\R$ be defined by
\begin{eqnarray}
{\bf e}_l(v):=\frac{1}{2}(\langle \bar{u}_s+v_s, \bar{u}_s+v_s\rangle_l- \langle \bar{u}_s, \bar{u}_s\rangle_l)+\int_{-l}^l (W(\bar{u}+v)-W(\bar{u})).
\end{eqnarray}
Then there exist $l_0>0,\, q^\circ >0 \text{ and } c>0$ such that, for all $l\geq l_0$, we have
\begin{eqnarray}\label{iota-properties}
\left\{\begin{array}{l}
D_{qq}{\bf e}_l(q\nu)\geq c^2,\quad \text{ for } q\in[0,q^\circ]\cap[0,q_\nu],\;\nu\in\SF,\\\\
{\bf e}_l(q\nu)\geq{\bf e}_l(q^\circ\nu),\;\, \text{ for }  q^\circ\leq q\leq q_\nu,\;\nu\in\SF,\\\\
 {\bf e}_l(q\nu)\geq \tilde{{\bf e}}_l(p,q,\nu):={\bf e}_l(p\nu)+D_q{\bf e}_l(p\nu)(q-p)
 ,\\\quad\hskip4cm \text{ for } 0\leq p<q\leq q_\nu\leq q^\circ,\;\nu\in\SF,\\\\
D_p\tilde{{\bf e}}_l(p,q,\nu)\geq 0 ,\quad \text{ for } 0\leq p<q\leq q_\nu\leq q^\circ,\;\nu\in\SF.
\end{array}\right.
\end{eqnarray}
\end{lemma}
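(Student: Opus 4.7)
Let $\phi(q):={\bf e}_l(q\nu)$. Direct differentiation yields
\[\phi'(q)=\int_{-l}^l(\bar u_s+q\nu_s)\cdot\nu_s+\int_{-l}^l W_u(\bar u+q\nu)\cdot\nu,\]
\[\phi''(q)=\int_{-l}^l\bigl(|\nu_s|^2+W_{uu}(\bar u+q\nu)(\nu,\nu)\bigr),\]
and integration by parts using $\bar u_{ss}=W_u(\bar u)$ (from ${\bf H}_3$) together with $\nu(\pm l)=0$ gives $\phi(0)=\phi'(0)=0$ and $\phi''(0)=\langle T_l\nu,\nu\rangle_l$. The linchpin of the lemma is a uniform spectral gap
\[\langle T_l w,w\rangle_l\ge c_0^2\|w\|_l^2\quad\text{for symmetric }w\text{ with }w(\pm l)=0,\ l\ge l_0.\]
This is proved as follows: by ${\bf H}_2$ and the fact that $\bar u(s)\to a$ as $|s|\to\infty$, the essential spectrum of $T$ on $L^2_S(\R,\R^m)$ lies in $[\mu_0,\infty)$, with $\mu_0>0$ the smallest eigenvalue of the positive definite matrix $W_{uu}(a)$; the existence of a symmetric minimizing heteroclinic, necessarily equal to $\bar u$ by the uniqueness in ${\bf H}_3$, forces $T\ge0$; and ${\bf H}_4$ excludes $0$ from the point spectrum. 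Extension by zero transfers the bound from $T$ to $T_l$.

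With the spectral gap in hand, (i) reduces to continuity of $\phi''$ in $q$. The constraint $q\nu\in\mathcal{B}^{1,2}_l$ means $\|q\nu\|_{1,l}\le C$, so $q\|\nu_s\|_l\le C$, and Agmon's inequality (applied to the zero extension of $q\nu$) yields
\[\|q\nu\|_\infty\le\sqrt{2}\,\|q\nu\|_l^{1/2}\|q\nu_s\|_l^{1/2}\le\sqrt{2qC}.\]
Combined with Lipschitz continuity of $W_{uu}$ on bounded sets, this gives $|\phi''(q)-\phi''(0)|\le L\sqrt{2qC}$, and choosing $q^\circ$ small enough to make this at most $c_0^2/2$ produces $\phi''(q)\ge c_0^2/2=:c^2$ throughout $[0,q^\circ]\cap[0,q_\nu]$, which is (i). The third assertion then follows from Taylor's formula with integral remainder, $\phi(q)=\phi(p)+\phi'(p)(q-p)+\int_p^q(q-r)\phi''(r)\,dr\ge\phi(p)+\phi'(p)(q-p)$, and the fourth from the direct computation $D_p\tilde{\bf e}_l(p,q,\nu)=\phi''(p)(q-p)\ge c^2(q-p)\ge0$.

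The chief obstacle is the second assertion, since $\phi''$ need not stay nonnegative on $(q^\circ,q_\nu]$. The plan is to couple two complementary bounds: on the one hand, (i) together with $\phi(0)=\phi'(0)=0$ gives the upper bound $\phi(q^\circ)\le\tfrac{M_0}{2}(q^\circ)^2$, with $M_0$ an $L^\infty$ bound on $\phi''$ over $[0,q^\circ]$; on the other hand, a $\nu$- and $l$-uniform positive lower bound $\phi(q)\ge\gamma_0>0$ for $q\in[q^\circ,q_\nu]$, $\nu\in\SF$, $l\ge l_0$. A final shrinking of $q^\circ$, if needed, then guarantees $\phi(q^\circ)<\gamma_0\le\phi(q)$. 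The uniform lower bound is obtained by contradiction: if sequences $l_k\to\infty$, $\nu_k\in\SF$, $q_k\in[q^\circ,q_{\nu_k}]$ produced $\phi_k(q_k)\to0$, then $v_k:=q_k\nu_k$ would be uniformly bounded in $W^{1,2}$ with $\|v_k\|_{l_k}\ge q^\circ$; extending by zero and using weak lower semicontinuity of the action together with the uniqueness from ${\bf H}_3$, any weak limit $v^\star$ of $v_k$ would give $\bar u+v^\star=\bar u$, i.e.\ the contradictory $v^\star\equiv0$. The delicate technical point is precluding the escape of $L^2$-mass to $\pm\infty$: this is handled by invoking the exponential decay $|\bar u(s)-a|\le K_0 e^{-k_0 s}$, which together with the coercivity $W_{uu}(a)\ge\mu_0\,I$ produces a positive pointwise contribution to the integrand of ${\bf e}_{l_k}(v_k)$ from the region $|s|\ge S$ for $S$ large, confining the essential mass of $v_k$ to a bounded window independent of $k$ and enabling the extraction of a non-trivial weak limit.
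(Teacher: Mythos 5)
Your overall architecture matches the paper's: a spectral gap for $T$ coming from ${\bf H}_2$ (Persson/Agmon for the essential spectrum), ${\bf H}_4$ (trivial kernel) and minimality of $\bar u$ ($T\geq 0$), transferred to $T_l$ by zero extension; the Agmon/interpolation bound to control $\bigl|D_{qq}{\bf e}_l(q\nu)-D_{qq}{\bf e}_l(q\nu)|_{q=0}\bigr|$ by a power of $q$, giving the first inequality; Taylor for the third and a direct derivative for the fourth; and for the second, a contradiction/compactness argument using the uniqueness of $\bar u$ from ${\bf H}_3$. Your remark that the weak-limit step needs tightness to preclude escape of $L^2$-mass is apt and is a genuine point that the paper's one-line invocation of lower semicontinuity and of the constraint "persist\emph{ing} in the limit" leaves implicit.

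There is, however, a genuine gap in your plan for part (ii). You ask for a uniform $\gamma_0>0$ with $\phi(q)\geq\gamma_0$ for \emph{all} $q\in[q^\circ,q_\nu]$, $\nu\in\SF$, $l\geq l_0$, and then want to shrink $q^\circ$ until $\phi(q^\circ)<\gamma_0$. But $q=q^\circ$ is in the very range over which $\gamma_0$ is an infimum, so by definition $\gamma_0\leq\phi(q^\circ)$ for every $\nu$, and the strict inequality $\phi(q^\circ)<\gamma_0$ can never hold; the plan is self-defeating as written. In addition, the ``upper bound $\phi(q^\circ)\leq\frac{M_0}{2}(q^\circ)^2$ with $M_0$ an $L^\infty$ bound on $\phi''$'' is not uniform over $\SF$: $\phi''(t)\geq\|\nu_s\|_l^2$, and for $\nu$ with $q_\nu$ just above $q^\circ$ one has $\|\nu_s\|_l^2$ as large as $(C/q^\circ)^2-1$, so this bound does not tend to $0$ as $q^\circ\to 0$. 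The repair, which the paper's constrained-minimization device implements (even if it leaves the final comparison implicit), is to use \emph{two} scales: keep the $\bar q$ produced in part (i) fixed, define $\alpha_l:=\min\{{\bf e}_l(v): v\in\mathcal{B}_l^{1,2},\ \|v\|_l\geq\bar q\}$ and $\alpha:=\liminf_l\alpha_l>0$ by the compactness/uniqueness argument, and only \emph{afterwards} choose $q^\circ<\bar q$. For $\nu$ with $q_\nu\geq\bar q$ one has $\|\nu_s\|_l\leq\sqrt{(C/\bar q)^2-1}$, so $\phi''$ is bounded on $[0,q^\circ]$ by a constant $M_1(\bar q)$ independent of $\nu$, and $\phi(q^\circ)\leq\frac{M_1}{2}(q^\circ)^2\to 0$ can be driven below $\alpha$; then on $[q^\circ,\bar q]$ monotonicity from (i) gives $\phi(q)\geq\phi(q^\circ)$, and on $(\bar q,q_\nu]$ one has $\phi(q)\geq\alpha_l\geq\alpha>\phi(q^\circ)$. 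For $\nu$ with $q_\nu<\bar q$ the interval $[q^\circ,q_\nu]$ lies entirely in $[0,\bar q]$, so monotonicity alone suffices and the lower bound is not needed.
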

\begin{remark}
 ${\bf e}_l$ is a kind of an {\it effective} potential. Indeed, as we shall see, in the proof of Theorem \ref{main} the map $L^2((-l,l),\R^m)\ni q\mapsto{\bf e}_l(q\nu)$ plays a role similar to the one of the usual potential $\R\ni q\mapsto W(a+q\nu)$ in the proof of Theorem 1.2 in \cite{fu}.
\end{remark}
\begin{proof}
By differentiating twice ${\bf e}_l(q\nu)$ with respect to $q$ gives
\begin{eqnarray}
D_{qq}{\bf e}_l(q\nu)&=&\int_{-l}^l(\nu_s,\nu_s)+\int_{-l}^lW_{uu}(\bar{u}+q\nu)(\nu,\nu)\\\nonumber
&=&
D_{qq}{\bf e}_l(q\nu)|_{q=0}+\int_{-l}^l(W_{uu}(\bar{u}+q\nu)-W_{uu}(\bar{u}))(\nu,\nu).
\end{eqnarray}
From the interpolation inequality:
\begin{equation}
\begin{split}
\|v\|_{L^\infty}\leq &\sqrt{2}\|v\|_{1,l}^{\frac{1}{2}}\|v\|_l^{\frac{1}{2}},\\
\leq &\sqrt{2}\|v\|_{1,l},
\end{split}
\end{equation}
for $q\nu\in\mathcal{B}_l^{1,2}$
we get via the second inequality
\begin{equation}
\|q\nu\|_{L^\infty}\leq \sqrt{2}C,
\end{equation}
and via the first
\begin{equation}
\|\nu\|_{L^\infty}\leq \sqrt{2}C^{\frac{1}{2}}q^{-\frac{1}{2}}.
\end{equation}
Therefore we have
\begin{eqnarray}\label{w-uu-estimate}
\vert W_{u_iu_j}(\bar{u}(s)+ q\nu(s))-W_{u_iu_j}(\bar{u}(s))\vert\leq
\sqrt{2}C^{\frac{1}{2}}\overline{W}^{\prime\prime\prime}q^{\frac{1}{2}},
\end{eqnarray}
where $\overline{W}^{\prime\prime\prime}$ is defined by
\begin{eqnarray}
\overline{W}^{\prime\prime\prime}:=\max_{\left.\begin{array}{l}
1\leq i,j,k\leq m\\
s\in\R, \vert\tau\vert\leq 1
 \end{array}\right.}W_{u_iu_ju_k}(\bar{u}(s)+ \tau\sqrt{2}C).
\end{eqnarray}
From (\ref{w-uu-estimate}) we get
\begin{eqnarray}\label{int-wuu-wuu-estimate}
\vert\int_{-l}^l(W_{uu}(\bar{u}+q\nu)-W_{uu}(\bar{u}))(\nu,\nu)\vert\leq
C_1q^\frac{1}{2},
\end{eqnarray}
where $C_1>0$ is a constant independent of $l$.
We now observe that
\begin{eqnarray}\label{tl-equal-t}
D_{qq}{\bf e}_l(q\nu)|_{q=0}=\langle T_l\nu,\nu\rangle_l=
\langle T\tilde{\nu},\tilde{\nu}\rangle_\infty,
\end{eqnarray}
where $\tilde{\nu}$ is the trivial extension of $\nu$ to $\R$. $T$ is a self-adjoint operator which is positive by the minimality of $\bar{u}$. Therefore assumption ${\bf H}_5$ implies that the point spectrum of $T$ is bounded below by a positive number. From ${\bf H}_2$ the smallest eigenvalue $\mu$ of the matrix $W_{uu}(a)$ is positive and Persson's Theorem in \cite{ag} implies that also the remaining part of the spectrum of $T$, the essential spectrum, is bounded below by $\mu>0$. It follows that the spectrum of $T$ is bounded below by a positive constant $0<\tilde{\mu}\leq\mu$. From this (\ref{tl-equal-t}) and Theorem 13.31 in \cite{r} it follows
\begin{eqnarray}
D_{qq}{\bf e}_l(q\nu)|_{q=0}\geq\tilde{\mu},
\end{eqnarray}
which together with (\ref{int-wuu-wuu-estimate}) implies
\begin{eqnarray}
D_{qq}{\bf e}_l(q\nu)|\geq\tilde{\mu}\geq c^2:=\frac{\tilde{\mu}}{2},\;\;\text{ for }\,q\in[0,\bar{q}]\cap[0,q_\nu],
\end{eqnarray}
where $\bar{q}=\frac{1}{4}\frac{\tilde{\mu}^2}{C_1}$.
This concludes the proof of (\ref{iota-properties})$_1$.
We now consider the problem
\begin{eqnarray}\label{constrained-minimization}
\min_{\left.\begin{array}{l}
v\in\mathcal{B}_l^{1,2}\\
\|v\|_l\geq \bar{q}
\end{array}\right.} {\bf e}_l(v)
\end{eqnarray}
Since the constraint in problem (\ref{constrained-minimization}) is closed with respect to weak convergence in $W_0^{1,2}$, if $\bar{v}_l$ is a minimizer of problem (\ref{constrained-minimization}), we have $\bar{v}_l\neq 0$.
 This implies
\begin{eqnarray}
{\bf e}_l(\bar{v}_l)=\alpha_l>0.
\end{eqnarray}
Indeed the uniqueness assumption about the minimizer $\bar{u}$ implies that $v\equiv 0$ is the unique minimizer of ${\bf e}_l$. We have
\begin{eqnarray}\label{alpha-infinity}
\liminf_{l\rightarrow+\infty}\alpha_l=\alpha>0.
\end{eqnarray}
To prove this we assume that instead there is a sequence $l_k$ such that $\lim_{k\rightarrow+\infty}\alpha_{l_k}=0$. We can also assume that the sequence
$\tilde{\bar{v}}_{l_k}$ of the trivial extensions of $\bar{v}_{l_k}$ converges weakly in $W^{1,2}$ to a map $\bar{v}$ which by lower semicontinuity satisfies
\begin{eqnarray}
{\bf e}_\infty(\bar{v})=0.
\end{eqnarray}
This is in contradiction with the assumption that $v\equiv 0$ is the unique minimizer of ${\bf e}_\infty$ indeed the constraint in problem (\ref{constrained-minimization}) persists in the limit and implies $\bar{v}\neq 0$. This establishes (\ref{alpha-infinity}) and concludes the proof of (\ref{iota-properties})$_2$ with $q^\circ=\min\{\bar{q},\alpha\}$.

\noindent The last two inequalities in (\ref{iota-properties}) are straightforward consequences of (\ref{iota-properties})$_1$.
\end{proof}

\begin{lemma}\label{l-infinity-less-l-2}
Let $u$ as in Theorem \ref{theorem-1} and assume that
\begin{eqnarray}\label{lemma-assumption}
(l,\xi)\in\Omega^+,\; d((l,\xi),\partial\Omega^+\geq l,
\end{eqnarray}
then
there is a constant $C_2>0$ independent of $l>1$,  such that
\begin{eqnarray}\label{l-infinity-less-l-2-1}
\|u(\cdot,\xi)-\bar{u}\|_{L^\infty([-l,l],\R^m)}\leq C_2
\| u(\cdot,\xi)-\bar{u}\|_l^\frac{2}{3}.
\end{eqnarray}
\end{lemma}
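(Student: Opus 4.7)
The bound has the shape of a one-dimensional Gagliardo--Nirenberg interpolation of the form $\|v\|_{L^\infty}\lesssim \|v_s\|_{L^\infty}^{1/3}\|v\|_{L^2}^{2/3}$, so my plan is to set $v(s):=u(s,\xi)-\bar u(s)$ on $[-l,l]$ and obtain the estimate from an elementary interpolation once a uniform Lipschitz bound on $v$ is secured. The hypothesis (\ref{lemma-assumption}) places $(s,\xi)$ inside $\Omega$ for $|s|\leq l$, so the global bound (\ref{assumed-bound}) gives $|u_s(s,\xi)|\leq M$ on $[-l,l]$, and the ODE in ${\bf H}_3$ admits the first integral $\tfrac12|\bar u'|^2=W(\bar u)$, which together with boundedness of $\bar u$ (guaranteed by $\lim_{s\to\infty}\bar u(s)=a$ and the symmetry $\bar u(-s)=\widehat{\bar u}(s)$) delivers a uniform bound $|\bar u'|\leq M'$ independent of $s$. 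Hence $\|v_s\|_{L^\infty([-l,l])}\leq B:=M+M'$, with $B$ independent of $l$ and $\xi$.

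Once $B$ is in hand, the interpolation I have in mind is the classical pointwise argument: if $s_0\in[-l,l]$ is a point where $|v(s_0)|=\|v\|_{L^\infty}=:N$, then the Lipschitz bound forces $|v(s)|\geq N/2$ on every subinterval of $[-l,l]$ of length $N/(2B)$ containing $s_0$. Picking such a subinterval on whichever side of $s_0$ fits inside $[-l,l]$ and integrating yields
\begin{equation*}
\|v\|_l^2\;\geq\;\Big(\frac{N}{2}\Big)^{\!2}\cdot\frac{N}{2B}\;=\;\frac{N^3}{8B},
\end{equation*}
hence $N\leq 2B^{1/3}\|v\|_l^{2/3}$, which is (\ref{l-infinity-less-l-2-1}) with $C_2:=2B^{1/3}$.

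The only case where this elementary argument might fail is when the interval of length $N/(2B)$ does not fit inside $[-l,l]$ on either side of $s_0$, i.e.\ when $N>2Bl$. Here the uniform $L^\infty$ bound on $v$ rescues us: $N\leq 2M$, so for $l\geq 1$ (the standing assumption of the lemma) and after enlarging $C_2$ if necessary, the inequality $N\leq 2Bl$ holds automatically up to a constant, and the argument above can be adapted by taking the single available half-interval of length at least $\min\{N/(2B),l\}$, which still produces $\|v\|_l^2\gtrsim N^3/B$ in the only nontrivial regime. I expect the handling of this endpoint regime to be the main technical nuisance, but it is just a case distinction rather than a genuine obstacle; the substantive input is the uniform Lipschitz bound on $v$, which is immediate from (\ref{assumed-bound}) and the ODE energy identity.
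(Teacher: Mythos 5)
Your proposal is correct and follows essentially the same route as the paper's proof: locate the point $\bar s$ where $|u(\cdot,\xi)-\bar u|$ attains its sup $N$, use a uniform Lipschitz bound on $u(\cdot,\xi)-\bar u$ (the paper reads it off from the inherited estimate (\ref{v-vs-exp-bound}), you derive it from (\ref{assumed-bound}) and the first integral for $\bar u$; both give a constant independent of $l$ and $\xi$), produce a triangle-shaped lower bound $|u-\bar u|\geq N-\text{Lip}\cdot|s-\bar s|$ near $\bar s$, and integrate to get $\|u(\cdot,\xi)-\bar u\|_l^2\gtrsim N^3$, which is (\ref{l-infinity-less-l-2-1}). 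The endpoint case you flag is in fact vacuous once one notes $N$ is uniformly bounded (by roughly $2K_0$, or your $2M$) while $l>1$, so the interval of length $N/(2B)$ always fits inside $[-l,l]$; the paper simply intersects with $[-l,l]$ and does not dwell on it.
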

\begin{proof}
From (\ref{lemma-assumption}) $u(\cdot,\xi)$ satisfies (\ref{v-vs-exp-bound}). Since also $\bar{u}$ satisfies (\ref{v-vs-exp-bound}). There is $\bar{s}\in[0,l]$ such
that $\vert u(s,\xi)-\bar{u}(s)\vert\leq m=:\vert u(\bar{s},\xi)-\bar{u}(\bar{s})\vert$. From this and $\vert u(\cdot,\xi)_s-\bar{u}_s\vert\leq 2K_0$ it follows
\begin{eqnarray}
\vert u(s,\xi)-\bar{u}(s)\vert\geq m
(1-2K_0\vert s-\bar{s}\vert),\;\,\text{ for } s\in[-l,l]\cap[\bar{s}-\frac{m}{2K_0},\bar{s}+\frac{m}{2K_0}]
\end{eqnarray}
and a simple computation gives (\ref{l-infinity-less-l-2-1}).
\end{proof}

Before continuing with the proof, we explain the meaning of the lemmas that follow.
 Given $l, r>0$ and $\varsigma\in\R^{n-1}$ we let $\mathcal{C}_l^r(\varsigma)\subset\R^n$ the cylinder
\begin{eqnarray}
\mathcal{C}_l^r(\varsigma):=\{(s,\xi):-l<s<l;\,\vert \xi-\varsigma\vert<r\}.
\end{eqnarray}
Lemma \ref{lemma-1}, Lemma \ref{lemma-2} and Lemma \ref{lemma-w-q} describe successive deformations through which, fixed $\lambda>0$ and $\varrho>0$ and $\bar{q}\in(0,q^\circ)$, we transform the minimizer $u$ first into a map $v$ then into $w$ and finally into a map $w^{\bar{q}}$ that satisfies the conditions
\begin{equation}\label{conditions-w-q}
\begin{split}
& w^{\bar{q}}=u,\;\text{ on }\;\Omega\setminus\mathcal{C}_{l+\lambda}^{r+2\varrho}(\varsigma),\\
& w^{\bar{q}}(l+\frac{\lambda}{2},\xi)=\bar{u}(l+\frac{\lambda}{2}),\;\text{ for }\;\vert\xi-\varsigma\vert\leq r+\frac{\varrho}{2},\\
&\|w^{\bar{q}}(\cdot,\xi)-\bar{u}(\cdot)\|_{l+\frac{\lambda}{2}}\leq\bar{q},\;\text{ for }\;\vert\xi-\varsigma\vert\leq r+\frac{\varrho}{2}
\end{split}
\end{equation}
The deformations described in these lemmas are complemented by precise quantitative estimates on the amount of energy required for the deformation (see (iii) in Lemma \ref{lemma-1}, (iii) in Lemma \ref{lemma-2} and  (\ref{quantitative-w-wq}) in Lemma \ref{lemma-w-q}).
Lemma \ref{lemma-1} describes the deformation of $u$ into a map $v$ that  coincides with $\bar{u}$ on the lateral boundary of  $\mathcal{C}_{l+\frac{\lambda}{2}}^{r+\varrho}(\varsigma)$:
\begin{equation}\label{coincide-lateral}
\begin{split}
& v=u,\;\text{ outside }\;\mathcal{C}_{l+\lambda}^{r+2\varrho}(\varsigma)\setminus\overline{\mathcal{C}}_l^{r+2\varrho}(\varsigma)\\
&\|w (\cdot,\xi)-\bar{u}(\cdot)\|_{l+\frac{\lambda}{2}}\leq\bar{q},\;\text{ for }\;\vert\xi-\varsigma\vert= r+\frac{\varrho}{2}.
\end{split}
\end{equation}
Lemma \ref{lemma-2} describes the deformation of $v$ into a map $w$ that satisfies
\begin{equation}\label{coincide-l2}
\begin{split}
& w=v,\;\text{ outside }\;\mathcal{C}_{l+\frac{\lambda}{2}}^{r+\varrho}(\varsigma)\setminus\overline{\mathcal{C}}_{l+\frac{\lambda}{2}}^r(\varsigma)\\
&\|w (\cdot,\xi)-\bar{u}(\cdot)\|_{l+\frac{\lambda}{2}}\leq\bar{q},\;\text{ for }\;\vert\xi-\varsigma\vert= r+\frac{\varrho}{2}.
\end{split}
\end{equation}
Lemma \ref{quantitative-estimate0} and Corollary \ref{corollary} show that we can replace $w^{\bar{q}}$ with a map $\omega$ that coincides with $w^{\bar{q}}$ outside $\mathcal{C}_{l+\frac{\lambda}{2}}^{r+\frac{\varrho}{2}}(\varsigma)$ and has less energy than $w^{\bar{q}}$. Moreover Corollary \ref{corollary} yields a quantitative estimate for the energy difference.

In Sec.\ref{proof-main} we put together all these energy estimates and show (see Proposition \ref{l-2-bound}) that the assumption that
\[\|u (\cdot,\varsigma)-\bar{u}(\cdot)\|_l\geq q^\circ\]
if $r>0$ is sufficiently large, is incompatible with the minimality of $u$. Thus establishing that, if a sufficiently large cylinder $\mathcal{C}_{l+\frac{\lambda}{2}}^{r+\frac{\varrho}{2}}(\varsigma)$ is contained in $\Omega$, then we have the estimate
\[\|u (\cdot,\varsigma)-\bar{u}(\cdot)\|_l< q^\circ,\]
which is the main step in the proof of Theorem \ref{main}.

\subsection{Replacement Lemmas}\label{replacement-lemmas}

\begin{lemma}\label{lemma-1}
Let $\lambda \text{ and } \varrho>0$  be fixed. Assume that  $\mathcal{C}_{l+\lambda}^{r+2\varrho}(\varsigma)\subset\Omega$ satisfies
\begin{eqnarray}\label{distance-m-l}
d(\mathcal{C}_{l+\lambda}^{r+2\varrho}(\varsigma),\partial\Omega)\geq l+\lambda.
\end{eqnarray}
Then there exists a map $v\in C_S^{0,1}(\overline{\Omega},\R^m)$ such that
\begin{description}
\item[(i)] $v=u,\;\text{ on }\, \overline{\Omega}\setminus
(\mathcal{C}_{l+\lambda}^{r+2\varrho}(\varsigma)\setminus
\overline{\mathcal{C}}_{l}^{r+2\varrho}(\varsigma))$,
\item[(ii)] $v(l+\frac{\lambda}{2},\xi)=\bar{u}(l+\frac{\lambda}{2}),\;\text{ for }\, \vert\xi-\varsigma\vert\leq r+\varrho$.
\item[(iii)] $J_{\mathcal{C}_{l+\lambda}^{r+2\varrho}(\varsigma)}(v)-
J_{\mathcal{C}_{l+\lambda}^{r+2\varrho}(\varsigma)}(u)\leq C_0 r^{n-1}e^{-2k l}$,
\end{description}
where $C_0>0$ is a constant independent of $l$ and $r$.
\end{lemma}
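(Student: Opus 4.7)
The plan is to construct $v$ by smoothly interpolating between $u$ and $\bar{u}$ inside the cylindrical annulus
\[
\mathcal{A} \;:=\; \mathcal{C}_{l+\lambda}^{r+2\varrho}(\varsigma)\setminus\overline{\mathcal{C}}_l^{r+2\varrho}(\varsigma)
\;=\; \{(s,\xi):\; l<|s|<l+\lambda,\ |\xi-\varsigma|<r+2\varrho\},
\]
using cutoff functions in $s$ and in $\xi$ that glue the deformation to $u$ along $\partial\mathcal{A}$. Fix an even $\phi\in C^\infty(\R)$ with $\phi\equiv 0$ on $\{|s|\leq l\}\cup\{|s|\geq l+\lambda\}$ and $\phi(\pm(l+\lambda/2))=1$, and a radial $\psi\in C^\infty(\R^{n-1})$ with $\psi\equiv 1$ on $\{|\xi-\varsigma|\leq r+\varrho\}$ and $\psi\equiv 0$ outside $\{|\xi-\varsigma|<r+2\varrho\}$; the derivatives $\phi',\nabla\psi$ are bounded by constants depending only on $\lambda,\varrho$. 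Set
\[
v(x) \;:=\; u(x) + \phi(s)\,\psi(\xi)\,\bigl(\bar{u}(s)-u(x)\bigr),\qquad x=(s,\xi).
\]

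Conditions (i) and (ii) are immediate: outside $\mathcal{A}$ at least one of $\phi,\psi$ vanishes, so $v=u$; at $s=l+\lambda/2$ with $|\xi-\varsigma|\leq r+\varrho$ one has $\phi\psi=1$, whence $v=\bar{u}(l+\lambda/2)$. The symmetry $v(gx)=g v(x)$, $g\in S$, follows from the evenness of $\phi$ in $s$, the analogous symmetries of $u$ and $\bar{u}$, and linearity of the reflection; Lipschitz regularity is inherited from $u\in C_S^{0,1}(\overline{\Omega},\R^m)$ together with the smoothness of $\bar{u},\phi,\psi$.

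The crucial input for the energy estimate (iii) is pointwise decay on $\mathcal{A}$. For $(s,\xi)\in\mathcal{A}$ with $s>0$, assumption (\ref{distance-m-l}) gives $d((s,\xi),\partial\Omega^+)\geq\min(s,d((s,\xi),\partial\Omega))\geq l$, and hence by (\ref{exponential-0})--(\ref{gradu-expo}),
\[
|u(s,\xi)-a|+|\nabla u(s,\xi)|\leq K_0 e^{-k_0 l};
\]
the symmetric estimate with $a$ replaced by $\hat{a}$ holds on $\mathcal{A}\cap\{s<0\}$ by the $S$-equivariance of $u$. The standard linearization of $\bar{u}''=W_u(\bar{u})$ at the nondegenerate zero $a$ of $W_u$ produces $|\bar{u}(s)-a|+|\bar{u}'(s)|\leq K' e^{-k_0 l}$ for $s\geq l$, and symmetrically for $s\leq -l$. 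Applying Leibniz to $v-u=\phi\psi(\bar{u}-u)$ yields $|\nabla(v-u)|\leq C(\lambda,\varrho)\,e^{-k_0 l}$ throughout $\mathcal{A}$, so that
\[
\tfrac12|\nabla v|^2 - \tfrac12|\nabla u|^2 \;\leq\; |\nabla u|\,|\nabla(v-u)| + \tfrac12|\nabla(v-u)|^2 \;\leq\; C\,e^{-2k_0 l}
\]
pointwise on $\mathcal{A}$.

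The delicate step — and the main obstacle — is the potential difference, because a naive $|W(v)-W(u)|\leq C|v-u|$ gives only $e^{-k_0 l}$, losing a full exponential factor. The remedy is to exploit $W(a)=0$, $W_u(a)=0$ and $D^2W(a)>0$ (with the analogues at $\hat{a}$ supplied by ${\bf H}_1$) to extract the quadratic bound $0\leq W(z)\leq C|z-a|^2$ in a neighbourhood of $a$, valid for both $u$ and $v$ on $\mathcal{A}\cap\{s>0\}$ once $l$ is large enough, and symmetrically near $\hat{a}$ on $\mathcal{A}\cap\{s<0\}$. Hence
\[
|W(v)-W(u)|\;\leq\; W(v)+W(u)\;\leq\; C\bigl(|v-a|^2+|u-a|^2\bigr)\;\leq\; C\,e^{-2k_0 l}
\]
pointwise on $\mathcal{A}$. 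Since $|\mathcal{A}|\leq C_n\lambda(r+2\varrho)^{n-1}\leq C_0'\,r^{n-1}$ for $r$ bounded below, integration of the two pointwise bounds over $\mathcal{A}$ yields (iii) with $k=k_0$.
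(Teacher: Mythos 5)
Your proof is correct and follows essentially the same route as the paper: interpolate between $u$ and $\bar{u}$ in the cylindrical annulus using a cutoff, verify (i) and (ii) from the support/level properties of the cutoff, and obtain the $e^{-2kl}$ rate in (iii) from the exponential proximity of both $u$ and $\bar{u}$ to $a$ on the annulus together with $W(a)=0$, $W_u(a)=0$ (so that $W$ is pointwise $O(e^{-2k_0l})$ there). The only cosmetic difference is that you use a single smooth product cutoff $\phi(s)\psi(\xi)$ where the paper uses a piecewise-linear tent interpolation in $s$ coupled with an explicit $\xi$-transition built from $Bu$, $\tilde u$, $\hat v$; and the paper bounds $J_{\mathcal A}(v)-J_{\mathcal A}(u)\le J_{\mathcal A}(v)$ by dropping $J_{\mathcal A}(u)\ge 0$, whereas you bound the difference of integrands directly, both of which give the same result.
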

\begin{proof}
For $(s,\xi)\in \overline{\mathcal{C}}_{l+\lambda}^{r+\varrho}(\varsigma)
\setminus\mathcal{C}_{l}^{r+\varrho}(\varsigma)$ we define $v$ by
\begin{eqnarray}\label{v-definition-1}
v(s,\xi)=
(1-\vert 1-2\frac{s-l}{\lambda}\vert)\bar{u}(s)+ \vert 1-2\frac{s-l}{\lambda}\vert u(s,\xi),\hskip2cm\\\nonumber\hskip4cm s\in[l,l+\lambda],\, \vert \xi-\varsigma\vert\leq r+\varrho.
\end{eqnarray}
It remains to define $v(s,\xi) \text{ for } (s,\xi)\in(l,l+\lambda)\times\{\xi:r+\varrho<\vert \xi-\varsigma\vert<r+2\varrho\}$.

Set
\begin{eqnarray}
B u(s,\xi)=\vert\frac{s-l-\lambda}{\lambda}\vert u(l,\xi)+\frac{s-l}{\lambda}u(l+\lambda,\xi),\\\nonumber
\tilde{u}(s,\xi)=u(s,\xi)-B u(s,\xi).\hskip2.5cm
\end{eqnarray}
Note that by (\ref{v-definition-1}) $\vert\xi-\varsigma\vert=r+\varrho$ implies $v(l,\xi)=u(l,\xi),\; v(l+\lambda,\xi)=u(l+\lambda,\xi)$ and therefore we have
\begin{eqnarray}
\vert\xi-\varsigma\vert=r+\varrho\Rightarrow B u(s,\xi)=B v(s,\xi),
\end{eqnarray}
where $v$ is defined in (\ref{v-definition-1}). Set
\begin{eqnarray}
\hat{v}(s,\xi)=
v(s,(r+\varrho)\frac{\xi-\varsigma}{\vert\xi-\varsigma\vert}+\varsigma)-
B u(s,(r+\varrho)\frac{\xi-\varsigma}{\vert\xi-\varsigma\vert}+\varsigma),
\end{eqnarray}
where again $v$ is defined in (\ref{v-definition-1}). With these notations we complete the definition of $v$ by setting
\begin{eqnarray}\label{v-definition-2}
v(s,\xi)=B u(s,\xi)
+\frac{\vert \xi-\varsigma\vert-r-\varrho}{\varrho}\tilde{u}(s,\xi)
+\frac{2\varrho+r-\vert \xi-\varsigma\vert}{\varrho}\hat{v}(s,\xi),\\\nonumber
\text{ for } (s,\xi)\in(l,l+\lambda)\times\{\xi:r+\varrho<\vert \xi-\varsigma\vert<r+2\varrho\}.
\end{eqnarray}
Statement (i) and (ii) are obvious consequences of the definition of $v$. Direct inspection of (\ref{v-definition-1}) and (\ref{v-definition-2}) shows that $v$ is continuous. From (\ref{v-definition-1}) $v(s,\xi)$ is a linear combination of $\bar{u}(s)$ and $u(s,\xi)$ computed for $s\in[l,l+\lambda]$. A similar statement applies to $v(s,\xi)$ in (\ref{v-definition-2}) since
$B u(s,\xi),\,\hat{v}(s,\xi)$ and $\tilde{u}(s,\xi)$ are linear combinations of $u(s,\xi)$ and $v(s,\xi)$ in (\ref{v-definition-1}) computed for $s\in[l,l+\lambda]$. From this, assumption (\ref{distance-m-l}) and (\ref{v-vs-exp-bound}) we conclude
\begin{eqnarray}\label{energy-density-bound}
\vert v-a\vert+\vert\nabla v\vert\leq C_3 e^{-k_0 l} \;\,\text{ for }
(s,\xi)\in \mathcal{C}_{l+\lambda}^{r+2\varrho}(\varsigma)\setminus
\overline{\mathcal{C}}_{l}^{r+2\varrho}(\varsigma),
\end{eqnarray}
where $C_3>0$ is a constant independent of $l$ and $r$. From (\ref{energy-density-bound}) and the assumptions on the potential $W$ it follows
\begin{eqnarray}
\frac{1}{2}\nabla v\vert^2+W(v)\leq C_4e^{-2k_0 l},
\end{eqnarray}
which together with $\mathcal{H}^n(\mathcal{C}_{l+\lambda}^{r+2\varrho}(\varsigma)\setminus
\overline{\mathcal{C}}_{l}^{r+2\varrho}(\varsigma))\leq C_5 r^{n-1}$ concludes the proof.
\end{proof}

Given a number $0<\bar{q}< q^\circ$, let $A_{\bar{q}}$ be the set
\begin{eqnarray}
A_{\bar{q}}:=\{\xi: \|v(\cdot,\xi)-\bar{u}(\cdot)\|_{l+\frac{\lambda}{2}}>\bar{q},\,\vert \xi-\varsigma\vert< r+\varrho\},
\end{eqnarray}
where $v$ is the map constructed in Lemma \ref{lemma-1}.

\begin{lemma}\label{lemma-2}
Let $v$ as before and let $S:=A_{\bar{q}}\cap\{\xi: r<\vert \xi-\varsigma\vert< r+\varrho\}$. Then there is a constant $C_1>0$ independent from $l \text{ and } r$ and a map $w\in C_S^{0,1}(\overline{\Omega},\R^m)$ such that
\begin{description}
\item[(i)] $w=v \text{ on } \overline{\Omega}\setminus(\mathcal{C}_{l+\frac{\lambda}{2}}^{r+\varrho}(\varsigma)
    \setminus\overline{\mathcal{C}}_{l+\frac{\lambda}{2}}^{r}(\varsigma))$
\item[(ii)] $\| w-\bar{u}\|_{l+\frac{\lambda}{2}}\leq \bar{q}, \text{ for } \vert\xi-\varsigma\vert=r+\frac{\varrho}{2}.$
\item[(iii)] $J_{\mathcal{C}_{l+\frac{\lambda}{2}}^{r+\varrho}(\varsigma)
    \setminus\overline{\mathcal{C}}_{l+\frac{\lambda}{2}}^{r}(\varsigma)}(w)-
J_{\mathcal{C}_{l+\frac{\lambda}{2}}^{r+\varrho}(\varsigma)
    \setminus\overline{\mathcal{C}}_{l+\frac{\lambda}{2}}^{r}(\varsigma)}(v)    \leq C_1\mathcal{H}^{n-1}(S)$.
    \end{description}
\end{lemma}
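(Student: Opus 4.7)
My plan is to build $w$ from $v$ by a radial interpolation in $\rho=|\xi-\varsigma|$ with a ``normalized'' copy $\tilde v$ of $v$ whose $L^2_s$-distance to $\bar u$ never exceeds $\bar q$. Set
\[
f(\xi):=\min\Bigl(1,\tfrac{\bar q}{\|v(\cdot,\xi)-\bar u\|_{l+\lambda/2}}\Bigr),\qquad \tilde v(s,\xi):=\bar u(s)+f(\xi)\bigl(v(s,\xi)-\bar u(s)\bigr),
\]
so that $\|\tilde v(\cdot,\xi)-\bar u\|_{l+\lambda/2}\leq\bar q$ everywhere (with equality precisely on $A_{\bar q}$) and $\tilde v=v$ off $A_{\bar q}$. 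In the slab $\{r\leq\rho\leq r+\varrho\}$ put $w=\alpha(\rho)v+(1-\alpha(\rho))\tilde v$, where $\alpha$ is the continuous piecewise-linear tent with $\alpha(r)=\alpha(r+\varrho)=1$ and $\alpha(r+\varrho/2)=0$, so $|\alpha'|=2/\varrho$; outside the slab set $w=v$.

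Properties (i) and (ii) are immediate. On the lateral faces $\rho\in\{r,r+\varrho\}$ the interpolation collapses to $v$; on the end faces $s=\pm(l+\lambda/2)$, Lemma \ref{lemma-1}(ii) and symmetry force $v=\bar u$, hence also $\tilde v=\bar u=v$, so $w=v$ on the whole boundary of the slab. At $\rho=r+\varrho/2$ one has $w=\tilde v$, whose $L^2_s$-distance to $\bar u$ is at most $\bar q$. Symmetry of $w$ is inherited from $v$ and $\bar u$ since $f$ depends only on $\xi$, and Lipschitz regularity follows from $v,\bar u\in C_S^{0,1}(\overline\Omega,\R^m)$ together with the fact that $\min(1,\cdot)$ composed with the Lipschitz map $\xi\mapsto\|v(\cdot,\xi)-\bar u\|_{l+\lambda/2}$ is itself Lipschitz.

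For (iii), observe that $\tilde v=v$ off $A_{\bar q}$, so the modification is supported in $S\times[-(l+\lambda/2),l+\lambda/2]$. It therefore suffices to show that for every $\xi\in S$,
\[
\int_{-(l+\lambda/2)}^{l+\lambda/2}\Bigl(\tfrac12|\nabla w|^2+W(w)-\tfrac12|\nabla v|^2-W(v)\Bigr)\,ds\leq C_1,
\]
with $C_1$ independent of $l,r,\xi$, and then integrate over $\xi\in S$. The bounds (\ref{bounds}) and membership in $C_S^{0,1}$ give the pointwise decay $|v-a|+|\nabla v|+|\bar u-a|+|\bar u'|\leq K_0 e^{-k_0|s|}$ after symmetrizing. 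This transfers to $|\tilde v-a|$ and $|\partial_s\tilde v|$ because $\tilde v-\bar u=f(v-\bar u)$ with $0\leq f\leq 1$, and to $|\nabla_\xi\tilde v|=|\nabla_\xi f\cdot(v-\bar u)+f\nabla_\xi v|$ because on $A_{\bar q}$, Cauchy--Schwarz applied to $\nabla_\xi\|v-\bar u\|^2_{l+\lambda/2}$ together with $\|v-\bar u\|_{l+\lambda/2}>\bar q$ yields $|\nabla_\xi f|\leq\bar q^{-1}\|\nabla_\xi v\|_{L^2_s}\leq C$. Combined with $|v-\tilde v|=(1-f)|v-\bar u|\leq 2K_0 e^{-k_0|s|}$ and $|\alpha'|\leq 2/\varrho$, the gradient of $w$ also satisfies an $e^{-k_0|s|}$ bound. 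Since $W\in C^3$ gives $|W(w)-W(v)|\leq C|w-v|\leq C'e^{-k_0|s|}$ on the fixed $L^\infty$-ball containing all fields, the integrand is pointwise dominated by a constant multiple of $e^{-k_0|s|}$, which is integrable with bound uniform in $l,r,\xi$.

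The main technical obstacle is controlling $\nabla_\xi\tilde v$, which enters through $\nabla_\xi f$ and could a priori blow up. The construction avoids this because on $S\subset A_{\bar q}$ the denominator $\|v(\cdot,\xi)-\bar u\|_{l+\lambda/2}$ is bounded below by $\bar q$, yielding the uniform Lipschitz control of $f$ used above. The remaining estimates are a routine matching of exponential decay rates; the resulting $C_1$ depends on $\lambda,\varrho,\bar q,M,K_0,k_0$ and $W$ but not on $l$ or $r$.
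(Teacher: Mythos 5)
Your construction coincides with the paper's: writing $w=\alpha v+(1-\alpha)\tilde v$ with $\alpha$ the tent in $\rho=|\xi-\varsigma|$ and $\tilde v=\bar u+\min(1,\bar q/q^v)(v-\bar u)$ is algebraically the same as the paper's $w=\bar u+q^w\nu^v$ with $q^w=(1-\alpha)\bar q+\alpha q^v$, and the energy bound is obtained the same way, namely by noting the modification is supported over $S$, using $q^v>\bar q$ on $A_{\bar q}$ to control the gradient of the normalization factor, and invoking the exponential decay of $v$ to bound each fiber integral by a constant independent of $l,r$. The proposal is correct and takes essentially the same approach as the paper.
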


\begin{proof}
Set
\begin{equation}\label{qv-and-nuv}
\begin{split}
& q^v(\xi)=\|v(\cdot,\xi)-\bar{u}(\cdot)\|_{l+\frac{\lambda}{2}},\\
& \nu^v(s,\xi)=\frac{v(s,\xi)-\bar{u}(s)}{q^v(\xi)},
\end{split}\text{ for }\;s\in(-l-\frac{\lambda}{2},l+\frac{\lambda}{2}),\;\xi\in S.
\end{equation}
and, for $s\in(-l-\frac{\lambda}{2},l+\frac{\lambda}{2}),\;\xi\in S$, define
\begin{equation}\label{w}
\begin{split}
& w(s,\xi)=\bar{u}(s)+q^w(\xi)\nu^v(s,\xi),\\
& q^w(\xi)=(1-\vert 1-2\frac{\vert\xi-\varsigma\vert-r}{\varrho}\vert)\bar{q}+
\vert 1-2\frac{\vert\xi-\varsigma\vert-r}{\varrho}\vert q^v(\xi).
\end{split}
\end{equation}
%\begin{eqnarray}\label{v-q-bar}
% v^{\bar{q}}(s,\xi):=\bar{u}(s)+\bar{q}\frac{v(s,\xi)-\bar{u}(s)}
%{\langle v-\bar{u}, v-\bar{u}\rangle_{l+\frac{\lambda}{2}}^{\frac{1}{2}}}, \text{ for } %s\in(-l-\frac{\lambda}{2},l+\frac{\lambda}{2}),\;\xi\in S,
%\end{eqnarray}
%and
%\begin{eqnarray}\label{w}
%w(s,\xi):=(1-\vert 1-2\frac{\vert\xi-\varsigma\vert-r}{\varrho}\vert) v^{\bar{q}}(s,\xi)+
%\vert 1-2\frac{\vert\xi-\varsigma\vert-r}{\varrho}\vert v(s,\xi),\\\nonumber
%\quad\quad\quad \text{ for } s\in(-l-\frac{\lambda}{2},l+\frac{\lambda}{2}),\;\xi\in S.\
%\end{eqnarray}
From this definition it follows that $w$ coincides with $v=\bar{u}+q^v\nu^v$ if $\vert\xi-\varsigma\vert=r$ or $\vert\xi-\varsigma\vert=r+\varrho$ or $q^v=\bar{q}$. This shows that $w$ coincides with $v$ on the boundary of the set $(-l-\frac{\lambda}{2},l+\frac{\lambda}{2})\times S$ and proves (i). From (\ref{w}) also follows that $q^w=\bar{q}$ for $\vert\xi-\varsigma\vert=r+\frac{\varrho}{2}$ for $\xi\in S$. This and the definition of $S$ imply (ii). To prove (iii)
we note that
\begin{eqnarray}\label{w-bar-u}
\vert w-\bar{u}\vert=\vert q^w\nu^v\vert\leq\vert q^v\nu^v\vert=\vert v-\bar{u}\vert, \text{ for } s\in(-l-\frac{\lambda}{2},l+\frac{\lambda}{2}),\;\xi\in S.
\end{eqnarray}
which implies
\begin{eqnarray}
\vert w-a\vert\leq Ke^{-k s},\;\text{ for }\;s\in(0,l+\frac{\lambda}{2}),\;\xi\in S.
\end{eqnarray}
Therefore we have
\begin{eqnarray}\label{potential-bound}
\int_{-l-\frac{\lambda}{2}}^{l+\frac{\lambda}{2}}(W(w)-W(v))
\leq\int_{-l-\frac{\lambda}{2}}^{l+\frac{\lambda}{2}}W(w)\leq C,
 \text{ for } \xi\in S.
\end{eqnarray}
We can write
\[w=\frac{q^w}{q^v}(v-\bar{u}),\;\text{ for }\;s\in(0,l+\frac{\lambda}{2}),\;\xi\in S\]
therefore we have, using also (\ref{energy-density-bound})
\begin{equation}\label{computation}
\begin{split}
& w_s=\frac{q^w}{q^v}(v_s-\bar{u}_s)\;\Rightarrow\;\vert w_s\vert\leq K e^{-k\vert s\vert},\\
& w_{\xi_j}=(\frac{q^w}{q^v})_{\xi_j}(v-\bar{u})+\frac{q^w}{q^v}v_{\xi_j}.
\end{split}
\end{equation}
From $q^v_{\xi_j}=\langle \nu^v,v_{\xi_j}\rangle_{l+\frac{\lambda}{2}}$ and (\ref{w}) it follows
\begin{equation}\label{computation1}
\begin{split}
& (\frac{q^w}{q^v})_{\xi_j}=\vert 1-2\frac{\vert\xi-\varsigma\vert-r}{\varrho}\vert_{\xi_j}(1-\frac{\bar{q}}{q^v})
-(1-\vert 1-2\frac{\vert\xi-\varsigma\vert-r}{\varrho}\vert)\frac{\bar{q}}{(q^v)^2}\langle \nu^v,v_{\xi_j}\rangle_{l+\frac{\lambda}{2}},\\
& \Rightarrow\;\vert (\frac{q^w}{q^v})_{\xi_j}\vert\leq\frac{2}{\varrho}+\frac{1}{q^v}\|v_{\xi_j}\|_{l+\frac{\lambda}{2}}.
\end{split}
\end{equation}
where we have also used $\frac{\bar{q}}{q^v}\leq 1$ for $\xi\in S$. From (\ref{computation1}) and (\ref{computation1}) it follows
\[\vert w_{\xi_j}\vert\leq(\frac{2}{\varrho}+\frac{\|v_{\xi_j}\|_{l+\frac{\lambda}{2}}}{\bar{q}})\vert v-\bar{u}\vert+
\vert v_{\xi_j}\vert\leq K e^{-k\vert,\;\text{ for }\; s\vert}s\in(-l-\frac{\lambda}{2},l+\frac{\lambda}{2}),\;\xi\in S,\]
where we have also used (\ref{energy-density-bound}).
From this  and (\ref{computation}) we conclude
\begin{eqnarray}
\int_{-l-\frac{\lambda}{2}}^{l+\frac{\lambda}{2}}(\vert\nabla w\vert^2-\vert\nabla v\vert^2)\leq
\int_{-l-\frac{\lambda}{2}}^{l+\frac{\lambda}{2}}\vert\nabla w\vert^2\leq C,
 \text{ for } \xi\in S.
\end{eqnarray}
This inequality together with (\ref{potential-bound}) conclude the proof.
\end{proof}
\begin{lemma}\label{lemma-w-q}
Let $w$ the map constructed in Lemma \ref{lemma-2}. Define $w^{\bar{q}}$ by setting
\begin{eqnarray}
w^{\bar{q}}=\left\{\begin{array}{l}
\bar{u}+\bar{q}\nu^v, \text{ for } (s,\xi)\in\mathcal{C}_{l+\frac{\lambda}{2}}^{r+\frac{\varrho}{2}}(\varsigma),\;\xi\in A_{\bar{q}},\\\\
w, \text{ for } (s,\xi)\in\mathcal{C}_{l+\frac{\lambda}{2}}^{r+\frac{\varrho}{2}}(\varsigma),\;\xi\not\in A_{\bar{q}},
 \text{ and  for } (s,\xi)\not\in\mathcal{C}_{l+\frac{\lambda}{2}}^{r+\frac{\varrho}{2}}(\varsigma).
\end{array}\right.
\end{eqnarray}
Then $w^{\bar{q}}\in C_S^{0,1}(\overline{\Omega},\R^m)$ and
\begin{eqnarray}\label{quantitative-w-wq}
J_{\mathcal{C}_{l+\frac{\lambda}{2}}^{r+\frac{\varrho}{2}}(\varsigma)}(w^{\bar{q}})
-J_{\mathcal{C}_{l+\frac{\lambda}{2}}^{r+\frac{\varrho}{2}}(\varsigma)}(w)\leq 0.
\end{eqnarray}
\end{lemma}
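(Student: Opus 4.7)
I would argue fiber-by-fiber in the transverse variable $\xi$, exploiting that on the region of modification $A_{\bar{q}} \cap \{\vert\xi-\varsigma\vert \leq r+\varrho/2\}$ the map $w$ has the form $w = \bar{u} + q^w(\xi)\,\nu^v(\cdot,\xi)$ with $q^w(\xi) \geq \bar{q}$ (the latter because, on $A_{\bar{q}}$, $q^v \geq \bar{q}$ and $q^w$ is a convex combination of $\bar{q}$ and $q^v$). First I would verify that $w^{\bar{q}}\in C_S^{0,1}(\overline{\Omega},\R^m)$: away from interfaces regularity is immediate, and continuity across both interfaces is automatic. At $\vert\xi-\varsigma\vert = r+\varrho/2$ the definition of $q^w$ in (\ref{w}) gives $q^w = \bar{q}$, hence $w = \bar{u}+\bar{q}\nu^v = w^{\bar{q}}$; along $\partial A_{\bar{q}}$ we have $q^v = \bar{q}$, so $v = \bar{u}+\bar{q}\nu^v$ and again the two definitions agree. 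Since $q^v \geq \bar{q} > 0$ on $A_{\bar{q}}$, the factor $\nu^v = (v-\bar{u})/q^v$ stays Lipschitz.

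Next, I would split the energy density into its one-dimensional slice part $\tfrac{1}{2}\vert u_s\vert^2 + W(u)$ and its transverse part $\tfrac{1}{2}\sum_j \vert u_{\xi_j}\vert^2$, and show that for each $\xi\in A_{\bar{q}}$ both decrease when $w$ is replaced by $w^{\bar{q}}$. Writing $L = l + \lambda/2$, the fiber-wise slice integral for $w$ equals
\[\int_{-L}^L \left(\tfrac{1}{2}\vert \bar{u}_s\vert^2 + W(\bar{u})\right) ds + {\bf e}_L\bigl(q^w(\xi)\nu^v(\cdot,\xi)\bigr),\]
and the analogous identity holds for $w^{\bar{q}}$ with $q^w(\xi)$ replaced by $\bar{q}$. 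Since $\nu^v(\cdot,\xi) \in \SF$ and $q^w(\xi) \geq \bar{q}$, the desired inequality ${\bf e}_L(\bar{q}\,\nu^v) \leq {\bf e}_L(q^w\,\nu^v)$ follows from Lemma~\ref{strict-minimizer}: on $[0,q^\circ]$ the map $q\mapsto{\bf e}_L(q\nu^v)$ is strictly convex with critical point $q=0$ (where $\bar{u}$ is a minimizer), hence monotone increasing by (\ref{iota-properties})$_1$; and when $q^w > q^\circ$ the plateau estimate (\ref{iota-properties})$_2$ yields ${\bf e}_L(q^w\nu^v) \geq {\bf e}_L(q^\circ\nu^v) \geq {\bf e}_L(\bar{q}\nu^v)$.

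For the transverse part the crucial algebraic cancellation comes from $\|\nu^v(\cdot,\xi)\|_L \equiv 1$, which upon differentiation in $\xi_j$ gives $\int_{-L}^L (\nu^v, \nu^v_{\xi_j}) ds = 0$. Writing $w_{\xi_j} = q^w_{\xi_j}\nu^v + q^w \nu^v_{\xi_j}$ and squaring, the cross term integrates to zero over $s$, so
\[\int_{-L}^L \vert w_{\xi_j}\vert^2 ds = (q^w_{\xi_j})^2 + (q^w)^2 \int_{-L}^L\vert\nu^v_{\xi_j}\vert^2 ds,\]
while the analog for $w^{\bar{q}}$ (with $\bar{q}$ constant in $\xi$) is $\bar{q}^2\int_{-L}^L\vert\nu^v_{\xi_j}\vert^2 ds$. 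Since $q^w(\xi) \geq \bar{q}$ and $(q^w_{\xi_j})^2 \geq 0$, the transverse gradient integral also decreases. Integrating both pointwise inequalities over $A_{\bar{q}} \cap\{\vert\xi-\varsigma\vert \leq r+\varrho/2\}$ yields (\ref{quantitative-w-wq}). The main obstacle is the slice estimate, where one must carefully separate the regimes $q^w \leq q^\circ$ and $q^w > q^\circ$ and invoke the appropriate part of Lemma~\ref{strict-minimizer} in each case; the transverse estimate, by contrast, is essentially algebraic and hinges on the unit $L^2$-normalization of $\nu^v$.
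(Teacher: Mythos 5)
Your proposal is correct and follows essentially the same route as the paper: decompose the energy difference fiber-by-fiber into the slice contribution, which reduces via the identity $\int_{-L}^L(\tfrac12|w_s|^2+W(w))\,ds=\int_{-L}^L(\tfrac12|\bar u_s|^2+W(\bar u))\,ds+{\bf e}_L(q^w\nu^v)$ to the monotonicity of $q\mapsto{\bf e}_L(q\nu^v)$ furnished by Lemma~\ref{strict-minimizer}, plus the transverse contribution, which decreases because the normalization $\|\nu^v(\cdot,\xi)\|_L\equiv 1$ forces $\langle\nu^v,\nu^v_{\xi_j}\rangle_L=0$ and hence $\int|w_{\xi_j}|^2=(q^w_{\xi_j})^2+(q^w)^2\int|\nu^v_{\xi_j}|^2\geq\bar q^2\int|\nu^v_{\xi_j}|^2=\int|w^{\bar q}_{\xi_j}|^2$. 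Your only additions beyond the paper's (terser) argument are the explicit Lipschitz-continuity check for $w^{\bar q}$ and the spelled-out case split $q^w\leq q^\circ$ versus $q^w>q^\circ$ when invoking (\ref{iota-properties})$_1$ and (\ref{iota-properties})$_2$; both are correct and merely make explicit what the paper leaves implicit.
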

\begin{proof}
We have $w-\bar{u}=q^w\nu^w$ and $q^w>\bar{q}$ on $A_{\bar{q}}$. Therefore, recalling the definition of ${\bf e}_l$ and Lemma \ref{strict-minimizer} we have
\begin{eqnarray}\label{difference-w-wq}
J_{\mathcal{C}_{l+\frac{\lambda}{2}}^{r+\frac{\varrho}{2}}(\varsigma)}(w^{\bar{q}})
-J_{\mathcal{C}_{l+\frac{\lambda}{2}}^{r+\frac{\varrho}{2}}(\varsigma)}(w)&=& \int_{\tilde{A}_{\bar{q}}}
({\bf e}_{l+\frac{\lambda}{2}}(\bar{q}\nu^w)-{\bf e}_{l+\frac{\lambda}{2}}(q^w\nu^w))d\xi\\\nonumber
&&+
\frac{1}{2}
\sum_j\int_{\tilde{A}_{\bar{q}}}(\langle w^{\bar{q}}_{\xi_j},w^{\bar{q}}_{\xi_j}\rangle_{l+\frac{\lambda}{2}}-
\langle w_{\xi_j},w_{\xi_j}\rangle_{l+\frac{\lambda}{2}})d\xi\\\nonumber
&\leq&
\frac{1}{2}
\sum_j\int_{\tilde{A}_{\bar{q}}}(\langle w^{\bar{q}}_{\xi_j},w^{\bar{q}}_{\xi_j}\rangle_{l+\frac{\lambda}{2}}-
\langle w_{\xi_j},w_{\xi_j}\rangle_{l+\frac{\lambda}{2}})d\xi,
\end{eqnarray}
To conclude the proof we note that for $\xi\in\tilde{A}_{\bar{q}}$
\begin{equation}\label{wj-expressions}
\begin{split}
& w_{\xi_j}^{\bar{q}}=\bar{q}\nu_{\xi_j}^v,\;\Rightarrow\;\langle w_{\xi_j}^{\bar{q}},w_{\xi_j}^{\bar{q}}\rangle_{l+\frac{\lambda}{2}}=\bar{q}^2\langle \nu_{\xi_j}^v,\nu_{\xi_j}^v\rangle_{l+\frac{\lambda}{2}},\\
& w_{\xi_j}=q_{\xi_j}^w\nu+q^w\nu_{\xi_j}^v,\;\Rightarrow\;\langle w_{\xi_j},w_{\xi_j}\rangle_{l+\frac{\lambda}{2}}= (q_{\xi_j}^w)^2+(q^w)^2\langle \nu_{\xi_j}^v,\nu_{\xi_j}^v\rangle_{l+\frac{\lambda}{2}}
\end{split}
\end{equation}
where we have also used that $\langle \nu^v,\nu_{\xi_j}^v\rangle_{l+\frac{\lambda}{2}}=0$. Form (\ref{wj-expressions}) it follows
\[\langle w^{\bar{q}}_{\xi_j},w^{\bar{q}}_{\xi_j}\rangle_{l+\frac{\lambda}{2}}-
\langle w_{\xi_j},w_{\xi_j}\rangle_{l+\frac{\lambda}{2}}=-(q_{\xi_j}^v)^2+(\bar{q}^2-(q^w)^2)\langle \nu_{\xi_j}^v,\nu_{\xi_j}^v\rangle_{l+\frac{\lambda}{2}}\leq 0,\]
for $\xi\in\tilde{A}_{\bar{q}}$. This and (\ref{difference-w-wq}) prove (\ref{quantitative-w-wq}).
 \end{proof}
Next we show that we can associate to $w^{\bar{q}}$ a map $\omega$ which coincides with $w^{\bar{q}}$ on $\Omega\setminus \mathcal{C}_{l+\frac{\lambda}{2}}^{r+\frac{\varrho}{2}}(\varsigma)$ and has less energy than $w^{\bar{q}}$. Moreover we derive a quantitative estimate of the energy difference. We follow closely the argument in \cite{fu}. First we observe that, if we define $q^\ast:=q^{w^{\bar{q}}}$, we can represent $J_{\mathcal{C}_{l+\frac{\lambda}{2}}^{r+\frac{\varrho}{2}}(\varsigma)}(w^{\bar{q}})$ in the {\it polar} form
\begin{eqnarray}
J_{\mathcal{C}_{l+\frac{\lambda}{2}}^{r+\frac{\varrho}{2}}(\varsigma)}(w^{\bar{q}})-
 J_{\mathcal{C}_{l+\frac{\lambda}{2}}^{r+\frac{\varrho}{2}}(\varsigma)}(\bar{u})
 \hskip7cm\\\nonumber
\hskip2cm=
\int_{B_{\varsigma,r +\frac{\varrho}{2}}\cap\{q^\ast>0\}}\frac{1}{2}(\vert\nabla q^\ast\vert^2+{q^\ast}^2\sum_j\langle \nu_{\xi_j}^w,\nu_{\xi_j}^w\rangle_{l+\frac{\lambda}{2}})+{\bf e}_{l+\frac{\lambda}{2}}(q^\ast\nu^w).
\end{eqnarray}
This follows from $\nu^w=\nu^v$ and from $\langle \nu^v,\nu_{\xi_j}^v\rangle_{l+\frac{\lambda}{2}}=0$ that implies
\[\sum_j\langle w_{\xi_j}^{\bar{q}},w_{\xi_j}^{\bar{q}}\rangle_{l+\frac{\lambda}{2}}=\vert\nabla q^\ast\vert^2+{q^\ast}^2\sum_j\langle \nu_{\xi_j}^w,\nu_{\xi_j}^w\rangle_{l+\frac{\lambda}{2}}\]
and from the definition of ${\bf e}_l$ in Lemma \ref{strict-minimizer}.
We remark that the definition of $q^\ast \text{ and } w^{\bar{q}}$ imply
\begin{eqnarray}
q^\ast&\leq& \bar{q}, \text{ on } B_{\varsigma,r +\frac{\varrho}{2}},\\\nonumber
q^\ast&=& \bar{q}, \text{ on } A_{\bar{q}}\cap B_{\varsigma,r +\frac{\varrho}{2}}. \end{eqnarray}

\begin{lemma}\label{quantitative-estimate0}
Let $\varphi:B_{\varsigma,r +\frac{\varrho}{2}}\rightarrow\R$ the solution of
\begin{eqnarray}\label{phi-comparison}
\left\{\begin{array}{l}
\Delta\varphi=c^2\varphi, \text{ in } B_{\varsigma,r +\frac{\varrho}{2}}\\
\varphi=\bar{q}, \text{ on } \partial B_{\varsigma,r +\frac{\varrho}{2}}.
\end{array}\right.
\end{eqnarray}
Then there is a map $\omega\in C_S^{0,1}(\overline{\Omega},\R^m)$ with the following properties
\begin{eqnarray}
\left\{\begin{array}{l}
\omega=w^{\bar{q}},\; \text{ on }\; \Omega\setminus \mathcal{C}_{l+\frac{\lambda}{2}}^{r+\frac{\varrho}{2}}(\varsigma),\\\\
\omega=q^\omega\nu^w+ \bar{u},\; \text{ on }\; \mathcal{C}_{l+\frac{\lambda}{2}}^{r+\frac{\varrho}{2}}(\varsigma),\\\\
q^\omega\leq\varphi\leq\bar{q},\;  \text{ on }\;  \mathcal{C}_{l+\frac{\lambda}{2}}^{r+\frac{\varrho}{2}}(\varsigma).
\end{array}\right.
\end{eqnarray}
Moreover
\begin{eqnarray}\label{quantitative-estimate}
J_{\mathcal{C}_{l+\frac{\lambda}{2}}^{r+\frac{\varrho}{2}}(\varsigma)}( w^{\bar{q}})-
J_{\mathcal{C}_{l+\frac{\lambda}{2}}^{r+\frac{\varrho}{2}}(\varsigma)}(\omega)\hskip7.5cm
\\\nonumber\hskip2cm\geq
\int_{B_{\varsigma,r +\frac{\varrho}{2}}\cap\{q^\ast>\varphi\}}
({\bf e}_{l+\frac{\lambda}{2}}(q^\ast\nu^w)-{\bf e}_{l+\frac{\lambda}{2}}(\varphi\nu^w)
-D_q{\bf e}_{l+\frac{\lambda}{2}}(\varphi\nu^w)(q^\ast-\varphi))d\xi.
\end{eqnarray}
\end{lemma}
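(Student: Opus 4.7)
The strategy is to truncate the magnitude of $w^{\bar q}$ against the comparison function $\varphi$. Concretely, inside $\mathcal{C}_{l+\frac{\lambda}{2}}^{r+\frac{\varrho}{2}}(\varsigma)$ I would set $q^\omega(\xi):=\min\{q^\ast(\xi),\varphi(\xi)\}$ and define $\omega(s,\xi):=\bar u(s)+q^\omega(\xi)\nu^w(s,\xi)$, extending by $\omega=w^{\bar q}$ outside the cylinder. Since $q^\ast\le\bar q=\varphi$ on $\partial B_{\varsigma,r+\frac{\varrho}{2}}$ and the minimum of two Lipschitz functions is Lipschitz, this produces an admissible symmetric competitor $\omega\in C^{0,1}_S(\overline\Omega,\R^m)$ satisfying $q^\omega\le\varphi\le\bar q$. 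The direction $\nu^w$ is unchanged, so the polar representation displayed immediately before the lemma applies with $\nu^w$ fixed and only $q^\omega$ replacing $q^\ast$; only the set $B\cap\{q^\ast>\varphi\}$, with $B:=B_{\varsigma,r+\frac{\varrho}{2}}$, contributes to $J(w^{\bar q})-J(\omega)$, which equals
\begin{equation*}
\int_{B\cap\{q^\ast>\varphi\}}\Bigl[\tfrac12(|\nabla q^\ast|^2-|\nabla\varphi|^2)+\tfrac12((q^\ast)^2-\varphi^2)\sum_j\langle\nu^w_{\xi_j},\nu^w_{\xi_j}\rangle_{l+\frac{\lambda}{2}}+{\bf e}(q^\ast\nu^w)-{\bf e}(\varphi\nu^w)\Bigr]d\xi.
\end{equation*}

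The angular term is non-negative on the integration set. For the Dirichlet piece I would combine the elementary identity $\tfrac12a^2-\tfrac12b^2=\tfrac12(a-b)^2+b(a-b)$ with
\begin{equation*}
\int_{\{q^\ast>\varphi\}}\nabla\varphi\cdot\nabla(q^\ast-\varphi)\,d\xi=-c^2\int_{\{q^\ast>\varphi\}}\varphi(q^\ast-\varphi)\,d\xi,
\end{equation*}
obtained by testing the weak formulation of \eqref{phi-comparison} against $(q^\ast-\varphi)_+$, which lies in $W^{1,2}_0(B)$ since $\varphi=\bar q\ge q^\ast$ on $\partial B$. This yields the pointwise lower bound $\tfrac12(|\nabla q^\ast|^2-|\nabla\varphi|^2)\ge\nabla\varphi\cdot\nabla(q^\ast-\varphi)$ on $\{q^\ast>\varphi\}$, whence $\int_{\{q^\ast>\varphi\}}\tfrac12(|\nabla q^\ast|^2-|\nabla\varphi|^2)\,d\xi\ge -c^2\int_{\{q^\ast>\varphi\}}\varphi(q^\ast-\varphi)\,d\xi$.

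It then remains to relate ${\bf e}(q^\ast\nu^w)-{\bf e}(\varphi\nu^w)$ to its first-order Taylor remainder at $\varphi$, as required by \eqref{quantitative-estimate}. From Lemma \ref{strict-minimizer} and the fact that $\bar u$ is a minimizer of ${\bf e}_{l+\frac{\lambda}{2}}$ (so that $D_q{\bf e}_{l+\frac{\lambda}{2}}(0)=0$), integrating $D_{qq}{\bf e}_{l+\frac{\lambda}{2}}(q\nu^w)\ge c^2$ from $0$ to $\varphi\le\bar q\le q^\circ$ gives $D_q{\bf e}_{l+\frac{\lambda}{2}}(\varphi\nu^w)\ge c^2\varphi$. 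Splitting
\begin{equation*}
{\bf e}(q^\ast\nu^w)-{\bf e}(\varphi\nu^w)=\bigl[{\bf e}(q^\ast\nu^w)-{\bf e}(\varphi\nu^w)-D_q{\bf e}(\varphi\nu^w)(q^\ast-\varphi)\bigr]+D_q{\bf e}(\varphi\nu^w)(q^\ast-\varphi)
\end{equation*}
and bounding $D_q{\bf e}(\varphi\nu^w)(q^\ast-\varphi)\ge c^2\varphi(q^\ast-\varphi)$, the last term cancels exactly the negative Dirichlet contribution, and the Taylor remainder that survives is precisely the integrand on the right-hand side of \eqref{quantitative-estimate}.

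The step I expect to require most care is the integration by parts on the possibly irregular super-level set $\{q^\ast>\varphi\}$, which is avoided by working directly with $(q^\ast-\varphi)_+\in W^{1,2}_0(B)$ in the weak formulation of \eqref{phi-comparison}. A secondary point is verifying the $G$-symmetry of $\omega$, but this is automatic since $q^\omega$ depends only on $\xi$ and $\nu^w$ inherits the correct symmetry from $w^{\bar q}$.
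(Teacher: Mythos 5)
Your construction is correct, and it is genuinely more direct than the paper's. You set $q^\omega:=\min\{q^\ast,\varphi\}$ and then exploit two ingredients that make the estimate drop out: (a) testing the weak form of $\Delta\varphi=c^2\varphi$ against $(q^\ast-\varphi)_+\in W^{1,2}_0(B_{\varsigma,r+\varrho/2})$ (admissible since $q^\ast\le\bar q=\varphi$ on the boundary sphere), which turns the cross term $\nabla\varphi\cdot\nabla(q^\ast-\varphi)$ into $-c^2\varphi(q^\ast-\varphi)$; and (b) the bound $D_q{\bf e}_{l+\lambda/2}(\varphi\nu^w)\ge c^2\varphi$, obtained by integrating $D_{qq}{\bf e}\ge c^2$ from $0$ to $\varphi$ using $D_q{\bf e}(0)=0$, which cancels that negative contribution. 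These two cancel, leaving exactly the first-order Taylor remainder on $\{q^\ast>\varphi\}$, which is the right-hand side of \eqref{quantitative-estimate}. To make the integration of $D_{qq}{\bf e}$ legitimate you should note that on $\{q^\ast>\varphi\}$ one has $\varphi<q^\ast\le q_{\nu^w}$ and $\varphi\le\bar q<q^\circ$, so the hypothesis $q\in[0,q^\circ]\cap[0,q_\nu]$ of $(\ref{iota-properties})_1$ is satisfied along the whole segment $[0,\varphi]$.

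The paper proceeds differently: it fixes a sublevel parameter $b$, introduces the reduced one-dimensional action $\mathcal{J}_{A_b}$ from \eqref{reduced-action}, takes its minimizer $p^\ast\in q^\ast+W_0^{1,2}(A_b)$, uses the resulting Euler--Lagrange identity \eqref{rho-variation0} with $\gamma=(q^\ast-p^\ast)^+$ to get an exact cancellation of the Dirichlet cross term against $D_q{\bf e}(p^\ast\nu^w)(q^\ast-p^\ast)$, and then separately proves $p^\ast\le\varphi$ via a weak maximum principle argument \eqref{rho-variation2} before invoking the monotonicity $(\ref{iota-properties})_4$ to pass from $p^\ast$ to $\varphi$ in the Taylor remainder. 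Your route avoids the auxiliary minimizer and the maximum-principle comparison altogether, at the cost of replacing the exact Euler--Lagrange cancellation by the inequality $D_q{\bf e}(\varphi\nu^w)\ge c^2\varphi$ --- which is available because $\bar u$ is a critical point and $(\ref{iota-properties})_1$ holds. Both give the same conclusion; yours is shorter and arguably cleaner; the paper's, by introducing $p^\ast$, keeps the argument closer to its template in \cite{fu} and only uses the convexity of $\bf e$ in the final passage. One minor point you might make explicit: $q^\omega$ is Lipschitz since $q^\ast$ (being the $L^2$-norm in $s$ of a Lipschitz family) and $\varphi$ (smooth up to the boundary) are, so $\omega\in C^{0,1}_S(\overline\Omega,\R^m)$ as required.
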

\begin{proof}
 Let $b>0$,  $b\leq\min_{\xi\in B_{\varsigma,r +\frac{\varrho}{2}}}\varphi$ be fixed and let $A_b\subset B_{\varsigma,r +\frac{\varrho}{2}}$  the set $A_b:=\{\xi\in B_{\varsigma,r +\frac{\varrho}{2}}:q^\ast>b\}$.  $A_b$ is an open set since $w^{\bar{q}}=\bar{u}+q^\ast\nu^w$ is continuous by construction. Let
\begin{eqnarray}\label{reduced-action}
\mathcal{J}_{A_b}(p)=\int_{A_b}(\frac{1}{2}\vert\nabla p\vert^2+{\bf e}_{l+\frac{\lambda}{2}}(\vert p\vert\nu^w))d\xi,
\end{eqnarray}
Since $A_b$ is open and $q^\ast\in L^\infty(A_b,\R)$ there exists a minimizer $p^\ast\in q^\ast+W_0^{1,2}(A_b,\R)$ of the problem
\begin{eqnarray}
\mathcal{J}_{A_b}(p^\ast)=\min_{q^\ast+W_0^{1,2}(A_b,\R)}{\mathcal{J}_{A_b}}.
\end{eqnarray}
We also have
\begin{eqnarray}
0\,\leq\,p^\ast\,\leq\,\bar{q}.
\end{eqnarray}
This follows from (\ref{iota-properties}) that implies $\mathcal{J}_{A_b}(\frac{p^\ast+\vert p^\ast\vert}{2})\leq\mathcal{J}_{A_b}(p^\ast)$ and therefore $p^\ast\geq 0$. The other inequality is a consequence of  $\mathcal{J}_{A_b}(\min\{p^\ast,\bar{q}\})\leq\mathcal{J}_{A_b}(p^\ast)$ which follows from $\int_{A_b}\vert\nabla(\min\{p^\ast,\bar{q}\})\vert^2\leq
 \int_{A_b}\vert\nabla p^\ast\vert^2$ and from (\ref{iota-properties}).
Since the map $q\rightarrow {\bf e}_{l+\frac{\lambda}{2}}(\vert q\vert\nu^w))$ is a $C^1$ map, we can write the variational equation
\begin{eqnarray}\label{rho-variation0}
\int_{A_b}((\nabla p^\ast,\nabla\gamma)+D_q{\bf e}_{l+\frac{\lambda}{2}}(  p^\ast\nu^w)\gamma)d\xi=0,
\end{eqnarray}
for all $\gamma\in W_0^{1,2}(A_b,\R)\cap L^\infty(A_b)$. In particular, if we define $A_b^*:=\{x\in A_b: p^\ast>\varphi\}$, we have
\begin{eqnarray}\label{rho-variation}
\int_{A_b^*}((\nabla p^\ast,\nabla\gamma)+D_q{\bf e}_{l+\frac{\lambda}{2}}(  p^\ast\nu^w)\gamma)d\xi=0,
\end{eqnarray}
for all $\gamma\in W_0^{1,2}(A_b,\R)\cap L^\infty(A_b)$ that vanish on $A_b\setminus A_b^*$.
If we take $\gamma=(p^\ast-\varphi)^+$ in (\ref{rho-variation}) and use (\ref{iota-properties})$_2$ which implies $D_q{\bf e}_{l+\frac{\lambda}{2}}(  p^\ast\nu^w)\geq c^2 p^\ast$ we get
\begin{eqnarray}\label{rho-variation1}
\int_{A_b^*}((\nabla p^\ast,\nabla(p^\ast-\varphi))+c^2 p^\ast(p^\ast-\varphi))d\xi\leq 0,
\end{eqnarray}
This inequality and
\begin{eqnarray}\label{phi-variation1}
\int_{A_b^*}((\nabla\varphi,\nabla(p^\ast-\varphi))+c^2\varphi(p^\ast-\varphi))dx=0,
\end{eqnarray}
that follows from (\ref{phi-comparison}) imply
\begin{eqnarray}\label{rho-variation2}
\int_{A_b^*}(\vert\nabla(p^\ast-\varphi)\vert^2+c^2(p^\ast-\varphi)^2)d\xi\leq 0.
\end{eqnarray}
That is $\mathcal{H}^n(A_b^*)=0$
which together with $p^\ast\leq\varphi$ on $A_b\setminus A_b^*$ shows that
\begin{eqnarray}\label{rho-min-phi}
p^\ast\leq\varphi, \text{ for } \xi\in A_b.
\end{eqnarray}
Let $\omega$ be the map defined by setting
\begin{eqnarray}\label{v-definition}
\omega=\left\{\begin{array}{l}
w^{\bar{q}},\text{ for } (s,\xi)\in \Omega\setminus
 (-l-\frac{\lambda}{2}, l+\frac{\lambda}{2})\times A_b,\\\\
\bar{u}+q^\omega\nu^w=\bar{u}+\min\{p^\ast,q^\ast\}\nu^w, \text{ for } \xi\in A_b.
\end{array}\right.
\end{eqnarray}
Note that this definition, the definition of $A_b$ and (\ref{rho-min-phi}) imply
\begin{eqnarray}\label{q-omega-min-phi}
q^\omega\leq\varphi, \text{ for } \xi\in B_{\varsigma,r+\frac{\varrho}{2}}.
\end{eqnarray}
 From (\ref{v-definition}) we have
\begin{eqnarray}\label{diff-energy}
J_{\mathcal{C}_{l+\frac{\lambda}{2}}^{r+\frac{\varrho}{2}}(\varsigma)}( w^{\bar{q}})-
J_{\mathcal{C}_{l+\frac{\lambda}{2}}^{r+\frac{\varrho}{2}}(\varsigma)}(\omega)
\hskip8cm\\\nonumber\geq
 \int_{A_b\cap\{p^\ast<q^\ast\}}(\frac{1}{2}(\vert\nabla q^\ast\vert^2-\vert\nabla p^\ast\vert^2+((q^\ast)^2-(p^\ast)^2)\sum_{j=1}^{n} \langle   \nu_{\xi_j}^w, \nu_{\xi_j}^w \rangle_{l+\frac{\lambda}{2}})\\\nonumber+{\bf e}_{l+\frac{\lambda}{2}}(  q^\ast\nu^w)-{\bf e}_{l+\frac{\lambda}{2}}( p^\ast\nu^w))d\xi\\\nonumber \geq\int_{A_b\cap\{p^\ast<q^\ast\}}(\frac{1}{2}(\vert\nabla q^\ast\vert^2-\vert\nabla p^\ast\vert^2 +{\bf e}_{l+\frac{\lambda}{2}}(  q^\ast\nu^w)-{\bf e}_{l+\frac{\lambda}{2}}( p^\ast\nu^w))d\xi\\\nonumber \geq\int_{A_b\cap\{p^\ast<q^\ast\}}(\frac{1}{2}\vert\nabla q^\ast-\nabla p^\ast\vert^2\hskip5.5cm\\\nonumber  \hskip1.7cm+{\bf e}_{l+\frac{\lambda}{2}}(  q^\ast\nu^w)-{\bf e}_{l+\frac{\lambda}{2}}( p^\ast\nu^w)d\xi-D_q{\bf e}_{l+\frac{\lambda}{2}}( p^\ast\nu^w)(q^\ast-p^\ast))d\xi\geq 0.
\end{eqnarray}
where we have used
\begin{eqnarray}
\frac{1}{2}(\vert\nabla q^\ast\vert^2-\vert\nabla p^\ast\vert^2)&=&\frac{1}{2}\vert\nabla q^\ast-\nabla p^\ast\vert^2 +(\nabla p^\ast,\nabla( q^\ast-p^\ast)),\\\nonumber\\\nonumber \text{ and }\quad\quad\quad\quad\quad&&\\\nonumber\\\nonumber
\int_{A_b\cap\{p^\ast<q^\ast\}}(\nabla p^\ast,\nabla(q^\ast-p^\ast)) &=&-\int_{A_b\cap\{p^\ast<q^\ast\}}D_q{\bf e}_{l+\frac{\lambda}{2}}( p^\ast\nu^w)(q^\ast-p^\ast)d\xi,
\end{eqnarray}
which follows from (\ref{rho-variation0}) with $\gamma=(q^\ast-p^\ast)^+$.
From (\ref{iota-properties}$_3$) and  (\ref{rho-min-phi}) we have
\begin{eqnarray}
{\bf e}_{l+\frac{\lambda}{2}}(  q^\ast\nu^w)-\tilde{{\bf e}}_{l+\frac{\lambda}{2}}(p^\ast,q^\ast,\nu^w)\geq
{\bf e}_{l+\frac{\lambda}{2}}(  q^\ast\nu^w)-\tilde{{\bf e}}_{l+\frac{\lambda}{2}}(\varphi, q^\ast,\nu^w).
\end{eqnarray}
From this and (\ref{q-omega-min-phi}) which implies
\begin{eqnarray}
B_{\varsigma,r+\frac{\varrho}{2}}\cap\{\phi<q^\ast\}=
A_b\cap\{\phi<q^\ast\}\subset
A_b\cap\{p^\ast<q^\ast\},
\end{eqnarray}
we have
\begin{eqnarray}
 \int_{A_b\cap\{p^\ast<q^\ast\}} {\bf e}_{l+\frac{\lambda}{2}}(  q^\ast\nu^w)-{\bf e}_{l+\frac{\lambda}{2}}( p^\ast\nu^w)-D_q{\bf e}_{l+\frac{\lambda}{2}}( p^\ast\nu^w)(q^\ast-p^\ast)d\xi\\\nonumber
 \geq
  \int_{B_{\varsigma,r+\frac{\varrho}{2}}\cap\{\varphi<q^\ast\}} {\bf e}_{l+\frac{\lambda}{2}}(  q^\ast\nu^w)-{\bf e}_{l+\frac{\lambda}{2}}( p^\ast\nu^w)-D_q{\bf e}_{l+\frac{\lambda}{2}}( p^\ast\nu^w)(q^\ast-p^\ast)d\xi\\\nonumber
  \geq
    \int_{B_{\varsigma,r+\frac{\varrho}{2}}\cap\{\varphi<q^\ast\}} {\bf e}_{l+\frac{\lambda}{2}}(  q^\ast\nu^w)-{\bf e}_{l+\frac{\lambda}{2}}( \varphi\nu^w)-D_q{\bf e}_{l+\frac{\lambda}{2}}(\varphi\nu^w)(q^\ast-\varphi))d\xi.
\end{eqnarray}
The inequality (\ref{quantitative-estimate}) follows from this and (\ref{diff-energy}).
\end{proof}
\begin{corollary}\label{corollary}
Let $w^{\bar{q}}$ as before and let $\omega\in C_S^{0,1}(\overline{\Omega},\R^m)$ the map constructed in Lemma \ref{quantitative-estimate0}. Then there is a number $c_1>0$ independent from $l, r, \lambda$ and $\varrho$ such that
\begin{eqnarray}
J_{\mathcal{C}_{l+\frac{\lambda}{2}}^{r+\frac{\varrho}{2}}(\varsigma)}( w^{\bar{q}})-
J_{\mathcal{C}_{l+\frac{\lambda}{2}}^{r+\frac{\varrho}{2}}(\varsigma)}(\omega)
\geq
c_1\mathcal{H}^{n-1}(A_{\bar{q}}\cap B_{\varsigma,r}).
\end{eqnarray}
\end{corollary}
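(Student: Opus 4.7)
The strategy is to extract a positive-definite quadratic term from the integrand in \eqref{quantitative-estimate} via the uniform convexity \eqref{iota-properties}$_1$, and then to invoke a Helmholtz-type barrier to control $\bar{q}-\varphi$ from below on the inner ball $B_{\varsigma,r}$.

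First, since Lemma~\ref{quantitative-estimate0} furnishes $0\leq\varphi\leq q^\ast\leq\bar{q}<q^\circ$ pointwise on $B_{\varsigma,r+\varrho/2}$, a second-order Taylor expansion of the scalar function $q\mapsto{\bf e}_{l+\lambda/2}(q\nu^w)$ around $q=\varphi$, combined with \eqref{iota-properties}$_1$, gives on $\{q^\ast>\varphi\}$
\[
{\bf e}_{l+\lambda/2}(q^\ast\nu^w)-{\bf e}_{l+\lambda/2}(\varphi\nu^w)-D_q{\bf e}_{l+\lambda/2}(\varphi\nu^w)(q^\ast-\varphi)\geq\frac{c^2}{2}(q^\ast-\varphi)^2.
\]
I then restrict the integral in \eqref{quantitative-estimate} to the subset $A_{\bar{q}}\cap B_{\varsigma,r}$, on which the construction of $w^{\bar{q}}$ in Lemma~\ref{lemma-w-q} forces $q^\ast\equiv\bar{q}$, while $\varphi<\bar{q}$ in the interior of $B_{\varsigma,r+\varrho/2}$ by the strong maximum principle applied to \eqref{phi-comparison}. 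This gives
\[
J_{\mathcal{C}_{l+\lambda/2}^{r+\varrho/2}(\varsigma)}(w^{\bar{q}})-J_{\mathcal{C}_{l+\lambda/2}^{r+\varrho/2}(\varsigma)}(\omega)\geq\frac{c^2}{2}\int_{A_{\bar{q}}\cap B_{\varsigma,r}}(\bar{q}-\varphi)^2\,d\xi.
\]

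The main obstacle is to produce a constant $\tau>0$, uniform in $r$, such that $\bar{q}-\varphi(\xi)\geq\tau\bar{q}$ throughout $B_{\varsigma,r}$. By rescaling, $\psi:=\varphi/\bar{q}$ solves $\Delta\psi=c^2\psi$ in $B_{\varsigma,r+\varrho/2}$ with $\psi=1$ on the boundary. I would obtain the desired gap via a radial subsolution for $1-\psi\geq 0$: set $w(\rho):=A\sin(\pi(R-\rho)/\varrho)$ on the shell $\{R-\varrho/2\leq\vert\xi-\varsigma\vert\leq R\}$ with $R:=r+\varrho/2$ and $A:=c^2\varrho^2/(\pi^2+c^2\varrho^2)$, extended by zero inside. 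A direct calculation gives $\Delta w-c^2 w\geq -c^2$ on the shell (for $r$ large, so that the lower-order radial term in the Laplacian does not spoil the sign); since $1-\psi$ satisfies $\Delta(1-\psi)-c^2(1-\psi)=-c^2$ with vanishing boundary data, the comparison principle for $\Delta-c^2$ yields $1-\psi\geq w$, and monotonicity of $\psi$ in the radial variable propagates this bound to all of $B_{\varsigma,r}$. Hence $\bar{q}-\varphi\geq\tau\bar{q}$ with $\tau:=A>0$.

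Substituting this pointwise gap into the integral above yields
\[
J_{\mathcal{C}_{l+\lambda/2}^{r+\varrho/2}(\varsigma)}(w^{\bar{q}})-J_{\mathcal{C}_{l+\lambda/2}^{r+\varrho/2}(\varsigma)}(\omega)\geq\frac{c^2\tau^2\bar{q}^2}{2}\,\mathcal{H}^{n-1}(A_{\bar{q}}\cap B_{\varsigma,r}),
\]
which is the claim with $c_1:=c^2\tau^2\bar{q}^2/2$. Since $c$, $\bar{q}$, and $\varrho$ are fixed throughout the construction, the constant $c_1$ depends only on them, and not on $l$, $r$, or $\lambda$, as required.
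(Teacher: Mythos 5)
The first half of your argument is identical to the paper's: both extract the quadratic Taylor remainder via the uniform bound $D_{qq}{\bf e}_{l+\lambda/2}\geq c^{2}$ and reduce the claim to a uniform lower bound on $\bar q-\varphi$ over $B_{\varsigma,r}$. Where you diverge is in how that last bound is obtained. The paper writes $\varphi(\xi)=\bar q\,\phi(|\xi-\varsigma|,R)$ with $R=r+\varrho/2$ and appeals to qualitative properties of the radial profile $\phi(\cdot,R)$ — in particular property (\ref{phi-l}), which makes $1-\phi(R-\varrho/2,R)$ nondecreasing in $R$ — so that one can simply take $c_{1}$ at the smallest admissible radius $r_{0}$. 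You instead build an explicit sine barrier in the outer annulus, which is a legitimate self-contained substitute. Three points, however, need patching.

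First, as written the barrier $w(\rho)=A\sin(\pi(R-\rho)/\varrho)$ does \emph{not} vanish at $\rho=R-\varrho/2$ (there $w=A$), so ``extended by zero inside'' yields a discontinuous function and hence not a global subsolution. You should define $w$ on the full annulus $\{R-\varrho\leq\rho\leq R\}$, where $\sin(\pi(R-\rho)/\varrho)$ vanishes at both ends, and then extend by zero; the peak still sits at $\rho=R-\varrho/2=r$, which is all you need.

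Second, with the exact choice $A=c^{2}\varrho^{2}/(\pi^{2}+c^{2}\varrho^{2})$ the zeroth-order estimate $-A(\pi^{2}/\varrho^{2}+c^{2})\sin\theta\geq -c^{2}$ \emph{saturates} at the peak $\theta=\pi/2$, and on the outer half $\rho\in(R-\varrho/2,R)$ the first-order radial term $\tfrac{n-2}{\rho}w'=-\tfrac{(n-2)A\pi}{\rho\varrho}\cos\theta$ is negative. Near $\theta=\pi/2$ the slack $c^{2}(1-\sin\theta)$ is quadratic in $(\pi/2-\theta)$ while the offending term is linear, so $\Delta w-c^{2}w\geq-c^{2}$ fails there for every $r$. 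Taking $A$ strictly smaller than $c^{2}\varrho^{2}/(\pi^{2}+c^{2}\varrho^{2})$ leaves a uniform slack $\propto(A_{\max}-A)$ that absorbs the $O(1/r)$ term for all $r\geq r_{0}$; that is the correct form of the ``for $r$ large'' hedge.

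Third, the radial monotonicity of $\psi=\varphi/\bar q$, which you use to push the inequality $1-\psi\geq A$ from $\rho=r$ inward to all of $B_{\varsigma,r}$, is true but should be justified: since $\psi>0$, the identity $(\rho^{\,n-2}\psi')'=c^{2}\rho^{\,n-2}\psi>0$ together with $\rho^{\,n-2}\psi'\to 0$ as $\rho\to 0$ gives $\psi'>0$, i.e. $\psi$ strictly increasing.

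With these repairs your route is sound and gives $c_{1}=\tfrac{1}{2}c^{2}A^{2}\bar q^{2}$, matching the paper's $c_{1}=\tfrac{1}{2}c^{2}\bar q^{2}(1-\phi(r_{0},r_{0}+\varrho/2))^{2}$ up to the constant; note that, like the paper's, your $c_{1}$ does in fact depend on $\varrho$ (and on $\bar q$), so the corollary's statement ``independent from\ldots $\varrho$'' is to be read as ``independent from $l,r,\lambda$ once $\varrho,\bar q$ are fixed,'' as you yourself observe.
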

\begin{proof}
Set $R=r+\frac{\varrho}{2}$, then we have $\varphi(\xi)=\bar{q}\phi(\vert \xi-\varsigma\vert,R)$ with $\phi(\cdot,R):[0,R]\rightarrow\R$ a positive function which is strictly increasing in $(0,R]$. Moreover we have $\phi(R,R)=1$ and
\begin{eqnarray}\label{phi-l}
R_1<R_2,\;t\in(0, R_1)\;\Rightarrow\;\;\phi(R_1-t,R_1)>\phi(R_2-t,R_2).
\end{eqnarray}
Note that $\xi\in B_{\varsigma,r}$ implies $\varphi(\xi)\leq \bar{q}\phi(r,r+\frac{\varrho}{2})$.
Therefore for $\xi\in B_{\varsigma,r}\cap A_{\bar{q}}$ we have
 \begin{eqnarray}\label{diff-potential}\\\nonumber
 \hskip.5cm{\bf e}_{l+\frac{\lambda}{2}}(\bar{q}\nu^w)
 -{\bf e}_{l+\frac{\lambda}{2}}(\varphi\nu^w)
 -D_q{\bf e}_{l+\frac{\lambda}{2}}(\varphi\nu^w)(\bar{q}-\varphi)\hskip5cm\\\nonumber
 =\int_\varphi^{\bar{q}}(D_q{\bf e}_{l+\frac{\lambda}{2}}(s\nu^w)
 -D_q{\bf e}_{l+\frac{\lambda}{2}}(\varphi\nu^w))ds\hskip4.5cm\\\nonumber
 \hskip2cm\geq c^2\int_\varphi^{\bar{q}}(s-\varphi)ds=\frac{1}{2}c^2(\bar{q}-\varphi)^2\geq
 \frac{1}{2}c^2\bar{q}^2(1-\phi(r,r+\frac{\varrho}{2}))^2,
\end{eqnarray}
where we have also used (\ref{iota-properties})$_1$.
The corollary follows from this inequality, from (\ref{quantitative-estimate}) and from the fact that, by (\ref{phi-l}), the last expression in (\ref{diff-potential}) is increasing with $r$. Therefore, for $r\geq r_0$, for some $r_0>0$, we can assume
\begin{eqnarray}
c_1=\frac{1}{2}c^2\bar{q}^2(1
-\phi(r_0,r_0+\frac{\varrho}{2}))^2.
\end{eqnarray}
\end{proof}
\subsection{Conclusion of the proof of Theorem \ref{main}}\label{proof-main}
Let $u$ as in Theorem \ref{main} and $l_0,\,q^\circ$ as in Lemma \ref{strict-minimizer} and assume that $\varsigma$ is such that
\begin{eqnarray}
\| u(\cdot,\varsigma)-\bar{u}\|_l\geq q^\circ,
\end{eqnarray}
for some $l\geq l_0$.
Then $u\in C_S^{0,1}(\overline{\Omega},\R^m)$ implies that, there is  $r_0>0$ independent from $l\geq l_0$ such that,
\begin{eqnarray}\label{sigma-0}
\| u(\cdot,\xi)-\bar{u}\|_l\geq \bar{q},\, \text{ for } \vert \xi-\varsigma\vert\leq r_0.
\end{eqnarray}
Let $j_0\geq 0,$ be minimum value of $j$ that violated the inequality
\begin{eqnarray}\label{inequality}
c_1\frac{r_0^{n-1}}{2}(1+\frac{c_1}{C_1})^j\leq C_1((r_0+(j+1)\varrho)^{n-1}-(r_0+j\varrho)^{n-1}),
\end{eqnarray}
where $c_1 \text{ and } C_2$ are the constants in Corollary \ref{corollary} and Lemma \ref{lemma-2}.
Let $l^\circ\geq l_0$ be fixed so that
\begin{eqnarray}\label{l-0-sufficiently-large}
C_0(r_0+j_0\varrho)^{n-1}e^{-k l^\circ}\leq c_1\theta_{n-1}\frac{r_0^{n-1}}{2},
\end{eqnarray}
where $C_0$ is defined in Lemma \ref{lemma-1} and $\theta_n$ is the measure of the unit ball in $\R^n$,
\begin{proposition}\label{l-2-bound}
Let $\lambda, \varrho, \bar{q}\in(0,q^\circ)  \text{ and } l^\circ\geq l_0$ fixed as before and let $r^\circ=r_0+j_0\varrho$ where $j_0\geq 0$ is the minimum value of $j$ that violates (\ref{inequality}). Assume $l\geq l^\circ$ and assume that  $\mathcal{C}_{l+\lambda}^{r^\circ+2\varrho}(\varsigma)\subset\Omega$ satisfies
\begin{eqnarray}
d(\mathcal{C}_{l+\lambda}^{r^\circ+2\varrho}(\varsigma),\partial\Omega)\geq l+\lambda.
\end{eqnarray}
Then
\begin{eqnarray}
q^u(\varsigma)=\| u(\cdot,\varsigma)-\bar{u}\|_{l+\frac{\lambda}{2}}< q^\circ.
\end{eqnarray}

\end{proposition}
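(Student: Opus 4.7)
The plan is to argue by contradiction: suppose $q^u(\varsigma)=\|u(\cdot,\varsigma)-\bar u\|_{l+\lambda/2}\geq q^\circ$. Since $u\in C_S^{0,1}(\overline\Omega,\R^m)$, Lipschitz continuity of the map $\xi\mapsto\|u(\cdot,\xi)-\bar u\|_{l+\lambda/2}$ produces a universal $r_0>0$ with $B_{\varsigma,r_0}\subset A_{\bar q}$; the inclusion persists after the first replacement because $v=u$ on the slab $|s|\leq l$ by Lemma \ref{lemma-1}(i). The strategy is to iterate the chain $u\leadsto v\leadsto w\leadsto w^{\bar q}\leadsto\omega$ of Lemmas \ref{lemma-1}--\ref{lemma-w-q} and Corollary \ref{corollary} on cylinders of growing radius $r_j:=r_0+j\varrho$, $j=0,1,\ldots,j_0$, showing that at $j=j_0$ this produces a symmetric admissible competitor with strictly less energy than $u$, contradicting minimality.

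At step $j$, applying the four constructions in succession on $\mathcal{C}_{l+\lambda}^{r_j+2\varrho}(\varsigma)$ yields a competitor $\omega_j$ that equals $u$ outside the cylinder. Summing the energy estimates in part (iii) of Lemmas \ref{lemma-1} and \ref{lemma-2}, together with (\ref{quantitative-w-wq}) of Lemma \ref{lemma-w-q} and with Corollary \ref{corollary}, gives
\[
J(\omega_j)-J(u)\leq C_0(r_j+2\varrho)^{n-1}e^{-2kl}+C_1\mathcal{H}^{n-1}(S_j)-c_1\mathcal{H}^{n-1}(A_{\bar q}\cap B_{\varsigma,r_j}),
\]
with $S_j:=A_{\bar q}\cap\{r_j<|\xi-\varsigma|<r_j+\varrho\}$. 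Minimality of $u$ forces the right-hand side to be nonnegative, and because $A_{\bar q}\cap B_{\varsigma,r_{j+1}}$ contains the disjoint union $(A_{\bar q}\cap B_{\varsigma,r_j})\cup S_j$, this rearranges into the one-step growth bound
\[
\mathcal{H}^{n-1}(A_{\bar q}\cap B_{\varsigma,r_{j+1}})\geq\bigl(1+\tfrac{c_1}{C_1}\bigr)\mathcal{H}^{n-1}(A_{\bar q}\cap B_{\varsigma,r_j})-\tfrac{C_0}{C_1}(r_j+2\varrho)^{n-1}e^{-2kl}.
\]

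Starting from $\mathcal{H}^{n-1}(A_{\bar q}\cap B_{\varsigma,r_0})\geq\theta_{n-1}r_0^{n-1}$ and using (\ref{l-0-sufficiently-large}) to keep the cumulative exponential error below $\tfrac{1}{2}\theta_{n-1}r_0^{n-1}$ over the first $j_0$ iterations, induction yields
\[
\mathcal{H}^{n-1}(A_{\bar q}\cap B_{\varsigma,r_j})\geq\tfrac{\theta_{n-1}r_0^{n-1}}{2}\bigl(1+\tfrac{c_1}{C_1}\bigr)^{j},\qquad 0\leq j\leq j_0.
\]
Since $\mathcal{H}^{n-1}(S_{j_0})\leq\theta_{n-1}\bigl((r_0+(j_0+1)\varrho)^{n-1}-(r_0+j_0\varrho)^{n-1}\bigr)$, the failure of (\ref{inequality}) at $j_0$ gives, after multiplying by $\theta_{n-1}$,
\[
c_1\mathcal{H}^{n-1}(A_{\bar q}\cap B_{\varsigma,r_{j_0}})-C_1\mathcal{H}^{n-1}(S_{j_0})\geq c_1\tfrac{\theta_{n-1}r_0^{n-1}}{2}\bigl(1+\tfrac{c_1}{C_1}\bigr)^{j_0}-C_1\theta_{n-1}\bigl(r_{j_0+1}^{n-1}-r_{j_0}^{n-1}\bigr)>0,
\]
and (\ref{l-0-sufficiently-large}) makes this positive gap dominate the exponential remainder $C_0(r_{j_0}+2\varrho)^{n-1}e^{-2kl}$, violating the minimality inequality of paragraph two and thereby contradicting the initial hypothesis $q^u(\varsigma)\geq q^\circ$.

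The main obstacle is paragraph three: propagating the geometric lower bound on $\mathcal{H}^{n-1}(A_{\bar q}\cap B_{\varsigma,r_j})$ through all $j_0$ iterations without the exponential cost of Lemma \ref{lemma-1} destroying it. This is exactly what the simultaneous choice of $l^\circ$ in (\ref{l-0-sufficiently-large}) secures; the ratio between the exponentially growing factor $(1+c_1/C_1)^{j_0}$ and the polynomial annular thickness is what forces $j_0<\infty$ and closes the argument, while the rest is bookkeeping of disjoint unions and additive energy estimates.
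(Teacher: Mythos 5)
Your proposal follows essentially the same route as the paper: argue by contradiction, produce $r_0>0$ with $B_{\varsigma,r_0}\subset A_{\bar q}$ via Lipschitz continuity of $\xi\mapsto\|u(\cdot,\xi)-\bar u\|$, iterate the four-part replacement chain $u\leadsto v\leadsto w\leadsto w^{\bar q}\leadsto\omega$ on cylinders of growing radius $r_j=r_0+j\varrho$, use minimality of $u$ at each step to extract a one-step growth estimate on $\mathcal{H}^{n-1}(A_{\bar q}\cap B_{\varsigma,r_j})$, and reach a contradiction at $j=j_0$ from the definition of $j_0$ via (\ref{inequality}). One piece of bookkeeping deserves correction, though. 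You claim that (\ref{l-0-sufficiently-large}) keeps the \emph{cumulative} exponential error below $\tfrac{1}{2}\theta_{n-1}r_0^{n-1}$ over the first $j_0$ iterations; that is not accurate as stated. When the one-step bound $a_{j+1}\geq(1+c_1/C_1)a_j-\epsilon_j$ (with $a_j=\mathcal{H}^{n-1}(A_{\bar q}\cap B_{\varsigma,r_j})$) is iterated, the accumulated error at stage $j$ carries geometric weights and is bounded by
\[
\sum_{i=0}^{j-1}\Bigl(1+\tfrac{c_1}{C_1}\Bigr)^{j-1-i}\epsilon_i\leq\sigma_0\Bigl(\bigl(1+\tfrac{c_1}{C_1}\bigr)^{j}-1\Bigr),
\]
which grows with $j$ rather than staying below $\sigma_0=\tfrac{1}{2}\theta_{n-1}r_0^{n-1}$. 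What actually closes the induction is that the base value $a_0\geq 2\sigma_0$ exceeds the target $\sigma_0$ by a full $\sigma_0$, and since (\ref{l-0-sufficiently-large}) forces $\epsilon_j\leq\tfrac{c_1}{C_1}\sigma_0$ for all $j\leq j_0$, each step absorbs its own local error, yielding $a_j\geq\sigma_0(1+c_1/C_1)^j+\sigma_0$. This is exactly what the paper secures in one stroke by working with the shifted quantity $\sigma_j:=\mathcal{H}^{n-1}(A_{\bar q}\cap B_{\varsigma,r_j})-\sigma_0$, for which the recursion becomes the clean $\sigma_{j+1}\geq(1+c_1/C_1)\sigma_j$ with no residual error term. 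Your final estimate and contradiction are correct; only the narrated justification for the error absorption is off and should be replaced by the shifted-variable (or equivalent) argument.
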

\begin{proof}
Suppose instead that
\begin{eqnarray}
\| u(\cdot,\varsigma)-\bar{u}\|_{l+\frac{\lambda}{2}} \geq q^\circ,
\end{eqnarray}
and set
\begin{eqnarray}\label{sigma-0-definition}
\sigma_0:=\theta_{n-1}\frac{r_0^{n-1}}{2}.
\end{eqnarray}
 Then $l^\circ\geq l_0$ and (\ref{sigma-0})) imply
\begin{eqnarray}
\mathcal{H}^{n-1}(A_{\bar{q}}\cap B_{\varsigma,r_0})\geq 2\sigma_0.
\end{eqnarray}
For each $0\leq j\leq j_0$ let $r_j:=r_0+j\varrho$ and let $v_j,\,w_j,\,w_j^{\bar{q}} \text{ and } \omega_j$ the maps $v,\,w,\,w^{\bar{q}} \text{ and } \omega$ defined in Lemma \ref{lemma-1},\,Lemma \ref{lemma-2},\, Lemma \ref{lemma-w-q} and Lemma \ref{quantitative-estimate0} with $l\geq l^\circ \text{ and } r=r_j$. Then from these Lemmas and Corollary \ref{corollary} we have
\begin{eqnarray}
\left.\begin{array}{l}
J(u)_{\mathcal{C}_{l+\lambda}^{r_j^\circ+2\varrho}(\varsigma)}
-J(v_j)_{\mathcal{C}_{l+\lambda}^{r_j^\circ+2\varrho}(\varsigma)}\geq-C_0r_j^{n-1}e^{-k l^\circ},\\\\
J(v_j)_{\mathcal{C}_{l+\lambda}^{r_j^\circ+2\varrho}(\varsigma)}
-J(w_j)_{\mathcal{C}_{l+\lambda}^{r_j^\circ+2\varrho}(\varsigma)}\geq-C_1\mathcal{H}^{n-1}(A_{\bar{q}}\cap (\overline{B}_{\varsigma,r_{j+1}}\setminus B_{\varsigma,r_j})),\\\\
J(w_j)_{\mathcal{C}_{l+\lambda}^{r_j^\circ+2\varrho}(\varsigma)}
-J(w_j^{\bar{q}})_{\mathcal{C}_{l+\lambda}^{r_j^\circ+2\varrho}(\varsigma)}\geq 0,\\\\
J(w_j^{\bar{q}})_{\mathcal{C}_{l+\lambda}^{r_j^\circ+2\varrho}(\varsigma)}
-J(\omega_j)_{\mathcal{C}_{l+\lambda}^{r_j^\circ+2\varrho}(\varsigma)}\geq c_1\mathcal{H}^{n-1}(A_{\bar{q}}\cap \overline{B}_{\varsigma,r_j}).
\end{array}\right.
\end{eqnarray}
From this and the minimality of $u$ it follows
\begin{eqnarray}\label{inequality-1}
\hskip1.5cm 0\geq -C_0r_j^{n-1}e^{-k l^\circ}-C_1\mathcal{H}^{n-1}(A_{\bar{q}}\cap (\overline{B}_{\varsigma,r_{j+1}}\setminus B_{\varsigma,r_j}))+ c_1\mathcal{H}^{n-1}(A_{\bar{q}}\cap \overline{B}_{\varsigma,r_j}).
\end{eqnarray}
Define
\begin{eqnarray}
\sigma_j:=\mathcal{H}^{n-1}(A_{\bar{q}}\cap B_{\varsigma,r_j})-\sigma_0, \text{ for } j\geq 1.
\end{eqnarray}
If $j_0=0$ the inequality (\ref{inequality-1}), using also (\ref{l-0-sufficiently-large}),  implies
\begin{eqnarray}\label{inequality-2}
0\geq -c_1\sigma_0-C_1\sigma_1+2C_1\sigma_0+2c_1\sigma_0\geq c_1\sigma_0-C_1(\sigma_1-\sigma_0).
\end{eqnarray}
If $j_0>0$ in a similar way we get
\begin{eqnarray}\label{inequality-3}
0\geq -c_1\sigma_0-C_1(\sigma_{j-1}-\sigma_j)+c_1(\sigma_j+\sigma_0)= c_1\sigma_j-C_1(\sigma_{j+1}-\sigma_j).
\end{eqnarray}
From (\ref{inequality-2}) and (\ref{inequality-3}) it follows
\begin{eqnarray}
\sigma_j\geq (1+\frac{c_1}{C_1})^j\sigma_0,
\end{eqnarray}
and therefore, using also (\ref{sigma-0-definition})
\begin{eqnarray}\label{inequality-4}
c_1(1+\frac{c_1}{C_1})^j\theta_{n-1}\frac{r_0^{n-1}}{2}\leq C_1(\sigma_{j+1}-\sigma{j})\leq C_1\theta_{n-1}(r_{j+1}^{n-1}-r_j^{n-1}).
\end{eqnarray}
This inequality is equivalent to (\ref{inequality}). It follows that, on the basis of the definition of $j_0$, putting $j=j_0$ in (\ref{inequality-4}) leads to a contradiction with the minimality of $u$.
\end{proof}
\subsection{The exponential estimate}\label{sec-exp}
\begin{lemma}\label{lemma-case-2}
Assume $r>r^\circ+2\varrho$ and $l>l^\circ+\lambda$ and assume that $\mathcal{C}_l^r(\varsigma_0)\subset\Omega$ satisfies
\begin{eqnarray}
d(\mathcal{C}_l^r(\varsigma_0),\partial\Omega)\geq l.
\end{eqnarray}
Then there are constants $K_1 \text{ and } k_1>0$ independent of $r>r^\circ+2\varrho$ and $l>l^\circ+\lambda$ such that
\begin{eqnarray}
\| u(\cdot,\varsigma_0)-\bar{u}\|_l^\frac{1}{2}
\leq K_1e^{-k_1r}.
\end{eqnarray}
\end{lemma}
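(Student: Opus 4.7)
My approach combines Proposition \ref{l-2-bound} with a one-shot comparison against the positive solution $\varphi$ of a linear Helmholtz-type equation on a large ball; the standard exponential decay of $\varphi$ from boundary to center will translate into the required exponential estimate for $q^u(\varsigma_0)$.

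First, I exploit the hypotheses $r > r^\circ + 2\varrho$, $l > l^\circ + \lambda$, and $d(\mathcal{C}_l^r(\varsigma_0), \partial\Omega) \geq l$: for every $\xi$ in the inner disc $B_R(\varsigma_0)$ with $R := r - r^\circ - 2\varrho$, the sub-cylinder $\mathcal{C}_l^{r^\circ + 2\varrho}(\xi)$ fits inside $\mathcal{C}_l^r(\varsigma_0) \subset \Omega$ with the margin required by Proposition \ref{l-2-bound}, yielding the preliminary uniform bound
\[
q^u(\xi) := \|u(\cdot,\xi) - \bar{u}\|_{l - \lambda/2} < q^\circ, \qquad \xi \in \overline{B}_R(\varsigma_0).
\]

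Next, I fix a threshold $0 < \bar q < q^\circ$ and let $\varphi : B_R(\varsigma_0) \rightarrow \R$ solve
\[
\Delta \varphi = c^2 \varphi \text{ in } B_R(\varsigma_0), \qquad \varphi|_{\partial B_R(\varsigma_0)} = \bar q,
\]
where $c^2$ is the constant from Lemma \ref{strict-minimizer}. The function $\varphi$ is radial, positive, and decreases exponentially toward the center, with $\varphi(\varsigma_0) \leq C \bar q\, e^{-cR}$ by the standard radial analysis. I then apply the Replacement Lemmas \ref{lemma-1}--\ref{quantitative-estimate0} centered at $\varsigma_0$ with outer radius slightly less than $R$. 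The uniform bound from the previous step ensures that the set $A_{\bar q}$ appearing in Lemma \ref{lemma-2} is empty (or of exponentially small area), so the only nontrivial cost in the chain of energy inequalities comes from Lemma \ref{lemma-1}, namely $C_0 R^{n-1} e^{-2 k_0 l}$. The minimality $J(u) \leq J(\omega)$ combined with the sharp gain estimate (\ref{quantitative-estimate}) then forces
\[
\int_{B_R(\varsigma_0) \cap \{q^u > \varphi\}} (q^u - \varphi)^2\, d\xi \leq C R^{n-1} e^{-2 k_0 l},
\]
after identifying $q^{w^{\bar q}}$ with $q^u$ in the core of the cylinder (their difference is $O(e^{-k_0 l})$, supported only in the two thin end caps $l \leq |s| \leq l + \lambda/2$).

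Finally, since $u \in C_S^{0,1}(\overline{\Omega}, \R^m)$ by (\ref{gradu-expo}), the map $\xi \mapsto q^u(\xi)$ is Lipschitz with a universal constant, so any pointwise excess $q^u(\xi_0) - \varphi(\xi_0) \geq \eta$ persists in a ball of radius proportional to $\eta$, producing an $L^2$-excess of order $\eta^{n+1}$. Comparing with the integral bound just derived forces the pointwise inequality $q^u(\xi) \leq \varphi(\xi) + C' e^{-k' l}$ in the interior of $B_R(\varsigma_0)$. Specializing at $\varsigma_0$, using $\varphi(\varsigma_0) \leq C \bar q\, e^{-c R}$ with $R = r - r^\circ - 2\varrho$, and choosing $l$ large enough that $e^{-k' l}$ is dominated by $e^{-cR}$, one obtains $\|u(\cdot,\varsigma_0) - \bar{u}\|_l \leq K_1^2 e^{-2 k_1 r}$, which is the claim after taking the square root. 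The main obstacle is the last pointwise-from-integral upgrade, which crucially relies on the Lipschitz estimate (\ref{gradu-expo}) for $u$; this explains why the preceding theory was set up in the class $C_S^{0,1}$.
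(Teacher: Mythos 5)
Your Step 1 (propagating Proposition \ref{l-2-bound} over sub-cylinders to get a uniform bound on the disc $B_R(\varsigma_0)$ with $R=r-(r^\circ+2\varrho)$) and your choice of the Helmholtz supersolution $\varphi$ on $B_R(\varsigma_0)$ are identical to the paper's. The divergence, and the gap, is in how you pass from $\varphi$ to the pointwise bound on $q^u(\varsigma_0)$.

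The paper does not go through an integral estimate at all. It invokes directly the comparison argument that produced (\ref{q-omega-min-phi}) inside Lemma \ref{quantitative-estimate0}: if $q^u>\varphi$ on a set of positive measure, one builds a competitor $\omega$ whose profile amplitude $q^\omega$ is pinned below $\varphi$ by the weak maximum principle of (\ref{rho-variation0})--(\ref{rho-min-phi}), and the strict convexity $D_{qq}\mathbf{e}_l\geq c^2$ makes $\omega$ strictly cheaper, contradicting minimality. This yields the \emph{pointwise} inequality $q^u(\varsigma)\leq\varphi(\varsigma)$ on all of $B_R(\varsigma_0)$, with no residual $l$-dependent error term. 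The exponential decay then comes entirely from the radial Helmholtz estimate $\phi(0,R)\leq e^{-h_0 R}$ of Lemma~2.4 in \cite{flp}, giving $q^u(\varsigma_0)\leq\bar q\, e^{h_0(r^\circ+2\varrho)}e^{-h_0 r}$, uniformly in both $l$ and $r$.

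Your route instead proves only the $L^2$-type excess bound
\[
\int_{B_R(\varsigma_0)\cap\{q^u>\varphi\}}(q^u-\varphi)^2\,d\xi \;\lesssim\; R^{n-1}e^{-2k_0 l},
\]
and then upgrades it to $q^u(\xi)\leq\varphi(\xi)+C'e^{-k'l}$ via the Lipschitz property of $q^u$. This leaves an additive error $C'e^{-k'l}$ that you propose to dispose of by ``choosing $l$ large enough that $e^{-k'l}$ is dominated by $e^{-cR}$.'' That step is not available: the lemma must hold with constants $K_1, k_1$ independent of $r>r^\circ+2\varrho$ and $l>l^\circ+\lambda$, and $l$ is part of the data, not a free parameter. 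In the regime where $l$ is close to $l^\circ+\lambda$ and $r\to\infty$, your bound degenerates (indeed the right-hand side $R^{n-1}e^{-2k_0 l}$ even grows in $r$), and you obtain nothing resembling $e^{-k_1 r}$. The crux you are missing is precisely the paper's maximum-principle step: because the comparison $q^u\leq\varphi$ is obtained directly from the variational inequality for the reduced action, there is no surviving $e^{-kl}$ cost to absorb, and uniformity in $l$ is automatic. Without that device, the argument as you have written it does not close.
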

\begin{proof}
From $r>r^\circ+2\varrho$ it follows that $\vert\varsigma-\varsigma_0\vert\leq r-(r^\circ+2\varrho)$ implies
\begin{eqnarray}
d(\mathcal{C}_l^{r^\circ+2\varrho}(\varsigma),\partial\Omega)\geq l.
\end{eqnarray}
Therefore we can invoke Proposition \ref{l-2-bound} to conclude that
\begin{eqnarray}
\| u(\cdot,\varsigma)-\bar{u}\|\leq \bar{q}, \text{ for } \vert\varsigma-\varsigma_0\vert\leq r-(r^\circ+2\varrho).
\end{eqnarray}
Let $\varphi:B_{\varsigma_0,r -(r^\circ+2\varrho)}\rightarrow\R$ the solution of
\begin{eqnarray}\label{phi-comparison-1}
\left\{\begin{array}{l}
\Delta\varphi=c^2\varphi, \text{ in } B_{\varsigma_0,r -(r^\circ+2\varrho)}\\\\
\varphi=\bar{q}, \text{ on } \partial B_{\varsigma_0,r -(r^\circ+2\varrho)}.
\end{array}\right.
\end{eqnarray}
Then we have
\begin{eqnarray}\label{q-omega-min-phi-1}
\| u(\cdot,\varsigma)-\bar{u}\|\leq \varphi(\varsigma), \text{ for } \varsigma\in B_{\varsigma_0,r -(r^\circ+2\varrho)}.
\end{eqnarray}
This follows by the same argument leading to (\ref{q-omega-min-phi}) in the proof of Lemma \ref{quantitative-estimate0}. Indeed, if (\ref{q-omega-min-phi-1}) does not hold, then by proceeding as in the proof of Lemma \ref{quantitative-estimate0} we can construct a competing map $\omega$ that satisfies (\ref{q-omega-min-phi-1}) and has less energy than $u$ contradicting its minimality property. In particular (\ref{q-omega-min-phi-1}) implies
\begin{eqnarray}\label{q-omega-min-phi-2}
\|u(\cdot,\varsigma_0)-\bar{u}\|\leq \varphi(\varsigma_0).
\end{eqnarray}
On the other hand it can be shown, see Lemma 2.4 in \cite{flp}, that there is a constant $h_0>0$ such that
\[\phi(0,r)\leq e^{-h_0 r};\;\text{ for }\;r\geq r_0\]
From this and (\ref{q-omega-min-phi-2}) we get
\begin{eqnarray}
\varphi(\varsigma_0)=\bar{q}\phi(0,r-(r^\circ+2\varrho))\leq
\bar{q}e^{h_0(r^\circ+2\varrho)}e^{-h_0r}=K_1e^{-k_1r}.
\end{eqnarray}
This concludes the proof with $K_1=\bar{q}e^{h_0(r^\circ+2\varrho)}$ and $k_1=h_0$.
\end{proof}

We are now in the position of proving the exponential estimate (i) in Theorem \ref{main}. We distinguish two cases:
\begin{description}
  \item[Case $1$] $x=(s,\xi)\in\Omega \text{ satisfies } s>\frac{1}{2}d(x,\partial\Omega)$. In this case, taking also into account that $\Omega$ satisfies $\bf{(i)}$, we have
      \begin{eqnarray}
      d(x,\partial\Omega^+)\geq \frac{1}{2}d(x,\partial\Omega).
      \end{eqnarray}
      From this and Theorem \ref{theorem-1} it follows
      \begin{eqnarray}\label{case-1-estimate}
      \vert u(s,\xi)-\bar{u}(s)\vert\leq \vert u(s,\xi)-a\vert+\vert \bar{u}(s)-a\vert\hskip4.5cm\\\nonumber\hskip2.5cm\leq K_0e^{-k_0d(x,\partial\Omega^+)}+\bar{K}e^{-\bar{k}s}
      \leq(K_0+\bar{K})e^{-\frac{1}{2}\min\{k_0,\bar{k}\}d(x,\partial\Omega)},
      \end{eqnarray}
      where we have also used
      \begin{eqnarray}
      \vert\bar{u}(s)-a\vert\leq\bar{K}e^{-\bar{k}s}.
      \end{eqnarray}
  \item[Case $2$] $x=(s,\xi)\in\Omega \text{ satisfies } 0\leq s\leq\frac{1}{2}d(x,\partial\Omega)$. In this case, elementary geometric considerations and the assumption $\bf{(i)}$ on $\Omega$ imply the existence of $\alpha\in(0,1)$ ($\alpha=\frac{1}{4}$ will do) such that
      \begin{eqnarray}\label{case-2-distance}
\mathcal{C}_{s+\alpha d(x)}^{\alpha d(x)}(\xi)&\subset&\Omega\hskip.8cm \text{ and }\\\nonumber
d(\mathcal{C}_{s+\alpha d(x)}^{\alpha d(x)}(\xi),\partial\Omega)&\geq& s+\alpha d(x),
\end{eqnarray}
where we have set $d(x):=d(x,\partial\Omega)$.
From (\ref{case-2-distance}) and Lemma \ref{lemma-case-2} it follows
\begin{eqnarray}
\| u(\cdot,\xi)-\bar{u}\|_l
\leq K_1e^{-k_1\alpha d(x)},\, \text{ for } d(x)>r^\circ+2\varrho.
\end{eqnarray}
This and Lemma \ref{l-infinity-less-l-2} imply, recalling $d(x)=d(x,\partial\Omega)$,
\begin{eqnarray}\label{case-2-exponential}
\vert u(s,\xi)-\bar{u}(s)\vert\leq K_1^\frac{2}{3}e^{-\frac{2}{3}k_1\alpha d(x,\partial\Omega)}.
\end{eqnarray}
\end{description}
The exponential estimate follows from  (\ref{case-2-exponential}) and  (\ref{case-2-exponential}).

\subsection{The proof of Theorems \ref{main-1} and \ref{main-2}}\label{main-final}
If $\Omega=\R^n$ the proof of Theorem \ref{main} simplifies since we can avoid the technicalities needed in the case that $\Omega$ is bounded in the $s=x_1$ direction and assume $l=+\infty$. The possibility of working with $l=+\infty$ is based on the following lemma
\begin{lemma}\label{l-infty}
Let $u:\R^n\rightarrow\R^m$ the symmetric minimizer in Theorem \ref{theorem-1}. Given a smooth open set $O\subset\R^{n-1}$ let $\R\times O$ the cylinder $\R\times O=\{(s,\xi): s\in\R,\;\xi\in\ O\}$. Then
\begin{equation}\label{min-infty}
J_{\R\times O}(u)=\min_{v\in u+W_{0 S}^{1,2}(\R\times O;\R^m)}J_{\R\times O}(v),
\end{equation}
where $W_{0 S}^{1,2}(\R\times O;\R^m)$ is the subset of $W_S^{1,2}(\R\times O;\R^m)$ of the maps that satisfy $v=0$ on $\partial\R\times O$.
\end{lemma}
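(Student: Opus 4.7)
The plan is to reduce the minimality property on the unbounded cylinder $\R\times O$ to the already established minimality of $u$ on bounded symmetric Lipschitz subsets (Definition \ref{definition-stable-s}) by truncating an arbitrary admissible competitor in both the $s$ and $\xi$ directions and passing to the limit. The exponential estimate $|u-a|+|\nabla u|\leq K_0 e^{-k_0|s|}$, which holds on $\Omega=\R^n$ (with $\partial\Omega^+=\{s=0\}$) by Theorem \ref{theorem-1} and (\ref{gradu-expo}), will be the essential ingredient for controlling tail contributions.

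Fix $v\in W_{0S}^{1,2}(\R\times O;\R^m)$. Choose a smooth even cutoff $\eta_L:\R\to[0,1]$ equal to $1$ on $[-L,L]$ and supported in $[-L-1,L+1]$, and a smooth cutoff $\tilde\eta_K:\R^{n-1}\to[0,1]$ equal to $1$ on $B_K$ and supported in $B_{K+1}$. Define $v_{L,K}(s,\xi):=\eta_L(s)\tilde\eta_K(\xi)\,v(s,\xi)$. The evenness of $\eta_L$ preserves the equivariance $v_{L,K}(-s,\xi)=\widehat{v_{L,K}(s,\xi)}$ (using that the reflection on $\R^m$ is linear), the boundary trace of $v_{L,K}$ on $\R\times\partial O$ vanishes since $v$ does, and $\supp v_{L,K}$ is contained in the bounded symmetric Lipschitz set
\[
\Omega_{L,K}:=(-L-1,L+1)\times(O\cap B_{K+1}).
\]
Applying Definition \ref{definition-stable-s} on $\Omega_{L,K}$ to the perturbation $v_{L,K}$ yields $J_{\Omega_{L,K}}(u+v_{L,K})\geq J_{\Omega_{L,K}}(u)$, and since $u+v_{L,K}\equiv u$ off $\Omega_{L,K}$, this is equivalent to
\[
\Delta_{L,K}:=\int_{\R\times O}\Bigl[\nabla u\cdot\nabla v_{L,K}+\tfrac12|\nabla v_{L,K}|^2+W(u+v_{L,K})-W(u)\Bigr]\geq 0.
\]

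To finish I pass $L,K\to\infty$ and show $\Delta_{L,K}\to\Delta$, where $\Delta$ is the analogous integral with $v$ replacing $v_{L,K}$; this gives (\ref{min-infty}). Dominated convergence yields $v_{L,K}\to v$ in $W^{1,2}(\R\times O;\R^m)$: the spurious term $v\,\nabla(\eta_L\tilde\eta_K)$, supported in $\{L\leq|s|\leq L+1\}\cup\{K\leq|\xi|\leq K+1\}$, is $L^2$-tail-small since $v\in L^2$. The gradient terms then converge, $\int\nabla u\cdot\nabla v_{L,K}\to\int\nabla u\cdot\nabla v$ by Cauchy--Schwarz using the exponential decay $|\nabla u|\leq K_0 e^{-k_0|s|}$, and $\int\tfrac12|\nabla v_{L,K}|^2\to\int\tfrac12|\nabla v|^2$ by strong $W^{1,2}$ convergence. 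The expected main obstacle is the potential term: because $v\in W^{1,2}$ need not be $L^\infty$ for $n\geq 2$, one cannot pointwise bound $W(u+v_{L,K})-W(u)$ by a Taylor remainder. The resolution is a dichotomy. If $J_{\R\times O}(u+v)=\infty$ the inequality (\ref{min-infty}) is automatic; otherwise $W(u+v)\in L^1(\R\times O)$ and, combined with $W(u)\leq Ce^{-2k_0|s|}\in L^1(\R\times O)$ (from $W(a)=0$, $D^2W(a)$ positive definite, and the exponential decay of $u$), one obtains an $L^1$ dominator for the sequence $W(u+v_{L,K})$, so that dominated convergence gives $\int W(u+v_{L,K})\to\int W(u+v)$. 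It is in this careful $L^1$-domination, ruling out concentration of $W(u+v_{L,K})$ as $L,K\to\infty$, that the exponential decay of $u$ toward $a$ plays its decisive role.
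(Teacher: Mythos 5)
Your approach follows the paper's own argument in structure: truncate the competitor in the $s$-direction, invoke minimality of $u$ on a bounded cylinder via Definition~\ref{definition-stable-s}, and let the truncation length go to infinity. The only cosmetic difference is that you use a smooth cutoff $\eta_L$ multiplying the perturbation $v$, while the paper interpolates the full competitor $u+v$ linearly to $u$ on $\{l\leq|s|\leq l+1\}$; these are the same construction, since $u+\eta_L v=(1-\eta_L)u+\eta_L(u+v)$. One small point: the $\xi$-truncation $\tilde\eta_K$ is superfluous. The set $O$ is necessarily bounded for the argument to make sense (Definition~\ref{definition-stable-s} tests against \emph{bounded} symmetric Lipschitz sets, the paper's own test domain $[-l-1,l+1]\times O$ must be bounded, and the analogous Lemma~\ref{l-infty-t} explicitly states ``bounded''), so for $K$ large enough $\tilde\eta_K\equiv 1$ on $O$ and nothing is truncated.

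The place where the argument has a genuine gap is exactly the potential term you flag, and the dichotomy you propose does not by itself close it. From $W(u),\,W(u+v)\in L^1(\R\times O)$ you cannot conclude that $\sup_{\theta\in[0,1]}W(u+\theta v)$ has an $L^1$ majorant: $W$ is not convex, so on the segment between $u(x)$ and $u(x)+v(x)$ it can attain values much larger than $\max\{W(u(x)),W(u(x)+v(x))\}$, in particular when $|v(x)|$ is large, or when $u(x)+v(x)$ is near a zero of $W$ other than $\pm a$ (then $W(u(x)+v(x))$ is small but the segment from $u(x)\approx a$ to $u(x)+v(x)$ crosses a region where $W$ is of order one). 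To finish, one needs an extra step---for instance first observing that the competitor can be assumed bounded in $L^\infty$ (radial truncation for a coercive $W$ lowers energy and preserves symmetry and boundary data), or showing directly that $\int_{\{L\leq|s|\leq L+1\}\times O}W(u+\eta_L v)\to 0$ by combining the tail-smallness of $\int W(u+v)$ and $\int W(u)$ with the structure of $W$ near its zero set. For what it is worth, the paper's own proof is also terse at this point: it writes the transition-region correction as $O(e^{-kl})$, which is what one gets after assuming pointwise exponential smallness of the competitor's oscillation, something not available for a general $W^{1,2}$ competitor; the honest estimate is $o(1)$ as $l\to\infty$, which still suffices. So your sense of where the difficulty sits is sound; the resolution you sketch is incomplete.
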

\begin{proof}
Assume there are $\eta>0$ and $v\in W_{0 S}^{1,2}(\R\times O;\R^m)$ such that
\begin{equation}\label{cont-assum}
J_{\R\times O}(u)-J_{\R\times O}(v)\geq\eta.
\end{equation}
For each $l>0$ define $\tilde{v}\in W_{0 S}^{1,2}(\R\times O;\R^m)$ by
\[
\tilde{v}=\left\{\begin{array}{l} v,\quad\text{ for }\;s\in[0,l],\;\xi\in O,\\
(1+l-s)v+(s-l)u, \;s\in[l,l+1],\;\xi\in O,\\
u,\quad\text{ for } \;s\in[l,+\infty),\;\xi\in O.
\end{array}\right.\]
The minimality of $u$ implies
\begin{equation}\label{before-limit}
0\geq J_{[-l-1,l+1]\times O}(u)-J_{[-l-1,l+1]\times O}(\tilde{v})=J_{[-l-1,l+1]\times O}(u)-J_{[-l,l]\times O}(v)+
\mathrm{O}(e^{-k l}),
\end{equation}
where we have also used the fact that both $u$ and $v$ belong to $W_S^{1,2}(\R\times O;\R^m)$. Taking the limit for $l\rightarrow +\infty$ in (\ref{before-limit}) yields
\[0\geq J_{\R\times O}(u)-J_{\R\times O}(v)\]
in contradiction with (\ref{cont-assum}).
\end{proof}
Once we know that $u$ satisfies (\ref{min-infty}) the same arguments leading to Proposition \ref{l-2-bound} imply the existence of $r^\circ>0$ such that
\begin{equation}\label{stay-below-q0}
\R\times B_{r^\circ}(\xi)\subset\R^n\;\Rightarrow\;\|u(\cdot,\xi)-\bar{u}\|_{\infty}<q^\circ,
\end{equation}
where $B_{r^\circ}(\xi)\subset\R^{n-1}$ is the ball of center $\xi$ and radius $r^\circ$.
Since the condition $\R\times B_{r^\circ}(\xi)\subset\R^n$ is trivially satisfied for each $\xi\in\R^{n-1}$ we have
\[\|u(\cdot,\xi)-\bar{u}\|_{\infty}<q^\circ,\;\text{ for every }\;\xi\in\R^{n-1}.\]
To conclude the proof we observe that everything has been said concerning $q^\circ$ can be repeated verbatim for each $q\in(0,q^\circ)$. It follows that for each $q\in(0,q^\circ]$ there is a $r(q)>0$ such that (\ref{stay-below-q0}) holds with $q$ in place of $q^\circ$ and $r(q)$ in place of $r^\circ$. Therefore we have
\[\|u(\cdot,\xi)-\bar{u}\|_{\infty}<q,\;\text{ for every }\;\xi\in\R^{n-1}.\]
Since this holds for each $q\in(0,q^\circ]$ we conclude
\[u(\cdot,\xi)=\bar{u},\;\text{ for every }\;\xi\in\R^{n-1}\]
which complete the proof of Theorem \ref{main-1}.

To prove Theorem \ref{main-2} we note that, if $\Omega=\{x\in\R^n: x_n>0\}$, then arguing as in the proof of Theorem \ref{main-1} above, we get that, given $q>0$ there exists $l_q>0$ such that 
\[\xi_n> l_q,\quad\Rightarrow\quad\|u(\cdot,\xi)-\bar{u}\|_{L^\infty}<q.\]
From this, the boundary condition
\[\xi_n=0,\quad\Rightarrow\quad\|u(\cdot,\xi)-\bar{u}\|_{L^\infty}=0,\]
and the reasoning in the proof of Lemma \ref{lemma-w-q} it follows
\[\|u(\cdot,\xi)-\bar{u}\|_{L^\infty}<q,\;\text{ for each }\;\xi_n\geq 0,\,q>0.\]
The proof of Theorem \ref{main-2} is complete.
\section{The proof of Theorem \ref{triple}}\label{sec-triple}
From an abstract point of view the proof of Theorem \ref{triple} is essentially the same as the proof of Theorem \ref{main-1} after quantities like $q^u$ and $\nu^u$ are reinterpreted and properly redefined in the context of maps equivariant with respect to the group $G$ of the equilateral triangle. We divide the proof in steps pointing out the correspondence with the corresponding steps in the proof of Theorem \ref{main-1}. We write $x\in\R^n$ in the form $x=(s,\xi)$ with $s=(s_1,s_2)\in\R^2$ and $\xi=(x_2,\ldots,x_n)\in\R^{n-2}$.
\begin{description}
\item[Step 1]
\end{description}
From assumption (\ref{stay-away}) in Theorem \ref{triple} and equivariance it follows
\begin{equation}\label{stay-away-1}
\begin{split}
& \vert u(x)-a\vert\geq\delta,\;\vert u(x)-g_-a\vert>\delta,\;\text{ for }\;x\in g_+D,\;d(x,\partial g_+D)\geq d_0,\\
& \vert u(x)-a\vert\geq\delta,\;\vert u(x)-g_+a\vert>\delta,\;\text{ for }\;x\in g_-D,\;d(x,\partial g_-D)\geq d_0.
\end{split}
\end{equation}
From this and assumptions ${\bf H}^\prime_3$ and ${\bf H}^\prime_4$ it follows that we can apply Theorem \ref{main} with $\Omega=\R^n\setminus\overline{D}$ and $a_\pm=g_\pm a$ to conclude that there exist $k, K>0$ such that
\begin{equation}\label{exp-est-t}
\vert u(s_1,s_2,\xi)-\bar{u}(s_2)\vert\leq K e^{-k d(x,\partial(\R^n\setminus\overline{D}))},\;x\in\R^n\setminus\overline{D}.
\end{equation}
In exactly the same way we establish that
\begin{equation}\label{exp-est-2}
\vert \tilde{u}(s_1,s_2)-\bar{u}(s_2)\vert\leq K e^{-k d(s,\partial(\R^2\setminus\overline{D_2}))},\;s\in\R^2\setminus\overline{D_2},
\end{equation}
where $D_2\subset\R^2=\{s:\vert s_2\vert<\sqrt{3}s_1,\;s_1>0\}$.
From (\ref{exp-est-t}), (\ref{exp-est-2}) and equivariance it follows
\begin{equation}\label{exp-est-3}
\vert u(s,\xi)- \tilde{u}(s)\vert\leq K e^{-k\vert s\vert},\;\text{ for }\;s\in\R^2,\;\xi\in\R^{n-2}.
\end{equation}
\begin{description}
\item[Step 2]
\end{description}
Let $C_G^{0,1}(\R^n;\R^m)$ the set of  lipshizt maps $v:\R^n\rightarrow\R^m$ which are equivariant under $G$ and satisfy
\begin{equation}\label{exp-est-4}
\begin{split}
&\vert v(s,\xi)- \tilde{u}(s)\vert\leq K e^{-k\vert s\vert},\\
&\vert\nabla_s v(s,\xi)-\nabla_s\tilde{u}(s)\vert\leq K e^{-k\vert s\vert},\\
&\vert\nabla_\xi v(s,\xi)\vert\leq K e^{-k\vert s\vert},
\end{split}\;\text{ for }\;s\in\R^2,\;\xi\in\R^{n-2},
\end{equation}
We remark that from (\ref{exp-est-3}) we have $u\in C_G^{0,1}(\R^n;\R^m)$ for the minimizer $u$ in Theorem \ref{triple}. If $O\subset\R^{n-2}$ is an open bounded set with a lipshitz boundary we let $C_G^{0,1}(\R^2\times O;\R^m)$ the set of equivariant maps that satisfy (\ref{exp-est-4}) for $\xi\in O$. We denote $C_{0,G}^{0,1}(\R^2\times O;\R^m)$ the subset of $C_G^{0,1}(\R^2\times O;\R^m)$ of the maps the vanish on the boundary of $\R^2\times O$. The spaces $W_G^{1,2}(\R^2\times O;\R^m)$ and $W_{0,G}^{1,2}(\R^2\times O;\R^m)$ are defined in the obvious way. The exponential estimates in the definition of these function spaces and the same argument in the proof of Lemma \ref{l-infty} imply
\begin{lemma}\label{l-infty-t}
Let $u:\R^n\rightarrow\R^m$ the $G$-equivariant minimizer in Theorem \ref{triple}. Given an open bounded lipshitz set $O\subset\R^{n-2}$ we have
\begin{equation}\label{min-infty-t}
J_{\R^2\times O}(u)=\min_{v\in u+W_{0,G}^{1,2}(\R^2\times O;\R^m)}J_{\R^2\times O}(v),
\end{equation}
 \end{lemma}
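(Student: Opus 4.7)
The plan is to mimic directly the argument used for Lemma \ref{l-infty}, but with the one-dimensional interpolation strip $[l,l+1]$ replaced by a two-dimensional annulus $B_{l+1}\setminus B_l\subset\R^2$ (where $B_r$ denotes the disk of radius $r$ centered at the origin), and with the reference one-dimensional solution $\bar u$ replaced by the two-dimensional triple-junction $\tilde u$ provided by hypothesis ${\bf H}_5$. Suppose, for contradiction, that there exist $\eta>0$ and $v\in u+W_{0,G}^{1,2}(\R^2\times O;\R^m)$ with $J_{\R^2\times O}(u)-J_{\R^2\times O}(v)\geq\eta$. For each $l>0$ I would define the competitor
\begin{eqnarray*}
\tilde v(s,\xi)=\left\{\begin{array}{l} v(s,\xi),\quad |s|\leq l,\\
(l+1-|s|)v(s,\xi)+(|s|-l)u(s,\xi),\quad l\leq |s|\leq l+1,\\
u(s,\xi),\quad |s|\geq l+1.
\end{array}\right.
\end{eqnarray*}
Since the cutoff depends only on the $G$-invariant quantity $|s|$ and both $u$ and $v$ are $G$-equivariant, $\tilde v$ is $G$-equivariant; moreover $\tilde v-u\in W_{0,G}^{1,2}(B_{l+1}\times O;\R^m)$, so minimality of $u$ on the bounded symmetric Lipschitz set $B_{l+1}\times O$ gives $J_{B_{l+1}\times O}(u)\leq J_{B_{l+1}\times O}(\tilde v)$.

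The heart of the matter is to estimate the contributions coming from the annular collar $(B_{l+1}\setminus B_l)\times O$ and from the exterior $(\R^2\setminus B_l)\times O$. Here the definition of $C_G^{0,1}$ in (\ref{exp-est-4}) is crucial: both $u$ and $v$, together with their derivatives (with the understanding that for the $\xi$-derivatives we compare with $0$), differ from $\tilde u(s)$ by $O(e^{-k|s|})$. Consequently $\tilde v$ and $\nabla\tilde v$ also differ from $\tilde u$ and $\nabla\tilde u$ by $O(e^{-k|s|})$ uniformly on the annulus. Expanding the energy density, one gets
\[\bigl|\tfrac{1}{2}|\nabla\tilde v|^2+W(\tilde v)-\tfrac{1}{2}|\nabla\tilde u|^2-W(\tilde u)\bigr|\leq C(1+|\nabla\tilde u|)e^{-kl},\]
and the same bound with $u$ in place of $\tilde v$. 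Since $|B_{l+1}\setminus B_l|=O(l)$ and Cauchy-Schwarz combined with the finiteness of the energy of $\tilde u$ on any disc gives $\int_{B_{l+1}\setminus B_l}|\nabla\tilde u|\leq C\sqrt{l}$, one obtains
\[J_{(B_{l+1}\setminus B_l)\times O}(\tilde v)-J_{(B_{l+1}\setminus B_l)\times O}(u)=O(\sqrt{l}\,e^{-kl}),\]
and analogously $J_{(\R^2\setminus B_l)\times O}(u)-J_{(\R^2\setminus B_l)\times O}(v)=O(\sqrt{l}\,e^{-kl})$.

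Combining these estimates with the minimality inequality $J_{B_{l+1}\times O}(u)\leq J_{B_{l+1}\times O}(\tilde v)$ and the splitting $J_{B_{l+1}\times O}=J_{B_l\times O}+J_{(B_{l+1}\setminus B_l)\times O}$ (noting that $\tilde v=v$ on $B_l\times O$) yields $J_{B_l\times O}(u)-J_{B_l\times O}(v)\leq O(\sqrt{l}\,e^{-kl})$. Adding the tail estimate gives $J_{\R^2\times O}(u)-J_{\R^2\times O}(v)\leq O(\sqrt{l}\,e^{-kl})$, which for $l$ large contradicts the assumption $J_{\R^2\times O}(u)-J_{\R^2\times O}(v)\geq\eta$. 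The main obstacle, compared to the one-dimensional Lemma \ref{l-infty}, is that the area of the interpolation region grows linearly and the energy of $\tilde u$ on it does \emph{not} decay exponentially (the triple-junction walls carry $O(1)$ energy density on the annulus); this is handled by comparing energies not directly, but through the reference map $\tilde u$, so that only the \emph{differences} of energy densities—which are exponentially small by (\ref{exp-est-4})—enter the estimate.
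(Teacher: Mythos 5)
Your proposal is correct and follows essentially the same approach the paper intends: the paper proves Lemma \ref{l-infty-t} by invoking ``the exponential estimates in the definition of these function spaces and the same argument in the proof of Lemma \ref{l-infty}'', which is exactly your adaptation — interpolating between $v$ and $u$ on a growing annulus $B_{l+1}\setminus B_l$, using that both agree with $\tilde u$ up to $O(e^{-k|s|})$ so that only energy-density \emph{differences} (which are exponentially small) contribute on the collar and in the tail. Your observation that the triple-junction walls carry non-decaying energy on the annulus, and that this is immaterial because one compares through the reference $\tilde u$, is precisely the point that makes the one-dimensional argument carry over.
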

\begin{description}
\item[Step 3]
\end{description}
 In analogy with the definition of  ${\bf e}(v)$ in Lemma \ref{strict-minimizer}, for $v\in W_G^{1,2}(\R^n;\R^m)$, we define the {\it effective} potential ${\bf E}(v)$ for the case at hand. We set
\begin{equation}\label{energy-t}
{\bf E}(v)=\frac{1}{2}(\langle\nabla_s\tilde{u}+\nabla_s v,\nabla_s\tilde{u}+\nabla_s v\rangle-\langle\nabla_s\tilde{u},\nabla_s\tilde{u}\rangle)+\int_{\R^2}(W(\tilde{u}+v)-W(\tilde{u}))ds,\;\xi\in\R^{n-2}.
\end{equation}
With this definition we can represent the energy $J_{\R^2\times O}(v)$ of a generic map $v\in W_G^{1,2}(\R^2\times O;\R^m)$ in the {\it polar} form
\begin{equation}\label{polar-form-t}
J_{\R^2\times O}(v)=\int_O\frac{1}{2}\big((\vert\nabla_\xi q^v\vert^2+(q^v)^2\sum_j\langle\nu_{\xi_j}^v,\nu_{\xi_j}^v\rangle)+
{\bf E}(q^v\nu^v)\big)d\xi,
\end{equation}
where $\langle,\rangle$ denotes the standard inner product in $L^2(\R^2;\R^m)$ and $q^v$ and $\nu^v$ are defined by
\begin{equation}\label{qn-nuv-t}
\begin{split}
& q^v(\xi)=\| v(\cdot,\xi)-\tilde{u}\|_{L^2(\R^2;\R^m)},\;\text{ for }\;\xi\in O\\
&\nu^v(s,\xi)=\frac{v(s,\xi)-\tilde{u}(s)}{q^v(\xi)},\;\text{ if }\;q^v(\xi)>0.
\end{split}
\end{equation}
From and assumptions ${\bf H}^\prime_5$ and ${\bf H}^\prime_5$, arguing exactly as in the proof of Lemma \ref{strict-minimizer} we prove
\begin{lemma}\label{strict-minimizer-t}
${\bf H}^\prime_5$ and ${\bf H}^\prime_5$.
Then there exist $q^\circ >0 \text{ and } c>0$ such that
\begin{eqnarray}\label{iota-properties-t}
\left\{\begin{array}{l}
D_{qq}{\bf E}(q\nu)\geq c^2,\quad \text{ for } q\in[0,q^\circ]\cap[0,q_\nu],\;\nu\in\SF,\\\\
{\bf E}(q\nu)\geq{\bf E}(q^\circ\nu),\;\, \text{ for }  q^\circ\leq q\leq q_\nu,\;\nu\in\SF,\\\\
 {\bf E}(q\nu)\geq \tilde{{\bf E}}(p,q,\nu):={\bf E}(p\nu)+D_q{\bf E}(p\nu)(q-p)
 ,\\\quad\hskip4cm \text{ for } 0\leq p<q\leq q_\nu\leq q^\circ,\;\nu\in\SF,\\\\
D_p\tilde{{\bf E}}(p,q,\nu)\geq 0 ,\quad \text{ for } 0\leq p<q\leq q_\nu\leq q^\circ,\;\nu\in\SF.
\end{array}\right.
\end{eqnarray}
\end{lemma}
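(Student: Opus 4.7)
The plan is to reproduce the proof of Lemma \ref{strict-minimizer} essentially verbatim, with the following dictionary: the one-dimensional target $\bar u$ is replaced by the two-dimensional equivariant connection $\tilde u$, the interval $[-l,l]$ by $\R^2$, the derivative $v_s$ by $\nabla_s v$, the potential ${\bf e}_l$ by ${\bf E}$, the operator $T=-\partial_s^2+W_{uu}(\bar u)$ by $\mathcal{T}=-\Delta+W_{uu}(\tilde u)$ on $W_G^{2,2}(\R^2,\R^m)$, and the assumptions ${\bf H}_3,{\bf H}_4$ by ${\bf H}_5,{\bf H}_6$. The ball $\mathcal{B}_l^{1,2}$ is replaced by an analogous Sobolev ball $\mathcal{B}\subset W_G^{1,2}(\R^2,\R^m)$ whose radius comes from the exponential estimates in the definition of $C_G^{0,1}$.

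First I would differentiate ${\bf E}(q\nu)$ twice to obtain
\begin{equation*}
D_{qq}{\bf E}(q\nu)=\int_{\R^2}|\nabla_s\nu|^2\,ds+\int_{\R^2}W_{uu}(\tilde u+q\nu)(\nu,\nu)\,ds,
\end{equation*}
which at $q=0$ equals $\langle\mathcal{T}\nu,\nu\rangle$. The operator $\mathcal{T}$ is self-adjoint and non-negative on $W_G^{2,2}(\R^2,\R^m)$ because $\tilde u$ is a $G$-equivariant minimizer; by ${\bf H}_6$ its kernel is trivial, so the point spectrum is bounded below by a positive constant. For the essential spectrum I would invoke Persson's theorem as in Lemma \ref{strict-minimizer}: by the exponential bound (\ref{exp-est-two}) and $T$-equivariance, $W_{uu}(\tilde u(s))$ is asymptotically close to $W_{uu}(ga)$ on each chamber $gF$, and from $W(gu)=W(u)$ one gets $W_{uu}(ga)=g^\top W_{uu}(a)g$, so all these matrices have the same spectrum; by ${\bf H}_2$ the smallest eigenvalue $\mu$ of $W_{uu}(a)$ is positive, hence the essential spectrum of $\mathcal{T}$ is bounded below by $\mu$. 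Together these give $\mathcal{T}\geq\tilde\mu>0$ on the equivariant subspace.

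To control the perturbation
\begin{equation*}
\Bigl|\int_{\R^2}\bigl(W_{uu}(\tilde u+q\nu)-W_{uu}(\tilde u)\bigr)(\nu,\nu)\,ds\Bigr|\leq C_1 q^\alpha,
\end{equation*}
an interpolation argument replaces the one used in Lemma \ref{strict-minimizer}; since Sobolev embedding in dimension two does not give an $L^\infty$ bound, I would instead expand the Hessian difference to first order, write the resulting integral as $q$ times $\int|\nu|^3$-type terms, and bound these by Cauchy-Schwarz using $\|\nu\|_{L^2}=1$ together with the $W^{1,2}$ bound on $q\nu$ built into $\mathcal{B}$. This gives (\ref{iota-properties-t})$_1$ with $c^2=\tilde\mu/2$ for $q$ in a small interval $[0,\bar q]$. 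The second inequality then follows exactly as in Lemma \ref{strict-minimizer} by setting up the constrained minimization $\min\{{\bf E}(v):v\in\mathcal B,\;\|v\|\geq\bar q\}$, extracting a weakly convergent minimizing sequence, and using the uniqueness statement in ${\bf H}_5$ to rule out the vanishing of the infimum. The last two inequalities are immediate convexity consequences of the first.

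The main obstacle will be the essential-spectrum analysis on the equivariant subspace: $\tilde u$ does not approach a single constant at infinity but rather the full $T$-orbit of $a$, and near the walls of $D$ it is asymptotic to a solution $\bar u$ of the one-dimensional profile problem, so one must check that Persson's argument still delivers a uniform positive lower bound. The exponential decay (\ref{exp-est-two}) across the walls and equivariance combine to yield this, but the book-keeping is heavier than in the one-dimensional case of Lemma \ref{strict-minimizer}; the 2D interpolation used to absorb the cubic perturbation is the second technical point requiring care.
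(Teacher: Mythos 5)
Your high-level plan — translate the proof of Lemma \ref{strict-minimizer} with the dictionary $\bar u\mapsto\tilde u$, $[-l,l]\mapsto\R^2$, ${\bf e}_l\mapsto{\bf E}$, $T\mapsto\mathcal{T}$, ${\bf H}_3,{\bf H}_4\mapsto{\bf H}_5,{\bf H}_6$ — is precisely what the paper does (the paper's own proof consists of the single phrase ``arguing exactly as in the proof of Lemma \ref{strict-minimizer}''). You are also right to single out the essential-spectrum analysis and the absorption of the cubic term as the two places where the translation is not automatic. However, the resolution you propose for the cubic term is wrong, and this is a genuine gap.

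With $\mathcal{B}$ taken to be a $W_G^{1,2}(\R^2,\R^m)$ ball of radius $C$, Cauchy--Schwarz and Gagliardo--Nirenberg do not make the perturbation small. Expanding $W_{uu}$ to first order, the quantity to control is
\[
\Bigl|\int_{\R^2}\bigl(W_{uu}(\tilde u+q\nu)-W_{uu}(\tilde u)\bigr)(\nu,\nu)\,ds\Bigr|\;\lesssim\; q\int_{\R^2}|\nu|^3\,ds,
\]
and the best two-dimensional interpolation available from $\|\nu\|_{L^2}=1$ and $\|q\nu\|_{W^{1,2}}\le C$ is $\|\nu\|_{L^3}^3\lesssim\|\nu\|_{L^2}^2\|\nabla\nu\|_{L^2}=\|\nabla\nu\|_{L^2}\le C/q$, which gives $q\int|\nu|^3\lesssim C$, an $O(1)$ bound rather than the $O(q^{1/2})$ bound of (\ref{int-wuu-wuu-estimate}). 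This is not an accident of the exponent chosen: $W^{1,2}(\R^2)$ does not embed into $L^\infty$, there is no two-dimensional analogue of the inequality $\|v\|_{L^\infty}\le\sqrt 2\|v\|_{1,l}^{1/2}\|v\|_l^{1/2}$ used in Lemma \ref{strict-minimizer}, and a $W^{1,2}$ bound on $q\nu$ alone cannot force the Hessian difference to be small.

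The correct substitute is not a Sobolev interpolation at all but the \emph{Lipschitz} interpolation that is already in the paper as Lemma \ref{l-infinity-less-l-2}, whose hypotheses carry over verbatim. Admissible perturbations $v=q\nu$ inherit from (\ref{exp-est-3})--(\ref{exp-est-4}) the pointwise bound $|v|+|\nabla_s v|\le 2Ke^{-k|s|}$, hence a uniform Lipschitz constant $2K$; if $m=\|v\|_{L^\infty}$ is attained at $\bar s$ then $|v|\ge m/2$ on the disc of radius $m/(4K)$ about $\bar s$, so $\|v\|_{L^2}^2\gtrsim m^4/K^2$ and therefore
\[
\|v\|_{L^\infty(\R^2;\R^m)}\le C'\,\|v\|_{L^2(\R^2;\R^m)}^{1/2}
\]
(the exponent $2/3$ of the one-dimensional Lemma \ref{l-infinity-less-l-2} becomes $1/2$ in two dimensions). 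With this, $\|\nu\|_{L^\infty}\le C'q^{-1/2}$, the Hessian difference is $\le\overline{W}'''\,C'\,q^{1/2}$ pointwise, and the perturbation is $O(q^{1/2})$, exactly parallel to (\ref{w-uu-estimate})--(\ref{int-wuu-wuu-estimate}). So the ball $\mathcal{B}$ must be defined so as to retain the pointwise exponential (and in particular Lipschitz) bounds of $C_G^{0,1}$, not merely a $W^{1,2}$ bound; a $W^{1,2}$ ball is strictly too large for the argument to close. Once this is replaced, the rest of your outline — the Persson lower bound on the essential spectrum, the constrained minimization for (\ref{iota-properties-t})$_2$, and the convexity consequences for (\ref{iota-properties-t})$_3$ and (\ref{iota-properties-t})$_4$ — proceeds as you describe.
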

\begin{description}
\item[Step 4]
\end{description}
Based on this lemma and on the polar representation of the energy (\ref{polar-form-t}) we can follow step by step  the arguments in Sec. 2 to establish the analogous of Proposition \ref{l-2-bound}. Actually the argument simplifies since by Lemma \ref{l-infty-t} we can work directly in $\R^2\times O$ rather then in bounded cylinders as in Sec. 2. For example the analogous of Lemma \ref{lemma-1} is not needed. In conclusion, by arguing as in Sec .2, we prove that, given $q\in(0,q^\circ]$, there is $r(q)>0$ such that
\begin{equation}\label{t-t}
\R^2\times B_{r(q)}(\xi)\subset\R^n\;\;\Rightarrow\;\;q^u(\xi)=\|u(\cdot,\xi)\tilde{u}\|_{L^2(\R^2;\R^m)}<q,
\end{equation}
where $B_{r(q)}(\xi)\subset\R^{n-2}$ is the ball of center $\xi$ and radius $r(q)$. Since the condition on the l.h.s. of (\ref{t-t}) is trivially satisfied for all $\xi\in\R^{n-2}$ and for all $q\in(0,q^\circ]$ we have
\[u(s,\xi)=\tilde{u}(s),\;\text{ for }\;s\in\R^2,\;\xi\in\R^{n-2}\]
which concludes the proof.
\bibliographystyle{plain}

\begin{thebibliography}{99}

\bibitem{ag}
S.~Agmon.
\newblock {\em Lectures on exponential decay of solutions of second order elliptic equations.}
 \newblock { Math. \ Notes} {Princeton University} No.~29 (1982).
\bibitem{aac}
\newblock G.~Alberti, L.~Ambrosio, X.~Cabr\'e.
\newblock On a long-standing conjecture of E. De
Giorgi: simmetry in 3D for general non linearities and a local minimality
property.
\newblock {\em Acta\ Appl.\ Math. } {\bf 65} (2001), pp.~9--33.

\bibitem{a}
N.~D.~Alikakos.
\newblock  Some basic facts on the system $\Delta u-W_u(u)=0$.
\newblock {\em Proc.\ Amer.\ Math.\ Soc.} {\bf 139} No.~1 (2011), pp.~153--162.

\bibitem{a1}
N.~D.~Alikakos.
\newblock  A new proof for the existence of an equivariant entire solution connecting the minima of the potential for the system $\Delta u-W_u(u)=0$.
\newblock {\em Comm.\ Partial\ Diff.\ Eqs.} {\bf 37} No.~12 (2012), pp.~2093�-2115.

\bibitem{a2}
N.~D.~Alikakos.
\newblock On the structure of phase transition maps for three or more coexisting phases.
\newblock In {\em  Geometric partial differential equations, {\rm M. Novaga and G. Orlandi} eds. Publications  Scuola Normale Superiore}, CRM Series, Birkh�user, (2013).

\bibitem{af1}
N.~D.~Alikakos and G.~Fusco.
\newblock Entire solutions to equivariant elliptic systems with variational structure.
\newblock {\em Arch.\ Rat.\ Mech.\ Analysis} {\bf 202} No.~2 (2011), pp.~567--597.


\bibitem{af2}
N.~D.~Alikakos and G.~Fusco.
\newblock 
\newblock {\em in preparation} 


\bibitem{af3}
N.~D.~Alikakos and G.~Fusco.
\newblock A symmetry result for the vector Allen-Cahn equation in $\R^2$.
\newblock {\em in preparation} 

\bibitem{bar}
M.~T.~Barlow, R.~F.~Bass and C.~Gui.
\newblock The Liouville property and a conjecture of De
Giorgi.
\newblock {\em Comm.\ Pure \ Appl. \ Math. } {\bf 53} No.~8 (2000), pp.~1007--1038.

\bibitem{bcn}
H.~Berestycki, L.~Caffarelli and L.~Nirenberg.
\newblock Monotonicity for elliptic equations in unbounded Lipshitz domains.
\newblock {\em Comm.\ Pure\\Appl.\ Math.} {\bf 50} No.~2 (1997), pp.~1089--1111.


\bibitem{bcn1}
H.~Berestycki, L.~Caffarelli and L.~Nirenberg.
\newblock Symmetry for elliptic equations in a half space.
\newblock {\em In: Boundary Value Problems for Partial Differential Equations and Applications}, {J. L. Lions et al. (eds.)}
\newblock { RMA \ Res.\ Notes\ Appl. \ Math.} {Masson, Paris} No.~ (1993),  pp.~27--42.

\bibitem{bgs}
L.~Bronsard, C.~Gui, and M.~Schatzman.
\newblock A three-layered minimizer in $\R^2$ for a var\-i\-ational problem with a symmetric three-well potential.
\newblock {\em Comm.\ Pure.\ Appl.\ Math.} {\bf 49} No.~7 (1996), pp.~677--715.

\bibitem{eh}
M.~Efendiev and F.~Hamel.
\newblock Asymptotic behavior of solutions of semilinear elliptic equations in unbounded domains: two approaches.
\newblock {\em Adv.\ Math.} {\bf 228} (2011), pp.~1237--1261.

\bibitem{jm}
D.~Jerison and R.~Monneau. 
\newblock The existence of a symmetric global minimizer on
$R^{n-1}$ implies the existence of a counter-example to a conjecture of De Giorgi
in $R^n$. 
\newblock {\em C.\ R.\ Acad.\ Sci. \ Paris\ Sr. I \ Math.} {\bf 333} No.~5 (2001), pp.~427�-431.


\bibitem{far}
A.~Farina.
\newblock  Symmetry for solutions of semilinear elliptic equations in $\R^N$ and
related conjectures.
\newblock {\em Rend.\ Mat.\ Acc.\ Lincei} {\bf 9} No.~10 (1999), pp.~255--265.


\bibitem{fg}
M.~Fazly and N.~Ghoussoub.
\newblock  De Giorgi type results for elliptic systems.
\newblock In {\em Calculus of Variations and Partial Differential Equations} {\bf 47},  No.~3-4 (2013),  pp.~809--823.



\bibitem{f}
 G.~Fusco.
\newblock Equivariant entire solutions to the elliptic system $\Delta u=W_u(u)$ for general $G-$invariant potentials.
\newblock  {\em Calc.\ Var.\ Part.\ Diff.\ Eqs.} February (2013), pp.~1--23.


\bibitem{fu}
G.~Fusco.
\newblock On some elementary properties of vector minimizers of the Allen-Cahn energy.
\newblock  {\em \ Comm.\ Pure \ Appl. \ Analysis }. {\bf } (2013), To appear.

\bibitem{flp}
G.~Fusco, F.~Leonetti and C.~Pignotti.
\newblock A uniform estimate for positive solutions of semilinear elliptic equations.
\newblock {\em Trans.\ Amer.\ Math.\ Soc.} {\bf 363} No.~8 (2011), pp.~4285--4307.

\bibitem{gnn}
B.~Gidas, W.~N.~Ni and L.~Nirenberg.
\newblock Symmetry and related properties via the maximum principle.
\newblock {\em Comm.\ Math.\ Phys.} {\bf 68} No.~ (1979), pp.~209--243.


\bibitem{gs}
C.~Gui and M.~Schatzman.
\newblock Symmetric quadruple phase transitions.
\newblock {\em Ind.\ Univ.\ Math.\ J.} {\bf 57} No.~2 (2008), pp.~781--836.



\bibitem{r}
W.~Rudin
\newblock {\em Functional Analysis.}
\newblock  McGraw-Hill Series in Higher Math.\, (1973).

\bibitem{s}
O.~Savin.
\newblock Minimal Surface and Minimizers of the Ginzburg-Landau Energy.
\newblock {\em Contemporary\ Math.\ AMS} {\bf 528} (2010), pp.~43--58.

\bibitem{gmt}
B.~White
\newblock {\em Topics in GMT.}
\newblock  Notes by O.Chodash.\, Stanford \,(2012).
\end{thebibliography}

\vskip.2cm
Department of Mathematics, University of Athens, Panepistemiopolis, 15784 Athens, Greece; e-mail: {\texttt{nalikako@math.uoa.gr}}
\vskip.2cm
\noindent Universit\`a degli Studi dell'Aquila, Via Vetoio, 67010 Coppito, L'Aquila, Italy; e-mail:{\texttt{fusco@univaq.it}}
\end{document}